\newtheorem{theorem}{Theorem}[section]
\newtheorem{lemma}[theorem]{Lemma}
\newtheorem{prop}[theorem]{Proposition}
\theoremstyle{definition}
\theoremstyle{remark}
\numberwithin{equation}{section}
\newcommand{\BR}{{\mathbb R}}
\newcommand{\conv}{\mathrm{conv}}
\newcommand{\vol}{\mathrm{vol}}
\begin{document}

\title{Recovering Lexicographic Triangulations}


\author{Carl W. Lee}
\address{University of Kentucky}
\curraddr{}
\email{lee@uky.edu}
\thanks{}

\author{Wendy Weber}
\address{Central College, Pella, IA}
\curraddr{}
\email{weberw@central.edu}
\thanks{}

\subjclass[2010]{52B70}

\date{}

\dedicatory{}


\begin{abstract}
Given a finite set $V=\{v^1, \dots, v^n\} \subset \mathbb R^d$ with $\dim(\conv(V))=d$, a triangulation $T$ of $V$ is a collection of distinct subsets $\{T_1, \dots, T_m\}$ where $T_i \subseteq V$ is the vertex set of a $d$-simplex, $\mathrm{conv} (V)=\bigcup_{i=1}^m \mathrm{conv} (T_i)$, and $T_i \cap T_j$ is a common (possibly empty) face of both $T_i$ and $T_j$.  Associated with each triangulation $T$ of $V$ is the GKZ-vector $\phi(T)=(z_1, \dots, z_n)$ where $z_i$ is the sum of the volumes of all $d$-simplices of $T$ having $v^i \in V$ as a vertex.  It is clear that given $V$ and a triangulation $T$ we can find $\phi(T)$.   The focus of this paper is recovering a lexicographic triangulation from its GKZ-vector.  

The motivation for studying triangulations and their GKZ-vectors arises from  the work of Gel'fand, Kapranov, and Zelevinski\v{\i} \cite {G86,GZK90,GZK91,GKZ} in which they illuminate connections between regular triangulations and subdivisions of Newton polytopes, and  generalized discriminants and determinants.

The {\it secondary polytope}, $\Sigma (V)$, of an arbitrary  finite point set $V \subset \mathbb R^d$, introduced by Gel'fand, Kapranov, and Zelevinski\v{\i}, is defined to be the convex hull of the GKZ-vectors of all triangulations of $V$.   They showed  the vertices of $\Sigma (V)$ are in one-to-one correspondence with the {\it regular} triangulations of $V$.  Since the GKZ-vector of a  regular triangulation is uniquely associated with that triangulation, a natural question is how that triangulation can be recovered from its vector.  We answer this question in the case that the associated triangulation is lexicographic. 

\end{abstract}

\maketitle


\begin{sloppypar}

\section{General Definitions }

\label{chapter-pre}

Let $V=\{v^1, \dots, v^{n}\}$ be a finite set of points in ${\mathbb R}^d$ such that the convex hull of $V$, ${\mathrm{conv}}(V)$, is a $d$-dimensional (convex) polytope; we say $\dim ({\mathrm{conv}} (V) )=d$.   We also assume for convenience that no points of $V$ are included with multiplicity greater than one, though it is not difficult to modify the results of this paper to allow this possibility.   For $S\subseteq\BR^d$ we denote by ${\mathrm{aff}}
(S)$ the affine span of $S$, the intersection of all affine sets containing $S$.  For $a \in {\mathbb R}^d$, $a\neq O$, and $\alpha \in {\mathbb R}$, a  {\it hyperplane}, $H$, is a set of the form $H=\{x: a^Tx=\alpha\}$.  We say  {\it $H$ is a supporting hyperplane to the set $S$} if $a^Tx \leq \alpha$, for every $x \in S$, and $H \cap S \neq \emptyset$, $S \nsubseteq H$. Given a finite set $S$ of points, a subset $T\subseteq S$ is a {\it face of $S$} if $T=\emptyset$ or $T=S \cap H$ where $H$ is a supporting hyperplane to $S$.  In this case we call $T$ a {\it facet\/} of $S$ if $\dim(\conv(T))=\dim(\conv(S))-1$, a {\it subfacet\/} of $S$ if $\dim(\conv(T))=\dim(\conv(S))-2$, and a  {\it vertex\/} of $S$ if $T$ is a single point.

A collection of subsets $S=\{S_1, \dots, S_m\}$ of $V$, where ${\mathrm{conv}}(V)$ and ${\mathrm{conv}}(S_i)$, for all $i$, are $d$-dimensional, is a {\it subdivision} of $V$ if (i) ${\mathrm{conv}}(V)=\bigcup_{i=1}^m {\mathrm{conv}}(S_i)$, and  (ii) $S_i \cap S_j$ is a common (possibly empty) face of both $S_i$ and $S_j$ for all $i \neq  j$.  If the only set of $S$ is $V$ itself, then $S$ is the {\it trivial subdivision}.  Suppose $S=\{S_1, \dots, S_m\}$ and $T=\{T_1, \dots, T_n\}$ are subdivisions of the point set $V$.  The subdivision $T$ is a {\it refinement} of the subdivision $S$ if for every $T_i$ there is an $S_j$ such that $T_i \subseteq S_j$.  In this case, we write $T   \leq S$.  Note that by definition, if $T   \leq S$, then every set of $S$ is subdivided by particular sets of $T$.   We say  $T$ is {\it finer} than $S$ and  $S$ is {\it coarser} than $T$.  If $S \neq T$, then $T$ is a {\it proper refinement} of $S$ and we write $T  < S$. Note that every subdivision is a refinement of the trivial subdivision and the trivial subdivision is coarser than every subdivision.  A subdivision $S$ is a  {\it minimal nontrivial subdivision of $V$} if the only subdivision coarser than it is the trivial subdivision.  Suppose  $S=\{S_1, \dots, S_m\}$ is a subdivision of $V$.  Then the point {\it $v$ is present in $S$} if there is an $i$ such that $v \in S_i$. If no such $i$ exists, then we say {\it $v$ is absent from $S$}.  Note that if $v$ is absent from $S$, then $v$ will be absent from every refinement of $S$, and a refinement $S'$ of $S$ may have fewer points present; it certainly cannot have more points present than $S$ does.  

Suppose $\dim(V-\{v\})=d-1$.  Then we call the set $V$ a {\it pyramid\/} with apex $v$ and base $V-\{v\}$.  In this case it is easy to see that any subdivision $\{S_1,\ldots, S_m\}$ of $V$ is a set of pyramids of the form $\{T_1\cup\{v\},\ldots, T_m\cup\{v\}\}$, where $\{T_1,\dots,T_m\}$ is a subdivision of the base.  

If $S=\{S_1, \dots, S_m\}$ is a subdivision of $V \subset {\mathbb R}^d$, ${\mathrm{conv}}(V)$ is $d$-dimensional, and each $S_i$ has exactly $d+1$ points, the
subdivision $S$ is a {\it triangulation}.  This definition implies each of the $d$-polytopes ${\mathrm{conv}}(S_1), \dots, {\mathrm{conv}}(S_m)$ of the subdivision $S$ is a $d$-simplex.



\section{Lexicographic Subdivisions and Triangulations of Finite Point Sets}
\label{section-pre-triangulation}

Suppose $V \subset {\mathbb R}^d$ is a finite point set with ${\mathrm{conv}}(V)$ a $d$-polytope.  Let $F$ be a facet of $V$ and let $v$ be a point in ${\mathbb R}^d$.  Since $F$ is a facet of $V$, there is a unique hyperplane $H$ containing $F$.  The polytope ${\mathrm{conv}}(V)$ is contained in precisely one of the closed half spaces determined by $H$.  If $v$ is contained in the opposite open half space, then $F$ is said to be a {\it facet of $V$ visible from $v$}.  

If $V$ is a pyramid with apex $v$ and $S$ is a subdivision of $V$, or if $\dim(\conv(V-\{v\}))=d$ and $S$ is a refinement of the subdivision 
$$\{ V -\{v\}\}\cup \{F\cup \{v\}: \text{ $F$ is a facet of   $V -\{v\}$ visible from $v$} \},$$
 for some vertex $v$ of $V$, then we say  $S$ is a {\it generalized ear subdivision} of $V$.   In this case, we say  $v$ is an {\it ear point} of $S$; the collection of pyramids $E(S,v):= \{S_i :v \in S_i\}$ is  the {\it ear of $S$ given by $v$}.  Note that if $v \in {\mathrm{conv}} (V-\{v\})$ (i.e., $v$ is not a vertex of $V$), then the subdivision  $S$ is a refinement of  $\{V-\{v\}\}$ where the point $v$ is actually absent from $S$ and thus in this case we can define $E(S,v)$ to be empty.

Suppose $S=\{S_1, \dots, S_m\}$ is a subdivision of $V \subset {\mathbb R}^d$, $\dim ( {\mathrm{conv}} (V))=d$, and $v$ is present in $S$.  The result of {\it pulling the point $v$} is the subdivision $S'$ of $V$ obtained by modifying each $S_i \in S$ by the following:  (i) If $v \notin S_i$, then $S_i \in S'$. (ii)  If $v \in S_i$, then for every facet $F$ of $S_i$ not containing $v$, $F \cup \{v\} \in S'$.  The result of {\it pushing the point $v$} is the subdivision $S'$ of $V$ obtained by modifying each $S_i \in S$ by the following:  (i) If $v \notin S_i$, then $S_i \in S'$. (ii) If $v \in S_i$ and ${\mathrm{conv}} (S_i -\{v\})$ is $(d-1)$-dimensional (i.e., $S_i$ is the point  set of a pyramid with apex $v$), then $S_i \in S'$.  (iii) If $v \in S_i$ and ${\mathrm{conv}} (S_i -\{v\})$ is $d$-dimensional, then $S_i -\{v\} \in S'$.  Also, if $F$ is any facet of $S_i-\{v\}$ visible from $v$, then $F \cup \{v\} \in S'$.
Note that in both definitions the subdivision $S'$ is a refinement of $S$.  

If we start with the trivial subdivision and push a vertex $v$ of $V$, the subdivision $S$ we obtain will be a generalized ear subdivision with $v$ as an ear point of $S$.  Further, in every subdivision of $S$, the collection of sets containing $v$ will be a subdivision of $E(s,v)$.

If we start with the trivial subdivision of $V$ and push a non-vertex of $V$, the resulting subdivision will be $S=\{V-\{v\}\}$.  Hence, $v$ will be absent from every refinement of $S$.  

If we start with the trivial subdivision of $V$ and pull a point  $v \in V$ to obtain the subdivision $S$, then $v$ will be in every  $d$-dimensional set of every refinement of $S$.

It is important to note that if $S$ is a subdivision of $V$ and $V$ is a pyramid with apex $v$, then pulling or pushing $v$ will leave the subdivision unchanged.
 
Any subdivision $S$ of $V$ constructed by starting with the trivial subdivision of $V$ and pulling and/or pushing some/all the points of $V$ in some order is a {\it lexicographic subdivision} of $V$.  Pulling and/or pushing all of the points in $V$ in some order yields a lexicographic triangulation.

%
%


\section{Regular Subdivisions and Triangulations}
\label{section-pre-reg}

A subdivision $S$ of a finite point set $V \subset {\mathbb R}^d$ with $Q={\mathrm{conv}}(V)$ $d$-dimensional is {\it regular} if it arises from a polytope $P\subset {\mathbb R}^{d+1}$ in the following way:

\begin{enumerate}
\item $\pi(P)=Q$ via the projection $\pi:{\mathbb R}^{d+1} \rightarrow {\mathbb R}^d$ which deletes the last  coordinate, and 
\item the subdivision $S$ is given by the point sets of the  lower (or upper) facets of $P$ projected down  to $Q$.
\end{enumerate}

Here, a lower (respectively, upper) facet of $P$ is one with outer normal vector $(u_1,\ldots,u_{d+1})$ with $u_{d+1}<0$ (respectively, $u_{d+1}>0$).

Note that subdivisions of $V \subset {\mathbb R}^d$ obtained by lifting exactly one vertex (by a positive amount), taking the convex hull, and then projecting the lower facets, will give a generalized ear subdivision.   If $v$ is a non-vertex, then the subdivision obtained will be $S=\{ V-\{v\} \}$.  Thus, such a generalized ear subdivision is regular.  It is not hard to show that this particular subdivision is a minimally nontrivial one.

In fact, all lexicographic triangulations (and subdivisions) are regular.  In particular, if $v^1, \dots, v^n$ are pulled/pushed in that order, then the corresponding triangulation is obtained by choosing $P=\mathrm{conv}\{(v^1,\varepsilon_1),\ldots,(v^n,\varepsilon_n)\}$ with $|\varepsilon_1| \gg |\varepsilon_2| \gg \cdots \gg |\varepsilon_n| \gg 0$ where $\varepsilon_i >0$ if $v^i$ is pushed and $\varepsilon_i <0$ if $v^i$ is pulled \cite{LEE91}. 

Associated with each triangulation $T$ of $V$ is the {\it GKZ-vector\/} $\phi(T):=(z_1(T), \dots, z_n(T))$ where $z_i(T)$ is the sum of the volumes of all $d$-simplices of $T$ having $v^i \in V$ as a vertex.  The {\it secondary polytope\/} of $V$ is $\Sigma(V):=\conv(\{\phi(T):T$ is a triangulation of $V\})$.  In general, two different triangulations may have the same GKZ-vector.  However, if $T$ is a regular triangulation and $T'$ is any other triangulation, then $\phi(T)\not=\phi(T')$.  Thus regular triangulations are uniquely determined by their GKZ-vectors.  It is known that the vertices of $\Sigma(V)$ are precisely the GKZ-vectors of the regular triangulations \cite{GZK90,GZK91}.

For more details of subdivisions, triangulations, and secondary polytopes, we refer the reader to \cite{DRS10,ZIEGLER}.


\section{Comments on Ear Points}
\label{section-ear-d-diml}

In this section we discuss ear points of a subdivision $S$ of $V \subset {\mathbb R}^d$.  The first  result of this section gives an equivalent condition for ear points of triangulations; the second shows  the subdivision obtained by pushing $v$ in the trivial subdivision is a minimal nontrivial {\it regular} subdivision.

Suppose $w\in V \subset {\mathbb R}^d$ and $\conv(V-\{w\}))$ has dimension $d$.
Then the {\it shadow of $w$ in $V -\{w\}$} is the set of facets of ${\mathrm{conv}}(V-\{w\})$ visible from $w$ together with all of their faces.
Consider all subfacets of $V -\{w\}$ that are contained in exactly one facet in the shadow of $w$ in $V -\{w\}$.  The {\it shadow boundary of $w$ in $V$}, denoted $\partial(V-\{w\},w)$, is the set of all such subfacets, together with all their faces.

The proofs of the following propositions are straightforward.

\begin{prop}
Suppose $V \subset {\mathbb R}^d$ is a finite point set,  $T$ is a triangulation of $V$, and $\dim ({\mathrm{conv}} (V))=d$. Then, $w$ is an ear point of $T$ if and only if $w$ is an extreme point of $V$ and ${z}_w(T)={\mathrm{vol}}( V)- {\mathrm{vol}} (V-\{w\}).$ 
\label{thm-earpoints}
\end{prop}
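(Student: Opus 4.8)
The statement is an "if and only if" about ear points of a triangulation $T$ of $V$, and the key quantity is the GKZ-coordinate $z_w(T)$, which by definition is the sum of the volumes of all $d$-simplices of $T$ containing $w$ as a vertex. The plan is to show that $z_w(T)$ equals $\vol(V) - \vol(V-\{w\})$ exactly when the simplices of $T$ containing $w$ form an ear, i.e., their union is $\conv(V) \setminus \conv(V-\{w\})$ (up to a set of measure zero), and then to handle the extreme-point condition separately.

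First I would dispose of the extreme-point requirement. If $w$ is not an extreme point of $V$, then $\conv(V-\{w\}) = \conv(V)$, so $\vol(V) - \vol(V-\{w\}) = 0$, whereas any $d$-simplex of $T$ containing $w$ has positive volume; since every point of $V$ is a vertex of at least one simplex in a triangulation, $z_w(T) > 0$, so the volume identity fails. Also $w$ cannot be an ear point when it is not extreme (an ear point must be a vertex of $V$, as follows from the definition of generalized ear subdivision, where the ear point is a vertex $v$ of $V$ and $E(S,v)$ is taken empty precisely when $v$ is not a vertex). So both conditions fail together, and we may assume from now on that $w$ is an extreme point of $V$, hence $\conv(V-\{w\})$ is a $d$-polytope strictly contained in $\conv(V)$ and $\vol(V) - \vol(V-\{w\}) > 0$.

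Now restrict attention to $E := \{ T_i \in T : w \in T_i\}$. The union $U := \bigcup_{T_i \in E} \conv(T_i)$ is a union of $d$-simplices meeting face-to-face, so $z_w(T) = \vol(U)$. For the forward direction, suppose $w$ is an ear point of $T$. By the description of a generalized ear subdivision, the collection of sets of $T$ containing $w$ refines $\{F \cup \{w\} : F \text{ a facet of } V-\{w\} \text{ visible from } w\}$; hence $U = \bigcup \{ \conv(F \cup \{w\}) : F \text{ visible from } w\}$. This union is exactly the closure of $\conv(V) \setminus \conv(V-\{w\})$: a point of $\conv(V)$ not in $\conv(V-\{w\})$ lies, on the segment joining it to $w$, over a visible facet, and conversely each cone $\conv(F \cup \{w\})$ over a visible facet $F$ lies outside $\conv(V-\{w\})$ except on $F$ itself. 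Therefore $z_w(T) = \vol(U) = \vol(V) - \vol(V-\{w\})$.

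For the converse, suppose $w$ is extreme and $z_w(T) = \vol(V) - \vol(V-\{w\})$. I would argue that $U$ must coincide with the region $R := \overline{\conv(V)\setminus\conv(V-\{w\})}$. Clearly the simplices of $T$ not containing $w$ all lie in $\conv(V-\{w\})$, so $\conv(V-\{w\})$ is covered by them and $U$ can only overlap $\conv(V-\{w\})$ in measure zero; thus $U \subseteq R$ up to measure zero and $\vol(U) \le \vol(R) = \vol(V)-\vol(V-\{w\})$, with equality forcing $U = R$ up to measure zero. Since $U$ is a union of full-dimensional simplices and $R$ is closed with dense interior in itself, this forces $U = R$ as sets. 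It then follows that the simplices containing $w$, together with $\{V - \{w\}\}$ — no, more carefully: the simplices of $T$ partition $\conv(V)$, those not containing $w$ fill $\conv(V-\{w\})$ (since their union is $\conv(V) \setminus \mathrm{int}(U) = \conv(V-\{w\})$), and those containing $w$ fill $R$. Hence $T$ refines the subdivision $\{V-\{w\}\} \cup \{F \cup \{w\} : F \text{ visible from } w\}$ (each simplex in $U$ lies in some cone $\conv(F\cup\{w\})$ because $w$ is its apex and its opposite facet projects into a single visible facet $F$; each simplex not in $U$ lies in $\conv(V-\{w\})$), which is exactly the condition for $w$ to be an ear point.

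The main obstacle I anticipate is the converse direction — specifically, passing from the volume equality $\vol(U) = \vol(R)$ to the set equality $U = R$, and then from $U=R$ to the combinatorial conclusion that each simplex of $T$ containing $w$ sits inside a single cone over a visible facet of $V - \{w\}$. The volume-to-set step needs the face-to-face property of $T$ to rule out overlaps, and the final combinatorial step needs the observation that the "far" facet of such a simplex (the one opposite $w$) lies on $\partial(V-\{w\}, w)$ side and projects from $w$ onto the boundary complex of the visible region; this is where one uses that $T$ is a genuine triangulation and not merely a polyhedral cover. The rest is bookkeeping with volumes and the definitions of visibility and ear subdivision already set up in the excerpt.
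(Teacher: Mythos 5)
The paper does not write out a proof of this proposition, but the analogous reasoning appears (also without full detail) in the proof of the minimality lemma for pushing in Section~5, and your plan follows it: once the volume count forces $\bigcup_{T_i\ni w}\conv(T_i)=R:=\overline{\conv(V)\setminus\conv(V-\{w\})}$, the paper too jumps straight to ``generalized ear subdivision'' without spelling out the combinatorial step you flag, so the gap you identify is shared with the paper and is fillable as sketched below. The one genuine error in your write-up is the assertion that ``every point of $V$ is a vertex of at least one simplex in a triangulation, $z_w(T)>0$''; this is false, since a triangulation of $V$ need not use all of $V$ (a point can be absent, e.g.\ after pushing a non-vertex), so $z_w(T)$ can be $0$. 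Fortunately it is also unnecessary: when $w$ is not extreme the right-hand side fails trivially because the extremality clause fails, and the left-hand side fails because an ear point is by definition a vertex of $V$.

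To close the gap in the converse: each $T_i\ni w$ has $\conv(T_i)\subseteq R$ (and your appeal to the face-to-face condition is exactly what yields this from the measure-theoretic equality), hence $\conv(T_i-\{w\})\subseteq\conv(V-\{w\})\cap R$, which is contained in the boundary of the $d$-polytope $\conv(V-\{w\})$. A convex $(d-1)$-dimensional subset of the boundary of a $d$-polytope lies in a single facet $F$, so $T_i-\{w\}\subseteq(V-\{w\})\cap{\mathrm{aff}}(F)=F$; and $F$ must be visible from $w$, since a non-visible facet of $\conv(V-\{w\})$ is also a facet of $\conv(V)$, whose relative interior misses $R$, while the relative interior of $\conv(T_i-\{w\})$ does not. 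So the far facet does not need to be ``projected'' onto the visible region --- it already sits in a single visible facet. Finally, you should handle separately the degenerate case $\vol(\conv(V-\{w\}))=0$ (i.e.\ $V$ a pyramid with apex $w$): there $w$ is an ear point by definition and $z_w(T)=\vol(V)$ automatically, but your description in terms of cones over visible facets of $V-\{w\}$ does not literally apply.
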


\begin{prop}
Suppose $V \subset {\mathbb R}^d$, $\dim ({\mathrm{conv}} (V))=d$, $w \notin {\mathrm{conv}} (V-\{w\})$, and  $\dim({\mathrm{conv}} (V - \{w\}))=d$.  If $S$ is the subdivision of $V$ resulting from pushing $w$ in the trivial subdivision, then $S$ is a minimal nontrivial regular subdivision of $V$.
\end{prop}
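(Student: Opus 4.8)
The plan is to verify the two assertions separately: that $S$ is regular, and that the only subdivision strictly coarser than $S$ is the trivial one $\{V\}$.

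Regularity is essentially immediate. Since $S$ is obtained from the trivial subdivision by pushing the single point $w$, it is a lexicographic subdivision of $V$, and as noted in Section~\ref{section-pre-reg} (cf.~\cite{LEE91}) every lexicographic subdivision is regular. Alternatively one can exhibit the lift directly: for any $\varepsilon>0$ set $P=\conv\big(\{(v,0):v\in V-\{w\}\}\cup\{(w,\varepsilon)\}\big)\subset\BR^{d+1}$. Because $w\notin\conv(V-\{w\})$, the lower envelope of $P$ is flat at height $0$ above $\conv(V-\{w\})$, contributing the cell $V-\{w\}$, while above each region $\conv(F\cup\{w\})$ determined by a facet $F$ of $V-\{w\}$ visible from $w$ the lower envelope is the single facet spanned by $\{(v,0):v\in F\}\cup\{(w,\varepsilon)\}$, contributing $F\cup\{w\}$; thus the lower facets of $P$ project exactly onto $S$. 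I would also record here that $S$ is nontrivial: every cell of $S$ is a proper subset of $V$ (a facet $F$ of $V-\{w\}$ is a proper face, so $F\cup\{w\}\subsetneq V$), hence $V\notin S$, and $w\notin\conv(V-\{w\})$ guarantees that $S$ actually has an ear.

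For minimality, let $S'$ be any subdivision of $V$ coarser than $S$ (that is, $S\leq S'$) with $S'\neq S$; the goal is to force $S'=\{V\}$. Since $V-\{w\}\in S$, some cell $S'_0\in S'$ satisfies $V-\{w\}\subseteq S'_0\subseteq V$, so $S'_0=V$ or $S'_0=V-\{w\}$. If $S'_0=V$ then $V\in S'$, and since any other cell of $S'$ would be a proper, hence lower-dimensional, face of $V$, we conclude $S'=\{V\}$, as required. The crux is to rule out $S'_0=V-\{w\}$. Suppose it holds. For each facet $F$ of $V-\{w\}$ visible from $w$, the ear cell $F\cup\{w\}\in S$ lies inside some cell $S'_F\in S'$; as $w\in S'_F$ but $w\notin S'_0$, these cells are distinct, so $S'_F\cap(V-\{w\})=S'_F\cap S'_0$ is a face of $V-\{w\}$ containing the facet $F$, hence equals $F$ or $V-\{w\}$. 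The latter would give $V=(V-\{w\})\cup\{w\}\subseteq S'_F$, incompatible with $V-\{w\}\in S'$; so $S'_F\cap(V-\{w\})=F$, and together with $F\cup\{w\}\subseteq S'_F\subseteq V$ this forces $S'_F=F\cup\{w\}$. Hence $S\subseteq S'$. But $S$ is itself a subdivision of $V$, so $\sum_{A\in S}\vol(\conv(A))=\vol(\conv(V))=\sum_{B\in S'}\vol(\conv(B))$, and each cell has positive volume, so $S\subseteq S'$ already forces $S=S'$, contradicting $S'\neq S$. Thus $S'_0=V-\{w\}$ is impossible, only the first case occurs, and $S$ is a minimal nontrivial regular subdivision of $V$.

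The facts I am treating as standard are that distinct cells of a subdivision have disjoint interiors (so their volumes sum to $\vol(\conv(V))$), that a face of a polytope containing one of its facets is either that facet or the whole polytope, and the lower-envelope bookkeeping used for regularity. The one step that actually requires thought is the dichotomy for the cell $S'_0$ of $S'$ containing $V-\{w\}$: showing that unless $S'_0$ is all of $V$, each ear cell $F\cup\{w\}$ must reappear verbatim in $S'$, which pins $S'$ between $S$ and itself.
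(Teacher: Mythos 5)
Your proof is correct. The paper itself supplies no argument here (it declares the proposition, along with the preceding one, ``straightforward''), so there is no in-paper proof to compare against; but your two-part structure — exhibiting the explicit lift $P$ to verify regularity and nontriviality, and then for minimality showing that any $S'\geq S$ with $S'\neq S$ must be $\{V\}$ by pinning down the cell $S'_0\supseteq V-\{w\}$ and then forcing each ear cell $F\cup\{w\}$ to reappear in $S'$ — is a complete and sound justification. The only step I would flag as worth being slightly more explicit about, for a reader following closely, is the inference $S'_F\cap(V-\{w\})=S'_F\cap S'_0$: this is immediate from $S'_0=V-\{w\}$ and $S'_F\subseteq V$, and then the face-of-a-facet dichotomy does the rest, but it is the hinge of the whole minimality argument. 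Everything you label as ``standard'' (disjoint interiors of cells, positive volumes, the facet dichotomy, the lower-envelope computation) is indeed standard and correctly invoked.
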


\section{Pulling and Pushing in Lexicographic Triangulations}

\label{chapter-BIG}

The remainder of the paper is devoted to demonstrating how one can use a form of greedy algorithm to recover a lexicographic triangulation $T$ of a finite set $V \subset {\mathbb R}^d$ from $\phi(T)$.  The triangulation will be assumed to be lexicographic, but the ordering of the vertices and which are pushed or pulled is not given.

We begin with some lemmas that lead to some useful definitions

\begin{lemma}
Let $W\subseteq V \subset {\mathbb R}^d$ with $\dim({\mathrm{conv}}(W))=d$.  
If $T$ is a refinement of the subdivision $S$ of $W$ obtained by pulling $v^i$, then $z_i(T)=z^{\max}_i(W)$, where
\[
z^{\max}_i(W):=
\left\{
\begin{array}{ll}
{\mathrm{vol}}(\conv(W)), & v^i\in W,\\
0, & v^i\not\in W.
\end{array}
\right.
\]
\end{lemma}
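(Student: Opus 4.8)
The plan is to unwind the definition of ``pulling $v^i$'' and see exactly which $d$-simplices of any refinement $T$ contain $v^i$. When we pull $v^i$ in the trivial subdivision $\{W\}$, the result is the subdivision $S$ consisting of all sets of the form $F \cup \{v^i\}$ where $F$ ranges over the facets of $W$ not containing $v^i$. If $v^i \notin W$, there is nothing to do: $v^i$ is absent from $S$ and hence from every refinement $T$, so $z_i(T) = 0 = z_i^{\max}(W)$, matching the second case. So assume $v^i \in W$.

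Now suppose $v^i \in W$ and $T \le S$. The key observation — already noted in the excerpt (``if we start with the trivial subdivision of $V$ and pull a point $v \in V$ to obtain the subdivision $S$, then $v$ will be in every $d$-dimensional set of every refinement of $S$'') — is that $v^i$ lies in every $d$-simplex of $T$. Granting this, $z_i(T)$ is simply the sum of the volumes of all $d$-simplices of $T$, which is $\vol(\conv(W))$ because $T$ is a triangulation (more precisely a refinement of a subdivision) of $W$ and the $d$-simplices of $T$ tile $\conv(W)$ with overlaps only on lower-dimensional faces. This gives $z_i(T) = \vol(\conv(W)) = z_i^{\max}(W)$, the first case.

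First I would state and justify the claim that $v^i$ belongs to every $d$-simplex of $T$. For $S$ itself this is immediate from the construction, since every $d$-dimensional member of $S$ is of the form $F \cup \{v^i\}$. For a general refinement $T \le S$, each $d$-simplex $T_k$ of $T$ satisfies $T_k \subseteq S_j$ for some $S_j \in S$, and $\conv(T_k)$ is $d$-dimensional; I would argue that since $S_j = F \cup \{v^i\}$ is the vertex set of a $d$-simplex with apex $v^i$ over the facet $F$ of $W$, and $\conv(T_k) \subseteq \conv(S_j)$ is a full-dimensional subset, the point $v^i$ must be a vertex of $T_k$. The cleanest way is to note that $\conv(F)$ is contained in a hyperplane $H$ and $v^i \notin H$, so $\conv(S_j)$ is a pyramid; any full-dimensional sub-simplex of a pyramid whose vertices are drawn from the vertex set of the pyramid must use the apex, because otherwise all its vertices lie in $H$ and its convex hull is degenerate. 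Here I should be slightly careful: the vertices of $T_k$ are points of $V$ (or $W$), not a priori vertices of $\conv(S_j)$; but they all lie in $\conv(S_j) = \conv(F \cup \{v^i\})$, and the only point of this pyramid lying strictly off $H$ that could serve to make $\conv(T_k)$ full-dimensional is $v^i$ — any other point of $\conv(S_j)$ off $H$ is an interior point of the pyramid and cannot be a vertex of a $d$-simplex contained in $\conv(S_j)$ with all vertices in $\conv(S_j)$.

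The main obstacle is exactly this last point: making rigorous the assertion that no vertex of $T_k$ other than $v^i$ can lie off the hyperplane $H$ supporting the facet $F$. This requires combining the fact that $T_k$'s vertices are points of $V$ in convex position as a $d$-simplex, that they all lie in the closed pyramid $\conv(S_j)$, and that the refinement condition (faces meeting in common faces) prevents a vertex of $T_k$ from being an interior point of $\conv(S_j)$ — indeed if some vertex $u \ne v^i$ of $T_k$ were in the relative interior of $\conv(S_j)$, then $u$ would be present in $S$, contradicting that the only points present in $S$ are $v^i$ together with the points of the facets of $W$, all of which lie on the boundary. Once this is pinned down, the volume computation is routine: $\sum_k \vol(\conv(T_k)) = \vol(\conv(W))$ since the $\conv(T_k)$ cover $\conv(W)$ with pairwise intersections of measure zero, and every term of this sum contributes to $z_i(T)$ because $v^i$ is a vertex of every $T_k$.
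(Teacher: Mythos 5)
Your proposal is correct and follows the same route as the paper's one-line proof: every $d$-dimensional member of $S$ is a pyramid with apex $v^i$, a refinement preserves this, so $v^i$ lies in every $d$-simplex of $T$, and $z_i(T)=\vol(\conv(W))$. However, the ``main obstacle'' you flag in your last paragraph is not actually there, and you talked yourself out of a correct short argument. In the paper's definition of refinement, $T_k \subseteq S_j$ is containment of \emph{point sets}, not merely of convex hulls; so every vertex of a $d$-simplex $T_k$ of $T$ already lies in $S_j = F \cup \{v^i\}$. If $v^i \notin T_k$, then $T_k \subseteq F$, hence $\conv(T_k)$ lies in the hyperplane spanning $F$, contradicting $\dim(\conv(T_k))=d$. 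There is no need to argue about interior points of $\conv(S_j)$, or to worry that vertices of $T_k$ might fail to be vertices of $\conv(S_j)$; that worry comes from implicitly reading ``$T_k \subseteq S_j$'' as ``$\conv(T_k)\subseteq\conv(S_j)$,'' which is weaker than what the definition gives you.
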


\begin{proof}
From earlier observations, if $v^i\in W$ then $S$ consists of a collection of pyramids each having $v^i$ as a vertex, and any refinement $T$ of $S$ preserves this property.  Thus $v^i$ is in every $d$-simplex of the triangulation.  
\end{proof}

We also have the following easy result.

\begin{lemma}
Let $W\subseteq V \subset {\mathbb R}^d$ with $\dim({\mathrm{conv}}(W))=d$.  
Then $\max\{z_i(T):T$ is a triangulation of $W\}=z^{\max}_i(W)$, and if in any triangulation $T$ of $W$ we have $z_i(T)=z^{\max}_i(W)$, then $v^i$ must be in every $d$-simplex of $T$.
\end{lemma}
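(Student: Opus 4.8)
The statement has two halves, and the natural approach is to reduce the first to the previous lemma and the second to a volume-packing argument. First I would observe that for any triangulation $T$ of $W$, the $d$-simplices of $T$ having $v^i$ as a vertex have interiors that are pairwise disjoint and are all contained in $\conv(W)$, so $z_i(T) = \sum_{v^i \in T_j} \vol(\conv(T_j)) \le \vol(\conv(W))$; and of course $z_i(T) = 0$ when $v^i \notin W$. This gives $z_i(T) \le z^{\max}_i(W)$ in all cases. To see the maximum is attained, invoke the previous lemma: take $S$ to be the subdivision of $W$ obtained by pulling $v^i$ in the trivial subdivision, and let $T$ be any triangulation refining $S$ (one exists, e.g.\ by continuing to pull the remaining points of $W$ in any order, which produces a lexicographic triangulation). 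By the previous lemma, $z_i(T) = z^{\max}_i(W)$, so the supremum is achieved.

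For the second half, suppose $T$ is a triangulation of $W$ with $z_i(T) = z^{\max}_i(W)$. If $v^i \notin W$ there is nothing to prove (vacuously $v^i$ is in no simplex, and $z^{\max}_i(W)=0$). So assume $v^i \in W$, hence $z_i(T) = \vol(\conv(W))$. Now partition the $d$-simplices of $T$ into those containing $v^i$ and those not containing it. The union of the first group has volume exactly $\vol(\conv(W))$, which is the total volume of $\conv(W)$; since the simplices of $T$ tile $\conv(W)$ with disjoint interiors, the simplices \emph{not} containing $v^i$ must together have volume zero, and being $d$-dimensional they number zero. Hence every $d$-simplex of $T$ contains $v^i$.

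The only point requiring a little care — and the step I would flag as the main (if modest) obstacle — is the additivity of volume over a triangulation: namely that $\vol(\conv(W)) = \sum_j \vol(\conv(T_j))$, with no overcounting, because distinct $d$-simplices of a triangulation meet only in lower-dimensional common faces and hence have disjoint interiors, while their union is all of $\conv(W)$. This is immediate from the definition of triangulation given in Section~\ref{chapter-pre}, so the argument is short; the subtlety is just making sure the inequality $z_i(T) \le \vol(\conv(W))$ is argued from disjointness of interiors rather than a naive sum. Once that is in hand, both halves follow as above.
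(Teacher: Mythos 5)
The paper states this lemma without proof (calling it an ``easy result''), so there is no official argument to compare against; your proof is correct and is precisely the natural one, mirroring the paper's own proof of the analogous $z^{\min}_i$ lemma (which uses the same volume-additivity/disjoint-interiors reasoning, just with the inequality reversed and the complement $\conv(W-\{v^i\})$ in place of the trivial bound $\vol(\conv(W))$). One tiny quibble: in your second paragraph, when $v^i\notin W$ the conclusion ``$v^i$ is in every $d$-simplex of $T$'' is in fact false (there is at least one $d$-simplex), so that case is not ``vacuous''; the lemma is implicitly intended for $v^i\in W$, and it would be cleaner to say so rather than claim vacuity.
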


\begin{lemma}
Let $W\subseteq V \subset {\mathbb R}^d$ with $\dim({\mathrm{conv}}(W))=d$.  
If $T$ is a refinement of the subdivision $S$ of $W$ obtained by pushing $v^i$, then $z_i(T)=z^{\min}_i(W)$, where
\[
z^{\min}_i(W):=
\left\{
\begin{array}{ll}
{\mathrm{vol}}(\conv(W))-{\mathrm{vol}}(\conv(W-\{v^i\}), & v^i\in W,\\
0, & v^i\not\in W.
\end{array}
\right.
\]
\end{lemma}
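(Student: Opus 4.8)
The argument follows the case analysis built into the definition of pushing. If $v^i\notin W$ there is nothing to prove, since no $d$-simplex of a triangulation of $W$ has $v^i$ as a vertex, so $z_i(T)=0=z^{\min}_i(W)$. Suppose next that $v^i\in W$ and $\dim(\conv(W-\{v^i\}))=d-1$, so that $W$ is a pyramid with apex $v^i$; then pushing $v^i$ in the trivial subdivision leaves it unchanged, $T$ is an arbitrary triangulation of the pyramid $W$, and by the observation in Section~\ref{chapter-pre} that every subdivision of a pyramid is a collection of pyramids with the common apex, $v^i$ lies in every $d$-simplex of $T$; thus $z_i(T)=\vol(\conv(W))$, which equals $z^{\min}_i(W)$ because $\vol(\conv(W-\{v^i\}))=0$. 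Finally, if $v^i\in W$, $\dim(\conv(W-\{v^i\}))=d$, and $v^i\in\conv(W-\{v^i\})$ (equivalently, $v^i$ is not a vertex of $\conv(W)$), then pushing $v^i$ gives $S=\{W-\{v^i\}\}$, so $v^i$ is absent from every refinement and $\conv(W)=\conv(W-\{v^i\})$, making both sides of the claimed identity equal to $0$.

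This leaves the essential case, where $v^i$ is a vertex of $\conv(W)$ and $\dim(\conv(W-\{v^i\}))=d$. Here pushing $v^i$ in the trivial subdivision produces
\[
S=\{\,W-\{v^i\}\,\}\ \cup\ \{\,F\cup\{v^i\}: F\text{ is a facet of }W-\{v^i\}\text{ visible from }v^i\,\}.
\]
Because $T$ refines $S$, every $d$-simplex of $T$ lies inside exactly one block of $S$: distinct blocks of a subdivision meet in a common proper face and so have disjoint interiors, while a $d$-simplex has nonempty interior. A $d$-simplex of $T$ inside the block $W-\{v^i\}$ omits $v^i$; a $d$-simplex of $T$ inside an ear block $F\cup\{v^i\}$ contains $v^i$, since $\conv(F\cup\{v^i\})$ is a pyramid with apex $v^i$ and, again by Section~\ref{chapter-pre}, every member of a triangulation of a pyramid has the apex as a vertex. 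Consequently the simplices of $T$ that contain $v^i$ are exactly those lying in the ear blocks, and within each ear block they triangulate it, so
\[
z_i(T)=\sum_{F}\vol(\conv(F\cup\{v^i\})),
\]
the sum running over all facets $F$ of $W-\{v^i\}$ visible from $v^i$.

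To finish I would use the beneath--beyond volume identity, which is immediate once one knows $S$ is a subdivision of $W$: the convex hulls of the blocks of $S$ cover $\conv(W)$ with pairwise disjoint interiors, hence their volumes sum to $\vol(\conv(W))$, and subtracting off the contribution $\vol(\conv(W-\{v^i\}))$ of the one block not built on $v^i$ leaves $\sum_{F}\vol(\conv(F\cup\{v^i\}))=\vol(\conv(W))-\vol(\conv(W-\{v^i\}))$. Combined with the previous display this gives $z_i(T)=z^{\min}_i(W)$. The only step needing any care is the identification of which $d$-simplices of $T$ carry $v^i$, and that rests entirely on the already-recorded structure of triangulations of pyramids; everything else is additivity of volume over an interior-disjoint decomposition, so I anticipate no real obstacle.
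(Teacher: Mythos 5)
Your proof is correct and follows the same route as the paper's: pushing $v^i$ yields a generalized ear subdivision whose blocks containing $v^i$ are pyramids with apex $v^i$, so the $d$-simplices of any refinement $T$ meeting $v^i$ are exactly those triangulating those ear blocks, and additivity of volume over the interior-disjoint blocks of $S$ gives $z_i(T)=\vol(\conv(W))-\vol(\conv(W-\{v^i\}))$. The only difference is that you spell out the degenerate cases ($v^i\notin W$, $W$ a pyramid with apex $v^i$, $v^i$ a non-vertex) and the uniqueness of the block containing a given simplex, which the paper leaves implicit under ``earlier observations.''
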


\begin{proof}
From earlier observations, the $d$-polytopes containing $v^i$ in $S$ together are a set of pyramids subdividing $E(S,v^i)$, and any refinement $T$ of $S$ preserves this property.  Thus 
\[
z_i(T)=\sum_{T_j\in E(T,v^i)}\vol(T_j)={\mathrm{vol}}(\conv(W))-{\mathrm{vol}}(\conv(W-\{v^i\}).
\]
\end{proof}

\begin{lemma}
Let $W\subseteq V \subset {\mathbb R}^d$ with $\dim({\mathrm{conv}}(W))=d$.  
Then $\min\{z_i(T):T$ is a triangulation of $W\}=z^{\min}_i(W)$, and if in any triangulation $T$ we have $z_i(T)=z^{\min}_i(W)>0$, then $T$ must be a generalized ear subdivision of $W$ with ear point $v^i$.  If $z_i(T)=z^{\min}_i(W)=0$ then $v^i$ is not a vertex of $V$ and not in any $d$-simplex of $T$.
\end{lemma}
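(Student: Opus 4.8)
The plan is to pin down the extremal value from the two preceding lemmas together with one subadditivity-of-volume estimate, and then to extract the structural conclusion from the equality condition. For the value: the preceding lemma on pushing $v^i$ already exhibits a triangulation with $z_i=z^{\min}_i(W)$, so $\min_T z_i(T)\le z^{\min}_i(W)$. For the reverse inequality I would take an arbitrary triangulation $T$ of $W$ and split its $d$-simplices into those containing $v^i$, whose hulls union to a set $R$ with $\vol(R)=z_i(T)$, and those not containing $v^i$, whose hulls union to a set $R'\subseteq\conv(W-\{v^i\})$. Since $\conv(W)=R\cup R'$, subadditivity gives $\vol(\conv(W))\le z_i(T)+\vol(\conv(W-\{v^i\}))$, that is, $z_i(T)\ge z^{\min}_i(W)$; when $v^i\notin W$ both sides vanish. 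Hence the minimum is $z^{\min}_i(W)$.

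Now suppose $z_i(T)=z^{\min}_i(W)$. If this common value is $0$, no $d$-simplex of $T$ contains $v^i$ (each has positive volume); and if moreover $v^i\in W$, then $\vol(\conv(W-\{v^i\}))=\vol(\conv(W))$ with $\conv(W-\{v^i\})\subseteq\conv(W)$ gives $\conv(W-\{v^i\})=\conv(W)$, so $v^i\in\conv(W-\{v^i\})\subseteq\conv(V-\{v^i\})$ and $v^i$ is not a vertex of $V$ (the case $v^i\notin W$ being immediate). If the common value is positive then $v^i\in W$; if $\dim(\conv(W-\{v^i\}))=d-1$ then $W$ is a pyramid with apex $v^i$ and, by the definition of generalized ear subdivision, every triangulation of $W$ is a generalized ear subdivision with ear point $v^i$. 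So I may assume $\dim(\conv(W-\{v^i\}))=d$; then positivity forces $\conv(W-\{v^i\})\subsetneq\conv(W)$, so $v^i$ is a vertex of $\conv(W)$. The equality now reads $\vol(R')=\vol(\conv(W-\{v^i\}))$, and since $R'$ is a closed subset of the full-dimensional convex body $\conv(W-\{v^i\})$ of equal volume, $R'=\conv(W-\{v^i\})$; consequently every $d$-simplex of $T$ through $v^i$ has interior disjoint from $\conv(W-\{v^i\})$.

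The crux — and the step I expect to be the main obstacle — is upgrading this to: for each $d$-simplex $T_j$ with $v^i\in T_j$ there is a facet $F$ of $\conv(W-\{v^i\})$ visible from $v^i$ with $T_j\subseteq F\cup\{v^i\}$. Writing $G=\conv(T_j-\{v^i\})$ for the facet of $\conv(T_j)$ opposite $v^i$, I would show that relative-interior points of $G$ lie on $\partial(\conv(W-\{v^i\}))$ — they are approached from inside $\conv(T_j)$, which misses $\conv(W-\{v^i\})$ — and hence that $\mathrm{aff}(G)$ supports $\conv(W-\{v^i\})$, so $F:=\mathrm{aff}(G)\cap\conv(W-\{v^i\})$ is a facet containing $T_j-\{v^i\}$; and that $F$ must be visible from $v^i$, since otherwise $v^i$ would lie on the interior side of $\mathrm{aff}(G)$ and a short push of a relative-interior point of $G$ into $\conv(W-\{v^i\})$ would also enter $\mathrm{int}(\conv(T_j))$, contradicting the last sentence of the previous paragraph. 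Granting this, each $T_j$ containing $v^i$ refines some $F\cup\{v^i\}$ with $F$ visible and each $T_j$ not containing $v^i$ refines $W-\{v^i\}$, so $T$ refines $\{W-\{v^i\}\}\cup\{F\cup\{v^i\}:F\text{ a facet of }\conv(W-\{v^i\})\text{ visible from }v^i\}$; that is, $T$ is a generalized ear subdivision of $W$ with ear point $v^i$. The remaining items — the volume bookkeeping, the density argument upgrading ``disjoint interiors'' to ``disjoint from $\conv(W-\{v^i\})$'', and the supporting-hyperplane claim for $\mathrm{aff}(G)$ — are routine.
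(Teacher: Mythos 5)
Your proposal is correct and follows essentially the same volume-comparison route as the paper: lower-bound $z_i(T)$ by observing the simplices not containing $v^i$ cover at most $\conv(W-\{v^i\})$, then characterize equality. The only difference is degree of detail — the paper's proof is a single short paragraph that ends with the assertion that equality forces the simplices through $v^i$ into the closure of $\conv(W)\setminus\conv(W-\{v^i\})$, leaving both the $z^{\min}_i(W)=0$ case and the upgrade to the generalized-ear conclusion implicit, whereas you spell out that upgrade (the supporting-hyperplane argument showing each simplex through $v^i$ sits over a visible facet) explicitly, and rightly; your ``crux'' step is precisely the step the paper elides.
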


\begin{proof}
In any triangulation $T$, the sum of the volumes of the $d$-simplices not containing $v^i$ cannot exceed $\vol(\conv(W-\{v^i\}))$.  Thus $z_i(T)\geq z^{\min}_i(W)$, and the only way to achieve this value is for all $d$-simplices containing $v^i$ to be contained in (the closure of) $\conv(W)-\conv(W-\{v^i\})$.
\end{proof}

The above lemmas suggest the following definitions.
Let $T$ be a triangulation of $W\subseteq V \subset {\mathbb R}^d$, $\dim ({\mathrm{conv}} (W))=d$.  Then a point $v^i \in W$ is a {\it candidate to be pushed first in $T$} if $z_i(T)=z^{\min}_i(W)$
and a {\it candidate to be pulled first in $T$} if 
$z_i(T)=z^{\max}_i(W).$


Next we present a few lemmas that  will be used to prove the main theorem.  The proofs follow directly from the definition of a lexicographic triangulation. 

\begin{lemma}
Suppose  $V \subset {\mathbb R}^d, \dim ({\mathrm{conv}}(V) )=d$.  Let $S$ be a subdivision of $V$.  Suppose  $T$ is a lexicographic triangulation obtained by refining $S$ by pulling and/or pushing the points   of $S$ via the  order $v^1, \dots, v^n$.  Then each set $S_i \in S$ is triangulated by pulling and/or pushing the points   of $S_i$ in the induced order.
\label{lemma-L1}
\end{lemma}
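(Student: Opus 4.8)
The plan is to unwind the definition of a lexicographic triangulation and track what the pulling/pushing operations do to the individual cells of $S$. Recall that $T$ is obtained from $S$ by processing $v^1,\dots,v^n$ in order, where at each step we apply either a pull or a push operation to the current subdivision. The key structural observation is that each of the atomic operations (pulling $v$ or pushing $v$) acts cell-by-cell: from the definitions in Section~\ref{section-pre-triangulation}, when we pull or push a point $v$ in a subdivision $U = \{U_1,\dots,U_k\}$, each cell $U_\ell$ with $v \notin U_\ell$ is left untouched, and each cell $U_\ell$ with $v \in U_\ell$ is replaced by the cells obtained by pulling (resp.\ pushing) $v$ \emph{within} $U_\ell$ regarded as its own point configuration. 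Moreover, when $v \notin U_\ell$ the trivial ``operation'' on $U_\ell$ is exactly what pulling/pushing $v$ in the configuration $U_\ell$ does (since $v$ is absent from $U_\ell$, both operations leave it unchanged, consistent with the convention noted in the excerpt).

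First I would set up an induction on the length of the pull/push sequence, i.e.\ on $n$. For the base case $n = 0$, the triangulation $T$ equals $S$ itself and each $S_i$ is triangulated by the empty sequence, which is vacuously the induced order. For the inductive step, write $T$ as the result of first refining $S$ to $S'$ by pulling or pushing $v^1$, and then refining $S'$ to $T$ by pulling and/or pushing $v^2,\dots,v^n$ in that order. By the cell-by-cell behavior of the single operation on $v^1$, the subdivision $S'$ restricted to any cell $S_i \in S$ is exactly the subdivision $S_i'$ of $S_i$ obtained by pulling or pushing $v^1$ in $S_i$ (with the convention that this does nothing if $v^1 \notin S_i$). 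Then, since $T$ is a lexicographic refinement of $S'$ via the order $v^2,\dots,v^n$, and $S'$ restricted to $S_i$ equals $S_i'$, the induction hypothesis applied to the pair $(S_i', T|_{S_i})$ — or more cleanly, a second induction showing that each cell of $S'$ is lexicographically triangulated in the induced order — yields that the cells of $T$ inside $S_i'$ are obtained by pulling/pushing $v^2,\dots,v^n$ in the induced order on $S_i'$. Composing, $S_i$ is triangulated by pulling/pushing $v^1, v^2,\dots,v^n$ in the induced order, as desired.

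I expect the main (though still minor) obstacle to be bookkeeping about \emph{absent} points and the interaction with the stated conventions: when $v^j \notin S_i$, the ``$j$-th step'' of the induced process on $S_i$ must be interpreted as a no-op, and one must check this is consistent with the definition of a lexicographic triangulation of $S_i$ (which permits skipping points, since a lexicographic triangulation is built by pulling/pushing ``some/all the points''). One must also verify that when $v^j \in S_i$ but, say, $S_i$ is a pyramid with apex $v^j$, the operation genuinely leaves $S_i$ alone, again matching the global behavior; this is exactly the remark in Section~\ref{section-pre-triangulation} that pulling or pushing an apex of a pyramid changes nothing. Once these edge cases are checked, the argument is a clean induction driven entirely by the fact that pull and push are local operations on cells.
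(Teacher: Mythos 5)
Your proof is correct and rests on the same key observation the paper uses — that pulling and pushing act cell-by-cell on a subdivision — with the paper's brief remark replaced by an explicit induction on the length of the pull/push sequence. The added care around absent points and pyramid apices is sound but does not change the substance of the argument.
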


Lemma ~\ref{lemma-L1} is seen to be true by recalling that the subdivision $S$ is refined by looking at the sets $S_i \in  S$.  So, pulling or pushing  a point  of $V$ subdivides each set $S_i$ according to the pulling and pushing rules.

The following lemma follows directly from the definition of a lexicographic triangulation.

\begin{lemma}
Suppose  $V \subset {\mathbb R}^d, \dim ({\mathrm{conv}}(V) )=d$.  Let $T$ be a lexicographic triangulation of $V$ and $F$ be a face of $V$.  Then the triangulation of $F$ induced by $T$ is identical to the triangulation of $F$ given by pulling or pushing the points   of $F$ according to  the induced  ordering.
\label{lemma-L2}
\end{lemma}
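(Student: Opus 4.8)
The plan is to reduce the statement about a face $F$ of $V$ to the already-established Lemma~\ref{lemma-L1} by viewing the induced triangulation of $F$ as the restriction of $T$ to a particular set in an intermediate subdivision. First I would recall that since $T$ is a lexicographic triangulation of $V$, it is obtained from the trivial subdivision $\{V\}$ by pulling and/or pushing the points $v^1,\dots,v^n$ in that order. I would then track what happens along the way to the face $F$: the key observation is that pulling or pushing a point $v^i$ that does \emph{not} lie on $\mathrm{aff}(F)$ leaves the induced structure on $F$ unchanged, because such a point is either outside $\mathrm{conv}(F)$ or interior to $V$ relative to $F$, and in either situation the pulling/pushing operation only refines the $d$-dimensional cells adjacent to $v^i$ without cutting into the lower-dimensional face $F$; and a point on $\mathrm{aff}(F)$ that is not in $F$ itself cannot be a vertex of any simplex meeting the relative interior of $F$. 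So the only operations that affect $F$ are the pull/push operations applied to the points of $F$, performed in the order induced by $v^1,\dots,v^n$.

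The cleanest way to make this rigorous is to argue by induction on $n$, or equivalently to insert $F$ into a subdivision and invoke Lemma~\ref{lemma-L1}. Concretely, consider the first point $v^i$ in the ordering that lies on $\mathrm{aff}(F)$. If $F$ is itself a face of $V$, then $F$ together with the facets of $V$ (and iterating, a full flag of faces) can be organized so that at the stage just before $v^i$ is processed, the current subdivision $S$ contains $F$ as one of its members — more precisely, contains the trivial subdivision $\{F\}$ of $F$ sitting inside the induced subdivision of $\mathrm{conv}(V)$, because none of the earlier operations touched any point on $\mathrm{aff}(F)$ and hence the relatively open face $\mathrm{conv}(F)$ is still uncut. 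Then Lemma~\ref{lemma-L1}, applied with this $S$, tells us that the cell $\{F\}\in S$ gets triangulated precisely by pulling/pushing the points of $F$ in the induced order, which is exactly the claim. The induced order on $F$ coming from $v^1,\dots,v^n$ is the same whether we read it off from the original list or from the list restricted to those indices $\geq i$, since the earlier points are not in $F$.

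I would organize the write-up as: (1) state that $T$ arises from $\{V\}$ by the ordered pull/push sequence; (2) prove the key claim that any pull or push of a point $v^j\notin F$ does not alter the induced subdivision on $\mathrm{conv}(F)$ — splitting into the subcases $v^j\notin\mathrm{aff}(F)$ and $v^j\in\mathrm{aff}(F)\setminus F$, using the explicit descriptions of pulling and pushing from Section~\ref{section-pre-triangulation} together with the remark that pulling/pushing a point leaves unchanged any cell not containing it; (3) conclude that after processing all points of $V$, the cells of $T$ lying in $\mathrm{conv}(F)$ coincide with the triangulation of $F$ produced by pulling/pushing the points of $F$ in the induced order, either directly or by citing Lemma~\ref{lemma-L1} with $S$ the subdivision present just before the first point of $F$ is processed.

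The main obstacle I expect is step (2), the invariance claim: one must be careful that pushing a point $v^j$ with $v^j\notin F$ but $v^j$ on a facet hyperplane meeting $F$, or pulling such a point, genuinely does not subdivide $\mathrm{conv}(F)$. This requires checking that every new simplex created by the operation either avoids $\mathrm{aff}(F)$ in its relative interior or, if it meets $F$, does so only in a face that was already a cell of the induced subdivision of $F$. The cleanest handle is the regularity/lifting description from Section~\ref{section-pre-reg}: a lexicographic triangulation is the lower-facet triangulation for a lifting with $|\varepsilon_1|\gg\cdots\gg|\varepsilon_n|\gg 0$, and the induced triangulation on a face $F$ is the lower-facet triangulation for the restricted lifting, whose $\varepsilon$-values inherit exactly the same hierarchy and signs — so the induced triangulation of $F$ is again lexicographic with the induced order and the induced choice of pushes and pulls. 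Presenting the argument through the lifting may in fact be the shortest route and sidesteps the delicate case analysis entirely, though one still must justify that restricting the lifted point configuration to $\mathrm{aff}(F)\times\mathbb{R}$ and taking lower facets reproduces the induced triangulation, which is a standard but not entirely trivial fact about regular subdivisions.
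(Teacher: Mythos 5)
The paper does not actually write out a proof for this lemma; it simply asserts that the statement ``follows directly from the definition of a lexicographic triangulation,'' so there is no reference argument to match yours against.  Of the routes you sketch, the one you call cleanest --- inserting $F$ into an intermediate subdivision $S$ and invoking Lemma~\ref{lemma-L1} --- does not work as stated.  Lemma~\ref{lemma-L1} speaks only about the full-dimensional cells $S_i$ of a subdivision $S$ of $V$, and by the definition in Section~\ref{chapter-pre} every member of a subdivision satisfies $\dim(\conv(S_i))=d$.  A proper face $F$ of $V$, with $\dim(\conv(F))<d$, is therefore never a member of any subdivision of $V$, so ``the cell $\{F\}\in S$'' does not exist and Lemma~\ref{lemma-L1} cannot be applied to it.  What that step actually needs is the face-version of Lemma~\ref{lemma-L1}, which is precisely the statement being proved, so this route is circular.

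Your other two routes are essentially correct, though neither is fully carried out.  One small simplification: the subcase $v^j\in\mathrm{aff}(F)\setminus F$ never arises, since $F=V\cap H$ for a supporting hyperplane $H$ forces $\mathrm{aff}(F)\subseteq H$ and hence any $v^j\in V\cap\mathrm{aff}(F)$ is already in $F$.  So the invariance claim in your step (2) has only the case $v^j\notin\mathrm{aff}(F)$, where the key point is that $v^j$ lies strictly in the open halfspace of $H$ containing $\conv(V)$, so every new cell produced by pulling or pushing $v^j$ meets $H$ only in a face of $\conv(F)$ already present; this is true but does need to be checked, especially for a push.  The lifting argument is indeed the shortest path: $T$ is the lower-facet triangulation of $P=\conv\{(v^1,\varepsilon_1),\dots,(v^n,\varepsilon_n)\}$ with $|\varepsilon_1|\gg\cdots\gg|\varepsilon_n|\gg0$; the face $\conv(F)$ of $\conv(V)$ pulls back under $\pi$ to the face $\conv\{(v^i,\varepsilon_i):v^i\in F\}$ of $P$ (because $F$ being a face gives $V\cap\conv(F)=F$), the lower facets of that face of $P$ project exactly to the triangulation of $F$ induced by $T$, and the restricted $\varepsilon$'s inherit the same hierarchy and signs, so the induced triangulation is lexicographic with the induced order and the same pull/push choices.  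Any of these, written out, would be a legitimate proof; the Lemma~\ref{lemma-L1} shortcut is the part to discard.
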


The next lemma will be used extensively in the proof to the main theorem which uses induction on the number of points.   We will often use the fact that we have a pyramid and can triangulate it according to the triangulation of the base.  The order for the  triangulation of the base, in the main theorem, is obtained using the inductive hypothesis.  Its proof follows readily from the definitions of pushing and pulling.
 
\begin{lemma}
Suppose  $V \subset {\mathbb R}^d, \dim ({\mathrm{conv}}(V) )=d$.  Suppose  $V$ is a pyramid with apex $v$ and $T$ is a lexicographic triangulation of $V$.  Then $T$ can be obtained by pulling or pushing $v$ at any point in the  order.  In fact, $T$ is given by pyramids over the $(d-1)$-simplices of the induced lexicographic triangulation of the base of $V$.
\label{lemma-L3}
\end{lemma}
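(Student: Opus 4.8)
The plan is to lean on two facts recorded earlier in the excerpt: that any subdivision of a pyramid $V$ with apex $v$ is a collection of pyramids $\{T_1\cup\{v\},\dots,T_m\cup\{v\}\}$ over a subdivision $\{T_1,\dots,T_m\}$ of the base $V-\{v\}$; and that if $S$ is a subdivision of a pyramid $V$ with apex $v$, then pulling or pushing $v$ leaves $S$ unchanged.

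First I would fix an ordering of the points of $V$, together with a pull/push label for each, that produces $T$; such data exists because $T$ is a lexicographic triangulation. Since $\conv(V)$ is $d$-dimensional while $\dim(\conv(V-\{v\}))=d-1$, the apex $v$ lies off ${\mathrm{aff}}(V-\{v\})$, hence is an extreme point of $V$, so it genuinely occurs in this ordering, say in position $k$. Running the lexicographic construction, every intermediate subdivision is a subdivision of $V$, and since $V$ is a pyramid with apex $v$ each of these is a collection of pyramids with apex $v$; in particular the step at position $k$, in which $v$ is pulled or pushed, leaves the current subdivision unchanged. Therefore the same sequence of subdivisions, and so the same final triangulation $T$, results from performing only the operations on the points other than $v$, in their induced order. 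Slotting the (necessarily trivial) pull-or-push of $v$ back into this order at any position $j$ still changes nothing, since the subdivision present just before position $j$ is again a subdivision of $V$ and hence a collection of pyramids with apex $v$. This establishes that $T$ is obtained by pulling or pushing $v$ at any point in the order.

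For the remaining assertion I would write $T=\{T_1\cup\{v\},\dots,T_m\cup\{v\}\}$ with $\{T_1,\dots,T_m\}$ the induced subdivision of the base $V-\{v\}$, using the pyramid structure of the cells of $T$. Because $T$ is a triangulation, each $T_j\cup\{v\}$ has $d+1$ points, so each $T_j$ has $d$ points and is a $(d-1)$-simplex; thus $\{T_1,\dots,T_m\}$ is the triangulation of the face $V-\{v\}$ induced by $T$. By Lemma~\ref{lemma-L2}, that induced triangulation of $V-\{v\}$ equals the lexicographic triangulation of the base obtained by pulling or pushing the base points in the order induced on them. Hence $T$ is precisely the collection of pyramids with apex $v$ over the $(d-1)$-simplices of that lexicographic triangulation of the base.

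The argument is largely bookkeeping once the two quoted observations are available; the only point needing a little care is the step-by-step comparison of the run of the algorithm that processes $v$ in its original place with the runs that omit $v$ or reinsert it elsewhere, where one must check that the two runs produce identical subdivisions up to the first place they could differ. They do, because up to that point both runs perform the same operations on the same points, so this is not a genuine obstacle.
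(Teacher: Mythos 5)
Your proof is correct, and it is essentially the argument the paper has in mind: the paper merely asserts that the lemma ``follows readily from the definitions of pushing and pulling,'' and your write-up carries out exactly that, using the two earlier observations (a subdivision of a pyramid is a family of pyramids over a subdivision of the base, and pulling/pushing the apex fixes any subdivision of the pyramid) together with Lemma~\ref{lemma-L2} for the second assertion. The step-by-step comparison of the runs with $v$ omitted and reinserted is the only place needing care, and you handle it correctly.
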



\section{The Main Theorem}

We now state the main theorem.

\begin{theorem}
\label{thm-BIG}
Let $V=\{v^1, \dots, v^n\} \subseteq {\mathbb R}^d$, $\dim ({\mathrm{conv}} (V))=d$,  and $T$ be a lexicographic triangulation of $V$ given by the order $v^1, \dots, v^n$.  If  $v^k$ is a candidate to be pulled first (or a candidate to be pushed first),  then $T$ can be obtained via the order $v^k, v^1, \dots, v^{k-1}, v^{k+1}, \dots, v^n$, where $v^i$, $i \neq k$,  is pulled (pushed) if it was pulled (pushed) in the original order and $v^k$ is pulled (pushed) if it was a candidate to be pulled (pushed) first in $T$.
\end{theorem}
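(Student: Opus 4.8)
The plan is to treat the two possibilities for $v^k$ separately, with Lemmas \ref{lemma-L1}--\ref{lemma-L3} and the $z^{\max}_i$/$z^{\min}_i$ lemmas as the only real tools; one degenerate case is peeled off first. If $V$ is a pyramid with apex $v^k$, the assertion is immediate from Lemma \ref{lemma-L3}: $T$ is the set of pyramids over a fixed lexicographic triangulation of the base, and $v^k$ may be inserted at any position with either label. So assume $\dim(\conv(V-\{v^k\}))=d$.

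\emph{Candidate to be pulled first.} Here $z_k(T)=z^{\max}_k(V)=\vol(\conv(V))$, so every $d$-simplex of $T$ contains $v^k$; write each such simplex as the join $v^k\ast\sigma$ of $v^k$ with its opposite $(d-1)$-face, and set $L=\{\sigma:v^k\ast\sigma\in T\}$. A short argument --- were some $\conv(\sigma)$ interior to $\conv(V)$, the $d$-simplex of $T$ on its far side would have to cross $\sigma$, since it too contains $v^k$ --- shows that each $\conv(\sigma)$ lies in a unique facet $F$ of $V$ with $v^k\notin F$, and that the members of $L$ lying in $\conv(F)$ are precisely the $(d-1)$-simplices of the triangulation $T|_F$ induced on $F$; hence $T=\bigcup_F v^k\ast(T|_F)$, the union over facets $F$ of $V$ not containing $v^k$. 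Pulling $v^k$ first turns the trivial subdivision into $S^{\ast}=\{F\cup\{v^k\}:F\text{ a facet of }V,\ v^k\notin F\}$, a family of pyramids with apex $v^k$. Refining $S^{\ast}$ by the remaining points in their original order, Lemma \ref{lemma-L1} triangulates each piece by the induced order, Lemma \ref{lemma-L3} identifies that triangulation of $F\cup\{v^k\}$ with $v^k\ast(\text{the lexicographic triangulation of }F)$, and Lemma \ref{lemma-L2} (applied to $T$ and the face $F$ of $V$, whose induced order is unchanged by deleting $v^k\notin F$) identifies the latter with $T|_F$. So the refinement of $S^{\ast}$ is $\bigcup_F v^k\ast(T|_F)=T$.

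\emph{Candidate to be pushed first.} Now $z_k(T)=z^{\min}_k(V)$. If this is $0$, then $v^k$ is not a vertex of $V$, lies in $\conv(V-\{v^k\})$, and appears in no simplex of $T$; pushing $v^k$ first gives $S^{\ast}=\{V-\{v^k\}\}$, since no facet of $V-\{v^k\}$ is visible from the interior point $v^k$. If $z^{\min}_k(V)>0$, then $v^k$ is a vertex of $V$ and $T$ is a generalized ear subdivision of $V$ with ear point $v^k$: the $d$-simplices of $T$ containing $v^k$ lie in $\overline{\conv(V)\setminus\conv(W)}$ and the others triangulate $\conv(W)$, where $W:=V-\{v^k\}$; call this latter triangulation $T_W$. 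Arguing exactly as in the pull case, but with the facets $G$ of $W$ visible from $v^k$ playing the role of $F$, the ear simplices are precisely $\bigcup_G v^k\ast(T_W|_G)$. Pushing $v^k$ first gives $S^{\ast}=\{W\}\cup\{G\cup\{v^k\}:G\text{ a facet of }W\text{ visible from }v^k\}$; refining by the remaining points, Lemma \ref{lemma-L1} triangulates the piece $W$ by the lexicographic triangulation $T_W'$ of $W$ in the $v^k$-deleted order, and Lemmas \ref{lemma-L1} and \ref{lemma-L3} triangulate each $G\cup\{v^k\}$ as $v^k\ast(\text{lexicographic triangulation of }G)$, which by Lemma \ref{lemma-L2} (for $T_W'$ and its facet $G$) equals $v^k\ast(T_W'|_G)$. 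Thus the entire push case collapses to the single statement
\[
T_W \;=\; T_W',
\]
i.e., the part of $T$ over $\conv(W)$ is the lexicographic triangulation of $W$ given by the order with $v^k$ deleted (when $z^{\min}_k(V)=0$ there is no ear and this already \emph{is} the assertion).

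I expect this last statement to be the main obstacle, and it is precisely where the hypothesis $z_k(T)=z^{\min}_k(V)$ is used. The cleanest route I see invokes the realization of lexicographic triangulations as regular ones \cite{LEE91}: realize $T$ as the lower-envelope triangulation of the lift $v^i\mapsto(v^i,\varepsilon_i)$ with $|\varepsilon_1|\gg\cdots\gg|\varepsilon_n|\gg 0$ and $\mathrm{sign}(\varepsilon_i)$ matching the label of $v^i$. Since no $d$-simplex of $T$ through $v^k$ meets $\mathrm{int}\,\conv(W)$, the lifted point $(v^k,\varepsilon_k)$ supports no lower face whose projection meets $\mathrm{int}\,\conv(W)$; deleting a point that does not support the lower envelope over a subpolytope does not change the envelope there, so the lower envelopes of the full lift and of the lift with $(v^k,\varepsilon_k)$ removed agree over $\conv(W)$. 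The truncated lift realizes exactly the lexicographic triangulation of $W$ in the $v^k$-deleted order, giving $T_W=T_W'$. (Alternatively, one can argue by induction on $n$: process $v^1$ first, observe --- via a volume count that uses precisely $z_k(T)=z^{\min}_k(V)$ --- that $v^k$ is still a candidate to be pushed (resp.\ pulled) first in every piece of the resulting subdivision, apply the inductive hypothesis there, and reassemble; the delicate point is the bookkeeping $\sum_j z^{\min}_k(P_j)=z^{\min}_k(V)$, which says that the pieces with $v^k$ deleted tile $\conv(W)$.) Everything else, in both cases, is a routine assembly from Lemmas \ref{lemma-L1}--\ref{lemma-L3} and the $z^{\max}_i$/$z^{\min}_i$ lemmas.
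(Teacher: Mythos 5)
Your argument is correct in substance, but it takes a route that is quite different from the paper's, so let me compare the two.

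The paper proves Theorem~\ref{thm-BIG} by induction on $|V|$: after peeling off the degenerate cases (where $v^k$ is a dual candidate, or where $v^1$ yields the trivial subdivision), it first shows, using the inductive hypothesis inside each cell of the subdivision produced by $v^1$, that $v^k$ may be moved to \emph{second} position in the order (Lemmas~\ref{lemma-part1A} and~\ref{lemma-part1B}); it then proves that the first two vertices may be \emph{swapped}, handling the four pull/push combinations separately via the detailed facet bookkeeping ($\mathcal{F}_1,\dots,\mathcal{F}_{12}$, Type~$1$/Type~$2$/Type~$12$, $\mathcal{F}_i$/$\overline{\mathcal{F}_i}$) in Lemmas~\ref{lemma-BIGpartII1}--\ref{lemma-BIGpartII4}. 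Your proof is essentially non-inductive: in the pull case you characterize $T$ directly as $\bigcup_F v^k\ast(T|_F)$ over facets $F$ of $V$ omitting $v^k$, and match this, via Lemmas~\ref{lemma-L1}--\ref{lemma-L3}, against what pulling $v^k$ first produces --- a much shorter and quite elegant argument that bypasses the paper's entire swap machinery. In the push case you reduce, by the analogous structural analysis of the ear, to the single statement $T_W=T_W'$, and settle it by invoking the regular/lexicographic realization from Section~\ref{section-pre-reg} (\cite{LEE91}) and observing that a lifted point which supports no lower face over $\mathrm{int}\,\conv(W)$ may be deleted without disturbing the lower envelope there. The paper never appeals to the lifting realization inside the proof of Theorem~\ref{thm-BIG}; your use of it buys you a conceptually cleaner push case at the price of relying on the (somewhat informal, but standard and cited) $|\varepsilon_1|\gg\cdots\gg|\varepsilon_n|$ construction and the minor tacit point that the $\varepsilon_i$ chosen for $V$ also serve for the subconfiguration $W$. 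Your parenthetical ``alternative'' inductive sketch at the end is in fact close to the paper's actual argument (its bookkeeping $\sum_j z^{\min}_k(S_j)=z^{\min}_k(V)$ is exactly what Lemma~\ref{lemma-part1B} establishes), so you correctly identified both avenues and chose the one the paper did not take. One small point of care in your write-up: the fact that the $\sigma$'s opposite $v^k$ in ear simplices land in visible facets $G$ of $W$ and are $(d-1)$-simplices of $T_W|_G$ (and conversely) deserves a sentence more than ``arguing exactly as in the pull case,'' since those $\sigma$'s are now interior to $\conv(V)$ and one must check the second incident $d$-simplex lies in $T_W$; the check is routine but not identical to the boundary argument in the pull case.
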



The proof will be obtained through a sequence of results spread over the next few sections.
We use induction on $|V|$; the base case is straightforward.  But first we handle some easy cases.

\begin{lemma}
The result of Theorem~\ref{thm-BIG} holds if $v^k$ is both a candidate to be pulled first and a candidate to be pushed first in $T$.  
\end{lemma}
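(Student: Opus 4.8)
The plan is to show that the hypothesis is very restrictive and then read off the conclusion from Lemma~\ref{lemma-L3}. Since $v^k$ is a candidate at all, it lies in $V$, so being simultaneously a candidate to be pulled first and a candidate to be pushed first means $z^{\max}_k(V)=z_k(T)=z^{\min}_k(V)$, i.e. $\vol(\conv(V))=\vol(\conv(V))-\vol(\conv(V-\{v^k\}))$. Hence $\vol(\conv(V-\{v^k\}))=0$, so $\dim(\conv(V-\{v^k\}))<d$. Because $\conv(V)$ is $d$-dimensional and adjoining one point to a set raises affine dimension by at most one, we must have $\dim(\conv(V-\{v^k\}))=d-1$; that is, $V$ is a pyramid with apex $v^k$. (In particular $v^k$ is an extreme point of $V$, so the degenerate possibility $z^{\min}_k(V)=0$ does not arise here.)

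Given that $V$ is a pyramid with apex $v^k$, the conclusion is essentially immediate from Lemma~\ref{lemma-L3}: a lexicographic triangulation of a pyramid with apex $v$ can be obtained by pulling or pushing $v$ at any point in the order, and equals the set of pyramids over the $(d-1)$-simplices of the induced lexicographic triangulation of the base. Apply this with $v=v^k$. Moving $v^k$ to the front changes the order $v^1,\dots,v^n$ to $v^k,v^1,\dots,v^{k-1},v^{k+1},\dots,v^n$ but leaves the relative order of the remaining points — and hence the order \emph{induced on the base} $V-\{v^k\}$, together with the pull/push assignment of those points — unchanged. So both orders triangulate the base identically, and therefore produce the same pyramidal triangulation $T$ of $V$.

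Finally, one must fix the pull/push label carried by $v^k$ itself in the new order. Theorem~\ref{thm-BIG} permits declaring $v^k$ either pulled or pushed, since it is a candidate for both; and by the remark that pulling or pushing the apex of a pyramid leaves any subdivision unchanged, the first step of the construction in the order $v^k,v^1,\dots$ has no effect on the trivial subdivision no matter which operation is chosen. Thus either choice is consistent with Lemma~\ref{lemma-L3} and yields $T$, completing the argument. I do not expect a serious obstacle here: the only real content is the volume computation forcing the pyramid structure, after which Lemma~\ref{lemma-L3} does all the work.
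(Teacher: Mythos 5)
Your proof is correct and follows essentially the same route as the paper: derive $\vol(\conv(V-\{v^k\}))=0$ from $z^{\max}_k(V)=z^{\min}_k(V)$, conclude that $V$ is a pyramid with apex $v^k$, and then invoke Lemma~\ref{lemma-L3} to move $v^k$ to the front and assign it either pull or push. You spell out a couple of steps the paper leaves implicit (that the dimension drops to exactly $d-1$, and that the pull/push label on $v^k$ is immaterial), but the argument is the same.
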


\begin{proof}
In this case $z^{\max}_k(V)=z^{\min}_k(V)$, so $\vol(\conv(V-\{v\}))=0$.  Thus $V$ is a pyramid with apex $v^k$, and $v^k$ can be pushed or pulled in any order.  In particular, it can be moved to the first position in the order and either pulled or pushed first.
\end{proof}

\begin{lemma}
The result of Theorem~\ref{thm-BIG} holds if pulling/pushing $v^1$ results in the trivial subdivision of $V$. 
\end{lemma}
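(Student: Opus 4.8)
The plan is to first observe that the hypothesis pins down the geometry of $V$ almost completely. A short case analysis of the pulling and pushing rules shows that the operation on $v^1$ can return the trivial subdivision $\{V\}$ only when $V$ is a pyramid with apex $v^1$: otherwise, i.e.\ when $\dim(\conv(V-\{v^1\}))=d$, pulling $v^1$ produces a cell $F\cup\{v^1\}$ for some facet $F$ of $V$ not containing $v^1$ (and such an $F$ exists, since no vertex, and a fortiori no non-vertex, of $V$ lies on every facet), while pushing $v^1$ produces the cell $V-\{v^1\}$; in either case the result is a subdivision other than $\{V\}$, because $\conv(V-\{v^1\})$ being $d$-dimensional forbids $V=F\cup\{v^1\}$. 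So assume from now on that $V$ is a pyramid with apex $v^1$, and put $B:=V-\{v^1\}$, a $(d-1)$-polytope. If $k=1$, the prescribed reordering is literally the original order $v^1,v^2,\dots,v^n$, so there is nothing to prove (equivalently, $z^{\max}_1(V)=\vol(\conv(V))=z^{\min}_1(V)$, so $v^1$ is a candidate to be pushed first and to be pulled first, and the previous lemma applies).

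So suppose $k\neq 1$, hence $v^k\in B$. By Lemma~\ref{lemma-L3}, $T$ is exactly the set of pyramids with apex $v^1$ over the $(d-1)$-simplices of the induced lexicographic triangulation $T_B$ of $B$, where $T_B$ is obtained from the order $v^2,\dots,v^n$ with each $v^i$ keeping its pull/push role. Every $d$-simplex of $T$ is such a pyramid of one common height $h$ (the distance from $v^1$ to $\mathrm{aff}(B)$), so $z_i(T)=(h/d)\,z_i^B(T_B)$ for each $v^i\in B$, where $z_i^B$ denotes the GKZ-component computed in $\BR^{d-1}$. Since moreover $V-\{v^i\}$ is again a pyramid with apex $v^1$, now over $B-\{v^i\}$, the same scaling yields $z^{\max}_i(V)=(h/d)\,z^{\max}_i(B)$ and $z^{\min}_i(V)=(h/d)\,z^{\min}_i(B)$. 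Dividing through by $h/d>0$, we conclude that $v^k$ is a candidate to be pulled first (respectively, pushed first) in $T$ if and only if it is a candidate to be pulled first (respectively, pushed first) in $T_B$.

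Now run the induction: $|B|=n-1<n$, so Theorem~\ref{thm-BIG} applies to $B$ and $T_B$, giving that $T_B$ is the lexicographic triangulation of $B$ produced by the order $v^k,v^2,\dots,v^{k-1},v^{k+1},\dots,v^n$, with each $v^i$ ($i\neq 1,k$) keeping its role and $v^k$ pulled (respectively, pushed) according to which candidate it is. Finally, apply Lemma~\ref{lemma-L3} in the other direction to the order $v^k,v^1,v^2,\dots,v^{k-1},v^{k+1},\dots,v^n$ on $V$, giving $v^1$ (the apex) its original role, which the lemma tells us is immaterial: the lexicographic triangulation of $V$ built from this order is the set of pyramids with apex $v^1$ over the lexicographic triangulation of $B$ given by the induced order, and that induced order is exactly $v^k,v^2,\dots,v^{k-1},v^{k+1},\dots,v^n$, whose lexicographic triangulation is $T_B$. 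Hence this triangulation of $V$ is the collection of pyramids over $T_B$, namely $T$ — which is precisely the conclusion of Theorem~\ref{thm-BIG}.

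The only step that requires any real thought is the volume-scaling identification of the ``candidate to be pulled/pushed first'' property in $V$ with the corresponding property in the base $B$; the remainder is bookkeeping with Lemma~\ref{lemma-L3} and the inductive hypothesis (the one-dimensional instance, where $B$ is a single point, being subsumed in the overall base case).
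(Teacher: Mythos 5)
Your proof is correct and follows essentially the same route as the paper's: identify $V$ as a pyramid with apex $v^1$, pass to the induced lexicographic triangulation $T_B$ of the base $B=V-\{v^1\}$ via Lemma~\ref{lemma-L3}, use the proportionality $z_i(T)=(h/d)\,z_i^B(T_B)$ (and the matching scaling of $z_i^{\max}$ and $z_i^{\min}$) to show that $v^k$ is a candidate to be pulled/pushed first in $T$ if and only if it is in $T_B$, apply the inductive hypothesis to $B$, and then use Lemma~\ref{lemma-L3} again to reinsert $v^1$ in second position. You are somewhat more explicit than the paper --- e.g., you actually argue that the hypothesis forces $V$ to be a pyramid (the paper states this without comment), you dispose of the $k=1$ case, and you verify that $V-\{v^i\}$ is still a pyramid with apex $v^1$ so the scaling applies to $z^{\min}_i$ as well --- but these are elaborations, not a different proof.
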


\begin{proof} 
In this case $v^1$ is the apex of a pyramid with  base  $V-\{v^1\}$.  By Lemma ~\ref{lemma-L2}, the base can be triangulated via the induced order $v^2, \dots, v^n$. Let $T'$ be the  triangulation of the base given by this order. We claim  $v^k$ is a candidate to be pulled (pushed) first in the induced triangulation $T'$ of the base $V-\{v^1\}$.  This is true since the $d$-volumes of the $d$-simplices of $T$ are proportional to the
$(d-1)$-volumes of their respective base $(d-1)$-simplices of $T'$, so $z_k(T)$ is proportional to $z_k(T')$ for $k\not=1$ with the same constant of proportionality.  We note use  that the $d$-simplices in $E(T,v^k)$ are pyramids with apex $v^1$ over the $(d-1)$-simplices in $E(T',v^k)$. 
Thus, if $v^k$ is a candidate to be pulled (respectively, pushed) first in $T$ then it is a candidate to be pulled (pushed) first in the triangulation $T'$ of the base. 

Applying the inductive hypothesis to the triangulation $T'$ of the base $V-\{v^1\}$ given by the  order $v^2, \dots, v^n$, we obtain $T'$ via the order 
$v^k, v^2, \dots, v^{k-1}, v^{k+1}, \dots, v^n$, where $v^k$ is actually pulled (pushed).  By Lemma \ref{lemma-L3} we have that $T$ is defined  by $T'$ and $v^1$ can be pulled (respectively, pushed) at any point in the order. Thus, we may pull (push) $v^1$ second; $T$ can be obtained via the order $v^k, v^1, v^2, \dots, v^{k-1}, v^{k+1}, \dots, v^n$, where $v^k$ is pulled (pushed) if it was a candidate to be pulled (pushed) first in $T$.  
\end{proof}

From now on we assume that $v^k$ is not a dual candidate and also that pulling/pushing $v^1$ results in a nontrivial subdivision of $V$.
In Section~\ref{second} we show that a candidate to be pulled or pushed first can be moved to the second position.
In Section~\ref{swap} we then prove that the first two vertices in the order can be swapped.

\section{Moving to Second Position}
\label{second}

Lemma~\ref{lemma-part1A} will show  $v^k$ can be moved to the second position in the order when $v^k$ is a candidate to be pulled first.

\begin{lemma}
Assume the hypotheses of Theorem~\ref{thm-BIG}.  Suppose pulling (pushing) $v^1$ gives a nontrivial subdivision of $V$ and $v^k$ is a candidate for
pulling first in $T$.  Then $T$ can be obtained by the order $v^1, v^k, v^2, \dots, v^{k-1}, v^{k+1}, \dots, v^n$, where $v^i, ~i \neq k,$  is pulled (pushed) if it was pulled (pushed) in the original order and $v^k$ is actually pulled.
\label{lemma-part1A}
\end{lemma}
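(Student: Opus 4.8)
The key point is that pulling $v^1$ first and then pulling $v^k$ second produces a triangulation in which $v^k$ still achieves $z_k = z^{\max}_k(V)$, and that this forces it to be the same triangulation $T$. First I would apply the first-step refinement: pulling $v^1$ in the trivial subdivision yields a nontrivial generalized ear subdivision $S = \{S_1, \dots, S_m\}$ of $V$, each $S_j$ a pyramid with apex $v^1$, and by Lemma~\ref{lemma-L1} the triangulation $T$ refines $S$ by pulling/pushing $v^2, \dots, v^n$ in the induced order on each $S_j$. Next, observe that since $v^k$ is a candidate to be pulled first in $T$, i.e.\ $z_k(T) = z^{\max}_k(V) = \vol(\conv(V))$, the point $v^k$ lies in every $d$-simplex of $T$ (Lemma, the ``easy result''); in particular $v^k$ is a vertex of every $S_j$, so $v^k$ is a vertex of the base $B_j := S_j - \{v^1\}$ for each $j$ that is $d$-dimensional (and $v^k \ne v^1$ since pulling $v^1$ gave a nontrivial subdivision while pulling $v^k$ would make it trivial only if $V$ were a pyramid with apex $v^k$ — but that is the dual-candidate case we have excluded, so $v^k$ is a genuine interior-to-the-order vertex).

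\textbf{The induction step.} Now consider modifying the order to $v^1, v^k, v^2, \dots$. Pulling $v^1$ first still gives $S$. I want to pull $v^k$ second within each $S_j$ and show the resulting refinement agrees with $T$ on each $S_j$. For this I would use Lemma~\ref{lemma-L3}: each $S_j$ is a pyramid with apex $v^1$, so $T$ restricted to $S_j$ is given by pyramids over the induced lexicographic triangulation of the base $B_j$, which by Lemma~\ref{lemma-L1}/\ref{lemma-L2} is the triangulation $T'_j$ of $B_j$ obtained by pulling/pushing $v^2, \dots, v^n$ in the induced order. Since $v^k$ is in every $d$-simplex of $T|_{S_j}$, it is in every $(d-1)$-simplex of $T'_j$, so $z_k(T'_j) = z^{\max}_k(B_j) = \vol(\conv(B_j))$, meaning $v^k$ is a candidate to be pulled first in $T'_j$. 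Because $|B_j| < |V|$ (as $v^1 \notin B_j$), the inductive hypothesis applies: $T'_j$ is obtained by the reordering $v^k, (\text{remaining induced order})$ with $v^k$ pulled. Hence $T|_{S_j}$, being pyramids over $T'_j$, is obtained from $S_j$ by pulling $v^1$ (trivially, since $v^1$ is the apex — Lemma~\ref{lemma-L3}), then pulling $v^k$, then the rest. Assembling over all $j$ and invoking Lemma~\ref{lemma-L1} in reverse, $T$ is produced by the global order $v^1, v^k, v^2, \dots, v^{k-1}, v^{k+1}, \dots, v^n$ with the stated pull/push assignments.

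\textbf{Main obstacle.} The subtle point I expect to need care with is handling the $S_j$ that are \emph{not} $d$-dimensional when restricted to the base — i.e.\ when $S_j$ itself is a pyramid whose base $B_j$ is only $(d-1)$-dimensional, so the inductive hypothesis as stated (which requires $\dim(\conv(W)) = d$) doesn't directly apply to $B_j$. But in that situation $S_j$ is a $d$-simplex already partially built, or more precisely the restriction is forced: a lower-dimensional pyramid triangulated lexicographically is handled by Lemma~\ref{lemma-L3} again (the apex can be pulled at any time, and one descends dimension). A second point needing attention is the claim that $v^k \ne v^1$ and that $v^k$ is genuinely among $v^2, \dots, v^n$ in the induced order on each $B_j$ — this uses that $v^k$ is a vertex of $V$ (which follows from $z_k(T) = z^{\max}_k(V) > 0$ together with the excluded trivial/dual cases) and that pulling $v^1$ did not already absorb or remove $v^k$. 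Finally, I should double-check that the ``$v^i$ is pulled (pushed) if it was pulled (pushed) in the original order'' bookkeeping survives the restriction to bases, which is exactly the content of Lemmas~\ref{lemma-L1} and~\ref{lemma-L2} on induced orderings.
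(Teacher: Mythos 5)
Your overall strategy — push/pull $v^1$ first to obtain $S=\{S_1,\dots,S_m\}$, observe that $z_k(T)=\vol(V)$ forces $z_k$ to be maximal in each piece, apply the inductive hypothesis within each piece, and then use Lemma~\ref{lemma-L3} to move $v^1$ back to the front — is the same strategy the paper uses, and the fact that $z_k(T)=\sum_i z_k(T_i)\leq \sum_i\vol(S_i)=\vol(V)$ must hold with equality in each summand is exactly the right key observation. However, there is a genuine structural gap in how you set up the induction.

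The paper applies the inductive hypothesis of Theorem~\ref{thm-BIG} directly to the triangulation $T_i$ of $S_i$: each $S_i$ is a \emph{$d$-dimensional} point set with $|S_i|<|V|$ (strictness follows from nontriviality of $S$), so the theorem applies as stated, and then it separately handles the two cases $v^1\in S_i$ (where $S_i$ is a pyramid with apex $v^1$, so Lemma~\ref{lemma-L3} lets $v^1$ move freely) and $v^1\notin S_i$ (where pulling/pushing $v^1$ is a no-op on $T_i$). You instead descend to the base $B_j=S_j-\{v^1\}$ and apply induction there. This creates two problems. First, when $v^1$ is pulled, every $B_j$ is a facet of $V$ and so is only $(d-1)$-dimensional; the inductive hypothesis requires a full-dimensional configuration, and while an ambient-dimension reduction can repair this, you flag it as an ``obstacle'' and leave it unresolved, whereas the paper's choice of $S_i$ over $B_j$ makes the issue disappear entirely. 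Second, and more seriously, your claim ``each $S_j$ a pyramid with apex $v^1$'' is simply false when $v^1$ is \emph{pushed}: one of the sets of $S$ is $V-\{v^1\}$, which does not contain $v^1$ and is not a pyramid over any base with apex $v^1$. Your base-descent framework has no way to handle that set, so the pushed-$v^1$ half of the lemma (which the parenthetical ``(pushing)'' in the statement explicitly covers) is not addressed. (Your terminology also slips: pulling $v^1$ does not yield a ``generalized ear subdivision'' in the paper's sense — that is the structure obtained by pushing.) The fix is exactly the paper's: induct on $T_i$ over $S_i$ rather than over the base, and split the reassembly step on whether $v^1\in S_i$.

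Separately, your opening sentence suggests a uniqueness argument (``$v^k$ still achieves $z_k=z^{\max}_k(V)$, and this forces it to be the same triangulation'') that does not follow: having the maximal $k$-th GKZ coordinate does not by itself determine the triangulation. The body of your proof doesn't actually rely on this claim, so it is harmless, but it should be deleted.
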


\begin{proof}
Since $v^k$ is a candidate for pulling first, we have $z_k(T)={\mathrm{vol}} (V)$.  Let $ S=\{S_1, \dots, S_m\}$ be the nontrivial subdivision obtained by pulling (pushing) $v^1$ in the trivial subdivision. Since $v^k$ must be a vertex of each $d$-simplex of $T$, we have $v^k \in S_i$, $i=1, \dots, m$.  Now, let $T_i$ be the triangulation of $S_i$, $i=1, \dots, m$, induced by $T$, and $\phi(T_i)$ be the induced GKZ-vector (so that  if $v^j \notin S_i$ then $z_j(T_i)=0$). Then, for each $i$, $$z_k(T_i)  \leq  \max \{z_k(R): \text{$R$ is a triangulation of $S_i$} \} =  {\mathrm{vol}} (S_i).$$  So, $$z_k(T)  = \displaystyle \sum_{i=1}^{m} z_k(T_i)  \leq  \displaystyle \sum_{i=1}^{m} {\mathrm{vol}} (S_i) =  {\mathrm{vol}} (V).  $$
Since $z_k(T)={\mathrm{vol}}(V)$, we must have $z_k(T_i) = {\mathrm{vol}} (S_i)$ for all $i$.  Therefore, $v^k$ is a candidate to be pulled first in each $T_i$. We apply the induction hypothesis to the induced order for each $T_i$.  Thus each $T_i$ can be obtained via the order induced by $v^k, v^1, \dots, v^{k-1}, v^{k+1}, \dots, v^n$ (if $v^j \notin S_i$, then $v^j$ is not in the induced order). 

Recall that the $S_i$ are created  by first pulling/pushing $v^1$.  Thus, to insure we obtain the same $S_i$, we must have $v^1$ first in the order.
If $v^1 \notin S_i$, then pushing or pulling $v^1$ will not affect $T_i$. So, such $T_i$ may be obtained via the order $v^1, v^k, v^2,\dots, v^{k-1}, v^{k+1}, \dots, v^n$ with $v^k$ actually pulled.   Suppose $v^1 \in S_i$.  Since we obtained $S$ by pulling/pushing $v^1$ first, each $S_i$ is a pyramid with apex $v^1$ and base $S_i -\{v^1\}$.  By Lemma~\ref{lemma-L3}, $v^1$ may be pushed or pulled  at any time in the induced order for $T_i$.  Thus, the triangulation $T_i$ of each $S_i$ containing $v^1$  can be obtained via the order $v^1, v^k, v^2,\dots, v^{k-1}, v^{k+1}, \dots, v^n$ with $v^k$ pulled. 

To obtain $T$, we first pull/push $v^1$ and obtain $S$.  We then continue by subdividing each $S_i \in  S$ by the induced order of $T$.  Thus, since each $S_i$ can be triangulated by pulling $v^k$ next (with the rest of the order unchanged), $T$ can be obtained via the order $v^1, v^k, v^2,\dots, v^{k-1}, v^{k+1}, \dots, v^n$.  
\end{proof}

Next we show we can move $v^k$ up to the second position when $v^k$ is a candidate to be pushed first.

\begin{lemma}
Assume the hypothesis of Theorem~\ref{thm-BIG}.  Suppose pulling (pushing) $v^1$ gives a nontrivial subdivision of $V$ and $v^k$ is a candidate for pushing first in $T$.  Then $T$ can be obtained by the order $v^1, v^k, v^2, \dots, v^{k-1}, v^{k+1}, \dots, v^n$, where $v^i, ~i \neq k,$  is pulled (pushed) if it was pulled (pushed) in the original order and $v^k$ is actually pushed.
\label{lemma-part1B}
\end{lemma}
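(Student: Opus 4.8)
\section*{Proof proposal for Lemma~\ref{lemma-part1B}}

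The plan is to run the argument of Lemma~\ref{lemma-part1A} with $z^{\min}$ in place of $z^{\max}$. Let $S=\{S_1,\dots,S_m\}$ be the nontrivial subdivision obtained by pulling (pushing) $v^1$ in the trivial subdivision, let $T_i$ be the triangulation of $S_i$ induced by $T$, and let $\phi(T_i)$ be its GKZ-vector (so $z_j(T_i)=0$ when $v^j\notin S_i$). Since $T$ refines $S$ we have $z_k(T)=\sum_{i=1}^m z_k(T_i)$, and since $z^{\min}_k(S_i)$ is the minimum of $z_k$ over all triangulations of $S_i$, we have $z_k(T_i)\ge z^{\min}_k(S_i)$ for every $i$. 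Note also that, since $v^k$ is not a dual candidate, $\conv(V-\{v^k\})$ is $d$-dimensional (otherwise $z^{\min}_k(V)=z^{\max}_k(V)$).

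The crux is the identity $\sum_{i=1}^m z^{\min}_k(S_i)=z^{\min}_k(V)$. Writing the defining formula uniformly as $z^{\min}_k(S_i)=\vol(\conv(S_i))-\vol(\conv(S_i-\{v^k\}))$ (which is valid whether or not $v^k\in S_i$), we get
\[
\sum_{i=1}^m z^{\min}_k(S_i)=\vol(\conv(V))-\sum_{i=1}^m\vol(\conv(S_i-\{v^k\})).
\]
For $i\neq j$ the set $S_i\cap S_j$ is a \emph{proper} common face of $S_i$ and $S_j$, so $\conv(S_i-\{v^k\})\cap\conv(S_j-\{v^k\})\subseteq\conv(S_i\cap S_j)$ has dimension $<d$; hence the bodies $\conv(S_i-\{v^k\})$ have pairwise disjoint interiors and $\sum_i\vol(\conv(S_i-\{v^k\}))=\vol\bigl(\bigcup_i\conv(S_i-\{v^k\})\bigr)$. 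As $S_i-\{v^k\}\subseteq V-\{v^k\}$ for each $i$, this union lies inside $\conv(V-\{v^k\})$, so $\sum_i\vol(\conv(S_i-\{v^k\}))\le\vol(\conv(V-\{v^k\}))$ and therefore $\sum_i z^{\min}_k(S_i)\ge z^{\min}_k(V)$. Combined with $z^{\min}_k(V)=z_k(T)=\sum_i z_k(T_i)\ge\sum_i z^{\min}_k(S_i)$, all inequalities become equalities; in particular $z_k(T_i)=z^{\min}_k(S_i)$ for every $i$, so $v^k$ is a candidate to be pushed first in each $T_i$ with $v^k\in S_i$.

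The remainder copies Lemma~\ref{lemma-part1A}. Since $S$ is nontrivial, $|S_i|<n$, so for each $S_i$ containing $v^k$ the inductive hypothesis gives $T_i$ by moving $v^k$ to the front of its induced order and pushing it, leaving the other assignments unchanged; for $S_i$ not containing $v^k$ there is nothing to move. Now rebuild $T$ via the order $v^1,v^k,v^2,\dots,v^{k-1},v^{k+1},\dots,v^n$: pulling (pushing) $v^1$ first recreates $S$, and by Lemma~\ref{lemma-L1} each $S_i$ is then triangulated by the induced order. If $v^1\notin S_i$, that induced order agrees with the one above and reproduces $T_i$; if $v^1\in S_i$, then $S_i$ is a pyramid with apex $v^1$, so by Lemma~\ref{lemma-L3} we may reposition $v^1$ freely, and placing it first followed by $v^k$ again reproduces $T_i$. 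Hence $T$ is obtained via the stated order with $v^k$ pushed and all other push/pull assignments unchanged.

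I expect the main obstacle to be the identity $\sum_i z^{\min}_k(S_i)=z^{\min}_k(V)$: in Lemma~\ref{lemma-part1A} the analogous bound is immediate from $\sum_i\vol(\conv(S_i))=\vol(\conv(V))$, whereas here one must pass to the contracted cells $\conv(S_i-\{v^k\})$, which need not form a subdivision of $\conv(V-\{v^k\})$, and extract exactly what is needed — additivity of volume from the proper-face intersection property of $S$, and containment of their union in $\conv(V-\{v^k\})$ — together with the observation that $v^k$ may be absent from some cells, which (unlike the pulling case) causes no difficulty.
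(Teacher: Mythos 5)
Your proof is correct, and the opening paragraphs and the closing induction step mirror the paper's structure. The one place you diverge is in how you establish the key equalities $z_k(T_i)=z^{\min}_k(S_i)$. The paper argues by contradiction: it assumes some $z_k(T_j)>z^{\min}_k(S_j)$, then constructs a specific competing triangulation $T'$ (pull/push $v^1$, then push every remaining point starting with $v^k$) satisfying $z_k(T')=\sum_i z^{\min}_k(S_i)<z_k(T)$, contradicting that $z_k(T)=z^{\min}_k(V)$ is the global minimum. You instead establish $\sum_i z^{\min}_k(S_i)\geq z^{\min}_k(V)$ directly, by writing $z^{\min}_k(S_i)=\vol(\conv(S_i))-\vol(\conv(S_i-\{v^k\}))$, noting the contracted cells $\conv(S_i-\{v^k\})$ have pairwise disjoint interiors (via the proper-face intersection property of the subdivision) and all lie in $\conv(V-\{v^k\})$, and then chaining the resulting inequality against $z^{\min}_k(V)=z_k(T)=\sum_i z_k(T_i)\geq\sum_i z^{\min}_k(S_i)$. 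Both routes are sound and come to the same intermediate fact; the paper's is combinatorial, producing an explicit witness triangulation and invoking global minimality, while yours is a volume-accounting argument that avoids constructing any new triangulation and makes the geometric reason for the bound transparent (you correctly note the contracted cells need not tile $\conv(V-\{v^k\})$, only sit inside it with disjoint interiors, and that the inequality direction is all you need). Your explicit handling of cells from which $v^k$ is absent is a nice touch that the paper leaves implicit in the convention $z^{\min}_k(S_i)=0$.
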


\begin{proof}
Let the nontrivial subdivision obtained by pulling (pushing) $v^1$ be given by $\{S_1, \dots, S_m\}$. Let $T_i$ be the triangulation of $S_i$, $i=1, \dots, m$, induced by $T$, and let $\phi(T_i)$ be the GKZ-vector of $T_i$ induced by the GKZ-vector of $T$ (so that if $v^j \notin S_i$, then $z_j(T_i)=0$).   Recall $z^{\min}_k(S_i):=\min \{z_k(R): \text{$R$ is a triangulation of $S_i$} \}$. Note that we have $z_k(T_i) \geq z^{\min}_k(S_i), ~i=1, \dots, m$.  We claim $z_k(T_i)=z^{\min}_k(S_i), ~i=1, \dots, m$.  

Assume there is a $j$ such that $z_k(T_j) >z^{\min}_k(S_j)$.  Now triangulate each $S_i$, $i=1, \dots , m,$ using the order induced by $v^k, v^2, \dots, v^{k-1}, v^{k+1}, \dots, v^n$ in which all points are pushed.  Since $v^k$ is pushed first in each $S_i$, the triangulation $T'_i$ of $S_i$ has $$z_k(T_i') =\min \{z_k(R):  \text{$R$ is a triangulation of $S_i$} \}=z^{\min}_k(S_i).$$  Let $T'$ be the triangulation of $V$ given by first pulling/pushing $v^1$ and then pushing each point of $v^k, v^2, \dots, v^{k-1}, v^{k+1}, \dots, v^n$.  Then $T'=\bigcup_{i=1}^{m} T_i'$, and $$z_k(T')=\displaystyle \sum_{i=1}^{m}z_k(T_i')=\displaystyle \sum_{i=1}^{m} z^{\min}_k(S_i) <\displaystyle \sum_{i=1}^{m} z_k(T_i)=z_k(T),$$ a contradiction since $v^k$ was a candidate to be pushed first in $T$ and $z_k(T)=\min \{z_k(R):   \text{$R$ is a triangulation of $V$} \}$.  Therefore, $z_k(T_i)=z^{\min}_k(S_i)$ for all $i$, and $v^k$ is a candidate to be pushed first in each $S_i$.  Note that $T_i$ is induced  by the order for $T$ (so that if $v^j \notin S_i,$ pulling (pushing) $v^j$ does not change the given subdivision).  Since $|S_i|
< |V|$ for all $i,$ we apply the induction hypothesis and  obtain each $T_i$ via the order $v^k, v^1, v^2, \dots, v^{k-1}, v^{k+1}, \dots, v^n$, with $v^k$ being pushed.  Since $v^1$ is pulled (pushed) first,  each $S_i$ is a pyramid with apex $v^1$ or it does not contain $v^1$. In either case,  $v^1$ may be pulled (pushed) at any point in the order for each $T_i$.   Hence, each $T_i$ can be obtained via the order $v^1, v^k, v^2, \dots, v^{k-1}, v^{k+1}, \dots, v^n$. Since $T= \bigcup_{i=1}^{m} T_i$,  $T$ can be obtained via the  order $v^1, v^k, v^2, \dots, v^{k-1}, v^{k+1}, \dots, v^n$.  
\end{proof}

Note that Lemma~\ref{lemma-part1A} and Lemma~\ref{lemma-part1B} together let us move $v^k$ to the second position in the order (assuming  
pulling or pushing $v^1$ gives a nontrivial subdivision of $V$).    Also, in both cases, if $v^k$ was a candidate for pulling (pushing) first in $T$, when we move $v^k$ to the second position we obtain a  order for $T$ in which $v^k$  is a candidate for pulling (pushing) first in $T$ {\it and} $v^k$ is actually pulled (pushed) second in the order.

\section{Swapping the First Two Vertices}
\label{swap}

The remainder of  Theorem~\ref{thm-BIG} will be proved by looking at the four combinations of pulling and pushing the first two points $v^1$ and $v^2$.  In each case we will show the subdivision obtained by pulling/pushing $v^1$ and then pulling/pushing $v^2$ is the same subdivision obtained by switching the order of $v^1$ and $v^2$.  If these subdivisions  are the same, then the triangulations obtained by the orders $v^1,v^2,v^3, \dots, v^n$ and $v^2,v^1,v^3, \dots, v^n$ will be the same.

\begin{lemma}
Assume the hypotheses of Theorem~\ref{thm-BIG}.  Assume $v^1$ is pulled and $v^2$ is a candidate for pulling first in $T$. Then $T$ can be obtained via the order $v^2, v^1, v^3, \dots, v^n$, where $v^1$  and $v^2$ are pulled and $v^j, j > 2$, is pulled (pushed) if it was pulled (pushed) in the original order.
\label{lemma-BIGpartII1}
\end{lemma}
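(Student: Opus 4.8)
The plan is to show directly that pulling $v^1$ then pulling $v^2$ yields the same subdivision of $V$ as pulling $v^2$ then pulling $v^1$; once the subdivisions agree, Lemma~\ref{lemma-L1} guarantees that refining by the common remaining order $v^3,\dots,v^n$ produces the identical triangulation $T$, so $T$ is obtained by the order $v^2,v^1,v^3,\dots,v^n$ with $v^1,v^2$ pulled. By the remark at the end of Section~\ref{second}, we may assume the order has already been normalized so that $v^2=v^k$ has been moved to second position, is actually pulled second, and is still a candidate to be pulled first in $T$; in particular $z_2(T)=\vol(V)$, so by Lemma~1.4 (the $z^{\max}$ lemma) $v^2$ lies in every $d$-simplex of $T$, hence $v^2$ lies in every set of the subdivision obtained after pulling $v^1$ then $v^2$.

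The key observation is that pulling a point is idempotent and, in the relevant sense, order-insensitive when both points end up in every maximal cell. Concretely: let $S^{(1)}$ be the subdivision from pulling $v^1$ in the trivial subdivision, and let $S^{(12)}$ be the result of then pulling $v^2$ in $S^{(1)}$. Since $v^1$ was pulled first, every set of $S^{(1)}$ is a pyramid with apex $v^1$ over a facet $F$ of $\conv(V-\{v^1\})$ visible from $v^1$ (plus the set $V-\{v^1\}$ if $v^1$ is not a vertex, but that case is excluded since pulling $v^1$ gives a nontrivial subdivision and $v^1$ is in every cell of any refinement). Pulling $v^2$ in such a pyramid cone-refines each cell from $v^2$. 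Symmetrically, let $S^{(2)}$ be the subdivision from pulling $v^2$ first, and $S^{(21)}$ the result of then pulling $v^1$. Because $v^2$ is a candidate to be pulled first in $T$ and hence (by Lemma~1.6) pulling $v^2$ in the trivial subdivision gives a generalized ear subdivision with $v^2$ in every cell — in fact $z_2(T)=z^{\max}_2(V)=\vol(V)$ forces $\conv(V-\{v^2\})$ to have full volume loss only through $v^2$'s cells — I will argue that $S^{(21)}$ consists of exactly the $d$-simplices that one gets by iterated coning from $v^2$ and $v^1$, independent of which is coned first. The cleanest way to see the equality $S^{(12)}=S^{(21)}$ is to identify both with the "lexicographic subdivision obtained by pulling $\{v^1,v^2\}$," i.e. to show each maximal cell of $S^{(12)}$ is $\conv(\{v^1,v^2\}\cup G)$ for a subfacet $G$ of $V-\{v^1,v^2\}$ lying on a facet of $V-\{v^2\}$ visible from $v^2$ and on a facet of $V-\{v^1\}$ visible from $v^1$, with an analogous description for $S^{(21)}$, and these combinatorial conditions are manifestly symmetric in the two points.

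The main obstacle is the bookkeeping of visibility: when we pull $v^1$ first, the facets of $\conv(V-\{v^1\})$ visible from $v^1$ are coned, and then within each such cone we cone from $v^2$; I must check that the resulting subfacets-of-$(V-\{v^1,v^2\})$ that appear are precisely those visible from $v^1$ in $V-\{v^1\}$ and from $v^2$ in $V-\{v^2\}$, using that $v^2$ being a pull-first candidate means $v^2$ is a vertex of $V$ seeing the "right" portion of the boundary. This requires a careful induction on dimension via the pyramid structure (Lemma~\ref{lemma-L3}) and the fact that the base $V-\{v^1\}$ inherits $v^2$ as a pull-first candidate for its induced triangulation, exactly as in the proof of the trivial-subdivision case. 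I would set this up by passing to the base, applying the inductive hypothesis of Theorem~\ref{thm-BIG} there to swap $v^1$ and $v^2$ in the base triangulation, and then lifting back through the pyramid; the delicate point is verifying that the lift of the swapped base order reproduces $S^{(21)}$ on the nose, which I expect to follow from Lemma~\ref{lemma-L1} applied to $S^{(2)}$ together with the idempotence of pulling $v^1$ inside pyramids over $v^1$.
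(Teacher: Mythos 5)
Your overall plan is the same as the paper's: normalize via Section~\ref{second} so that $v^2$ is pulled second, then show directly that pulling $v^1$ and then $v^2$ produces the same subdivision as pulling $v^2$ and then $v^1$, after which Lemma~\ref{lemma-L1} finishes the job. However, the execution goes off the rails at the very first concrete step, and the error propagates.

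You describe the subdivision $S^{(1)}$ obtained by pulling $v^1$ as ``pyramids with apex $v^1$ over facets $F$ of $\conv(V-\{v^1\})$ visible from $v^1$ (plus the set $V-\{v^1\}$ if $v^1$ is not a vertex).'' That is the description of \emph{pushing} $v^1$, not pulling it. Pulling $v^1$ in the trivial subdivision yields $\{F\cup\{v^1\}:F\text{ is a facet of }V\text{ not containing }v^1\}$; no facet of $\conv(V-\{v^1\})$ visible from $v^1$ can appear here, since the supporting hyperplane of such a visible facet has $v^1$ strictly on the wrong side and hence does not support $V$. Because you start from the wrong combinatorial description of $S^{(1)}$, your proposed characterization of the cells of $S^{(12)}$ --- subfacets $G$ of $V-\{v^1,v^2\}$ lying on a facet of $V-\{v^1\}$ visible from $v^1$ and on a facet of $V-\{v^2\}$ visible from $v^2$ --- is a pushing-type criterion and is not the one that arises. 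The paper's argument instead stays entirely inside the facet/subfacet lattice of $V$: the crucial observation is that since $v^2$ is a pull-first candidate, $v^2$ lies in every pyramid $F\cup\{v^1\}$ of $S^{(1)}$, hence every facet of $V$ contains at least one of $v^1,v^2$. From this it follows that the cells of $S^{(12)}$ are exactly $H\cup\{v^1,v^2\}$ where $H$ ranges over the subfacets of $V$ containing neither $v^1$ nor $v^2$ (each such $H$ lies in exactly two facets of $V$, one containing $v^1$ and not $v^2$ and the other the reverse); this description is manifestly symmetric in $v^1,v^2$, giving $S^{(12)}=S^{(21)}$ immediately. That combinatorial symmetry is the idea you are missing. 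Finally, the inductive detour in your last paragraph (pass to the base of a pyramid, swap there, lift back) is not needed for this lemma; the paper proves it by a direct, non-inductive argument, and you would in any case need to justify that $V$ is a pyramid, which it need not be here.
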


\begin{proof}
By the comments following Lemmas~\ref{lemma-part1A} and ~\ref{lemma-part1B}  we may assume $v^2$ is in fact pulled. 

Let $S_1$ be the subdivision obtained by pulling $v^1$ first.  Then $S_1$ is the collection of  pyramids   
$$S_1=\{F \cup \{v^1\}: \text{$F$ is a facet of $V$ not containing $v^1$} \}.$$ 
Since $v^2$ is a candidate for pulling first, it must be in every pyramid of $S_1$.  In particular, $v^2$ is in every facet $F$ of $V$ not containing $v^1$, so every facet of $V$ must contain at least one of $v^1$ and $v^2$.

Take $S_{12}$ to be the subdivision obtained by pulling $v^2$ in $S_1$.  Recall that the triangulation of a pyramid is induced by the triangulation of its
base.  Hence, each facet $F$ above is subdivided in $S_{12}$ into a collection of pyramids having apex $v^2$ and base $H$ a facet of $F$ (subfacet of $V$) not containing $v^2$.

Let $\mathcal{F}$ be the collection of such subfacets $H$: subfacets
of $V$ such that $v^1,v^2\not\in H$ and $H\subset F$ for some facet $F$ of $V$ with $v^1\not\in F$ and $v^2\in F$.  Let $H$ be any subset of $V$ such that $v^1,v^2\not\in H$.  Then, using the fact that every facet of $V$ must contain at least one of $v^1,v^2$,  for the two facets of $V$ containing $H$ it must be the case that exactly one contains $v^1$ but not $v^2$ and the other contains $v^2$ but not $v^1$.  We conclude
$\mathcal{F}=\{H:H$ is a subfacet of $V$ and $v^1,v^2\not\in H\}$
and $$S_{12}=\{H \cup \{v^1,v^2\}: H \in \mathcal{F} \}.$$
Thus the $d$-polytopes in $S_{12}$ are precisely the sets (two-fold pyramids) of the form $H\cup\{v^1,v^2\}$ for $H\in\mathcal{F}$.  

Now let $S_2$  be the subdivision obtained by pulling $v^2$ first,
$$S_2=\{G \cup \{v^2\}: \text{$G$ is a facet of $V$ not containing $v^2$} \},$$
and $S_{21}$ be the subdivision of $V$ obtained by pulling $v^1$ in $S_2$. Noting that every facet of $V$ not containing $v^2$ must contain $v^1$, we have facet $G$ of $S_2$  subdivided in $S_{21}$ into a collection of pyramids having apex $v^1$ and base $H$ a facet of $G$ (subfacet of $V$) not containing $v^1$.  These subfacets $H$ of $V$ are those satisfying $v^1,v^2\not\in H$ and $H\subset G$ for some facet $G$ of $V$ with $v^2\not\in G$ and $v^1\in G$.  By the same argument as before, this set of subfacets is $\mathcal{F}$ and $d$-polytopes in $S_{21}$ are precisely the sets (two-fold pyramids) of the form $H\cup\{v^1,v^2\}$ for $H\in\mathcal{F}$.  

Therefore $S_{12}=S_{21}$ and the result follows.  
\end{proof}


The next step is to show that if $T$ is a lexicographic triangulation of $V$ given by the order $v^1, \dots, v^n$ and  $v^1$ is pushed first, $v^2$ is pulled second, and $v^2$ is a candidate for pulling first, then we can switch the order of $v^1$ and $v^2$ and obtain the same triangulation $T$.  This case is similar to the case where $v^1$ is pulled first and $v^2$ is a candidate for pushing first.  Before we prove either case, we will examine the possible point sets whose convex hulls are $(d-1)$-dimensional. Some of these sets will be the facets of $V$, others will be sets that become important in the subdivisions obtained by pulling/pushing $v^1$ and $v^2$.

  Recall that we have already handled the case when pulling/pushing $v^1$ gives the trivial subdivision.  Thus, ${\mathrm{conv}}(V-\{v^1\})$ is $d$-dimensional, and its facets have dimension $d-1$.   We will denote the shadow of $v^1$ in $V-\{v^1\}$ as $S(v^1)$ and the shadow boundary of the shadow as $\partial S(v^1)$.  

 We examine the sets of dimension $d-1$ in four cases. 
\begin{enumerate}
\item  First, consider the facets $H$  of $V$ that contain the point $v^1$.  Either $H$ is a pyramid with apex $v^1$, or it is not.  If $H$ is a facet of $V$ that is a pyramid with apex $v^1$, then $H= G \cup \{v^1\}$, where $G$ is a $(d-2)$-dimensional set of the shadow boundary; we will write  $G \in \partial S(v^1)$.  We define two sets:
\begin{align*}
\mathcal{F}_1 &:= \{F= G \cup \{v^1\}: F \text{ is a facet of } V, ~G \in \partial S(v^1), ~v^2 \in G\}, \text{ and} \\
\mathcal{F}_2 &:= \{F= G \cup \{v^1\}:  F \text{ is a facet of } V, ~G \in \partial S(v^1), ~v^2 \notin G\}.
\end{align*}

Now suppose $H$ is a facet of $V$ containing $v^1$ that is not a pyramid with apex $v^1$.  We define $\mathcal{F}_3$ to be the set of all facets of $V$
containing both $v^1$ and $v^2$ that are not pyramids with apex $v^1$ and  $\mathcal{F}_4$ to be the set of all facets of $V$ containing $v^1$, but not containing $v^2$, that are not pyramids with apex $v^1$.  Note that these facets $H$ are precisely those that ``split'' when we push $v^1$ in the trivial subdivision.  That is, pushing $v^1$ subdivides $H$ into  two types of sets of dimension $d-1$: $H -\{v^1\}$, and  sets of the form $G \cup \{v^1\}$, where  $G \in \partial S(v^1)$.   We  define four new sets of dimension $d-1$:
\begin{align*} 
\mathcal{F}_5 &:= \{F-\{v^1\}: F \in \mathcal{F}_3 \}, \\
\mathcal{F}_6 &:= \{F-\{v^1\}: F \in \mathcal{F}_4\}, \\
\mathcal{F}_7 &:= \{F=G \cup \{v^1\}: F \in \mathcal{F}_3, ~G \in \partial S(v^1) \}, \\
\mathcal{F}_8 &:= \{F=G \cup \{v^1\} : F \in \mathcal{F}_4, ~G \in \partial S(v^1) \}.
\end{align*}

\item  The second type of $(d-1)$-dimensional sets are facets of $V$ not containing $v^1$.  Note that these facets will also be facets
of $V-\{v^1\}$.  We define two types of sets:
\begin{align*}
\mathcal{F}_9 &:= \{F: \text{$F$ is a facet of $V$}, ~v^1 \notin F, ~v^2 \in F\}, \text{and} \\
\mathcal{F}_{10}&:= \{F: \text{$F$ is a facet of $V$}, ~v^1 \notin F, ~v^2 \notin F\}.
\end{align*}

\item  The third type of $(d-1)$-dimensional sets are facets of $V-\{v^1\}$ that are not facets of $V$.  These are the facets of $V-\{v^1\}$ visible from $v^1$.  We define 
\begin{align*}
\mathcal{F}_{11}&:= \{F: \text{$F$ is a facet of $V-\{v^1\}$ in $S(v^1)$} \}.
\end{align*}
Note that the facets of $V-\{v^1\}$ are precisely the sets of $\mathcal{F}_9 \cup \mathcal{F}_{10}\cup\mathcal{F}_{11}$.
\item  Let $R$ be a $(d-2)$-dimensional face of $S(v^1)-\partial S(v^1)$; that is, $R$ is a facet of a facet in the shadow of $V-\{v^1\}$ visible from $v^1$, but it is not in the shadow boundary.  If $R=F' \cap F''$, where $F',F'' \in \mathcal{F}_{11}$, then $R \cup \{v^1\}$ is $(d-1)$-dimensional.  We define
\begin{align*} 
\mathcal{F}_{12}&:= \{R \cup \{v^1\}: \text{$R=F' \cap F''$, where $F',F'' \in \mathcal{F}_{11}$, $\dim({\mathrm{conv}}(R))=d-2$} \}.
\end{align*}
\end{enumerate}

We are now ready to settle the case where $v^1$ is pushed first, $v^2$ is pulled second, and $v^2$ is a candidate for pulling first.
\begin{lemma}
Assume the hypotheses of Theorem~\ref{thm-BIG}.  Suppose $v^1$ is pushed and $v^2$ is a candidate for pulling first in $T$. Then $T$ can be obtained via the order $v^2, v^1, v^3, \dots, v^n$, where $v^2$ is pulled, $v^1$ is pushed,  and $v^j, ~j > 2$, is pulled (pushed) if it was pulled (pushed) in the original  order.
\label{lemma-BIGpartII2}
\end{lemma}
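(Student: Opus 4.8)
The plan is to follow the template of Lemma~\ref{lemma-BIGpartII1}: I would show that the subdivision $S_{12}$ obtained from the trivial subdivision by first pushing $v^1$ and then pulling $v^2$ equals the subdivision $S_{21}$ obtained by first pulling $v^2$ and then pushing $v^1$; then, since the lexicographic construction continues by applying the same operations for $v^3,\dots,v^n$ to either subdivision, both orders produce the same triangulation, which is $T$. By the remarks following Lemmas~\ref{lemma-part1A} and~\ref{lemma-part1B} we may assume $v^2$ is in fact pulled (second). Since $v^2$ is a candidate to be pulled first, $z_2(T)=\vol(\conv(V))$, so $v^2$ lies in every $d$-simplex of $T$ and hence in every $d$-dimensional member of the subdivision $S_1=\{V-\{v^1\}\}\cup\{F\cup\{v^1\}:F\in\mathcal{F}_{11}\}$ obtained by pushing $v^1$; in particular $v^2\in F$ for every facet $F$ of $V-\{v^1\}$ visible from $v^1$. (If $v^1$ is not a vertex of $\conv(V)$, or if $\mathcal{F}_{11}$ or $\mathcal{F}_4$ is empty, the computation below degenerates and is easier, and I would dispatch those cases first.)

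Next I would write $S_{12}$ out explicitly using the classification of the $(d-1)$-dimensional point sets. Pulling $v^2$ inside $V-\{v^1\}$ cones $v^2$ over the facets of $V-\{v^1\}$ missing $v^2$; as $v^2$ lies in every visible facet, these are exactly the sets of $\mathcal{F}_{10}\cup\mathcal{F}_6$, contributing the pieces $G\cup\{v^2\}$, $G\in\mathcal{F}_{10}\cup\mathcal{F}_6$. Pulling $v^2$ inside each pyramid $F\cup\{v^1\}$, $F\in\mathcal{F}_{11}$, contributes the two-fold pyramids $G\cup\{v^1,v^2\}$ over the facets $G$ of $F$ (equivalently, over the ridges of $V-\{v^1\}$ incident to $F$) with $v^2\notin G$. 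Sorting such a ridge $G$ according to its other incident facet $F^{*}$: if $F^{*}$ is visible, or $F^{*}\in\mathcal{F}_9$, or $F^{*}\in\mathcal{F}_5$, then $v^2\in F^{*}$ as well, hence $v^2$ lies in the ridge $G=F\cap F^{*}$ and $G$ is excluded; if $F^{*}\in\mathcal{F}_{10}$, then $G\in\partial S(v^1)$ and $G\cup\{v^1\}$ is a facet of $V$ (a pyramid with apex $v^1$), so $G\cup\{v^1\}\in\mathcal{F}_2$ and the piece is $(G\cup\{v^1\})\cup\{v^2\}$; and if $F^{*}\in\mathcal{F}_6$, say $F^{*}=F'-\{v^1\}$ with $F'\in\mathcal{F}_4$, the piece is $G\cup\{v^1,v^2\}$ with $G$ a ridge joining a visible facet to $F'-\{v^1\}$. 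Thus $S_{12}$ is the union of $\{G\cup\{v^2\}:G\in\mathcal{F}_{10}\}$, $\{(F-\{v^1\})\cup\{v^2\}:F\in\mathcal{F}_4\}$, $\{F\cup\{v^2\}:F\in\mathcal{F}_2\}$, and the ridge pieces $\{G\cup\{v^1,v^2\}:G$ a ridge of $V-\{v^1\}$ joining a visible facet to some $F'-\{v^1\}$, $F'\in\mathcal{F}_4\}$.

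Then I would compute $S_{21}$. Pulling $v^2$ in the trivial subdivision gives the pieces $F\cup\{v^2\}$ for $F$ a facet of $V$ with $v^2\notin F$, i.e.\ for $F\in\mathcal{F}_{10}\cup\mathcal{F}_2\cup\mathcal{F}_4$; now push $v^1$ in each. Pieces with $F\in\mathcal{F}_{10}$ do not contain $v^1$ and survive unchanged. A piece $F\cup\{v^2\}$ with $F=G\cup\{v^1\}\in\mathcal{F}_2$ is again a pyramid with apex $v^1$, because $(F\cup\{v^2\})-\{v^1\}=G\cup\{v^2\}$ is still $(d-1)$-dimensional (as $v^2\notin\mathrm{aff}(F)\supseteq\mathrm{aff}(G)$), so pushing $v^1$ leaves it unchanged. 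A piece $F\cup\{v^2\}$ with $F\in\mathcal{F}_4$ splits: since $F$ is not a pyramid with apex $v^1$, the set $(F-\{v^1\})\cup\{v^2\}$ is $d$-dimensional, so pushing $v^1$ yields $(F-\{v^1\})\cup\{v^2\}$ together with the pyramids $W\cup\{v^1\}$ over those facets $W$ of the pyramid $(F-\{v^1\})\cup\{v^2\}$ that are visible from $v^1$.

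The heart of the argument, and the step I expect to require the most care, is the visibility analysis in this last case. Fix $F\in\mathcal{F}_4$ and put $P=(F-\{v^1\})\cup\{v^2\}$. The base $F-\{v^1\}$ of $P$ is not visible from $v^1$, since its affine hull $\mathrm{aff}(F)$ contains $v^1$. Every other facet of $P$ has the form $H\cup\{v^2\}$ with $H$ a facet of $\conv(F-\{v^1\})$, equivalently a ridge of $\conv(V-\{v^1\})$ incident to $F-\{v^1\}$ and to exactly one further facet $F^{\sharp}$. Since $v^1\in\mathrm{aff}(F)$ and the hyperplanes $\mathrm{aff}(H\cup\{v^2\})$, $\mathrm{aff}(F^{\sharp})$, and $\mathrm{aff}(F-\{v^1\})=\mathrm{aff}(F)$ all meet $\mathrm{aff}(F)$ in the common $(d-2)$-flat $\mathrm{aff}(H)$, one checks that $v^1$ lies on the far side of $\mathrm{aff}(H\cup\{v^2\})$ from $P$ if and only if $v^1$ lies, within $\mathrm{aff}(F)$, on the side of $\mathrm{aff}(H)$ opposite $\conv(F-\{v^1\})$, which in turn holds if and only if $v^1$ lies on the far side of $\mathrm{aff}(F^{\sharp})$ from $\conv(V-\{v^1\})$; that is, $H\cup\{v^2\}$ is visible from $v^1$ exactly when $F^{\sharp}\in\mathcal{F}_{11}$. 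Hence the pyramids produced from $F\in\mathcal{F}_4$ are precisely the sets $H\cup\{v^1,v^2\}$ over ridges $H$ joining $F-\{v^1\}$ to a visible facet. Comparing with the list for $S_{12}$, the $\mathcal{F}_{10}$-pieces, the pieces $(F-\{v^1\})\cup\{v^2\}$ with $F\in\mathcal{F}_4$, the $\mathcal{F}_2$-pieces, and the ridge pieces match term for term, so $S_{12}=S_{21}$ and the lemma follows.
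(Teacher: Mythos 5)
Your proposal is correct and follows essentially the same strategy as the paper's proof: after dispatching the non-vertex case, compute the subdivision push-$v^1$-then-pull-$v^2$ and the subdivision pull-$v^2$-then-push-$v^1$ using the $\mathcal{F}_i$ classification, and check that both equal $\{F\cup\{v^2\}:F\in\mathcal{F}_2\cup\mathcal{F}_6\cup\mathcal{F}_8\cup\mathcal{F}_{10}\}$. The only real stylistic difference is that where the paper appeals to the pyramid-with-apex-$v^2$ structure to deduce how $F\cup\{v^2\}$ with $F\in\mathcal{F}_4$ splits under pushing $v^1$, you carry out the visibility bookkeeping (which facets of $(F-\{v^1\})\cup\{v^2\}$ are seen by $v^1$) by hand -- a legitimate and slightly more explicit filling-in of the same step.
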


\begin{proof}

The case when $v^1$ is not a vertex of $V$ follows easily, since then it is also not a vertex of the subdivision resulting from pulling $v^2$ first, and pushing it in either case simply removes it from all sets in the subdivision.

Let $W$ be the subdivision obtained from the trivial subdivision by pushing $v^1$.  Then  $$W=\{ V-\{v^1\}\}\cup\{F \cup \{v^1\}: F \in \mathcal{F}_{11} \}.$$
The $(d-1)$-faces of the sets of $W$ are exactly $\mathcal{F}_1 \cup \mathcal{F}_2 \cup \mathcal{F}_5 \cup \cdots \cup \mathcal{F}_{12}.$

Now, let $W'$ be the refinement of $W$ obtained by pulling $v^2$.  Since $v^2$ is a candidate for pulling first,  every $S_i \in  W$ must contain $v^2$. 
We examine all $(d-1)$-faces of the sets of $W$.  Since $v^2$ is in every set of $W$,  in particular we know $v^2 \in F$ for all $ F \in \mathcal{F}_{11}$.  A $(d-2)$-dimensional face $R$ of $S(v^1)-\partial S(v^1)$ is given by $R=F' \cap F''$, where $F',F'' \in \mathcal{F}_{11}$.  Thus, $v^2 \in R$ for every such $R$ and $v^2 \in F$ for all $ F \in \mathcal{F}_{12}$.   Therefore, if $F \in \mathcal{F}_1 \cup \mathcal{F}_5  \cup \mathcal{F}_7  \cup \mathcal{F}_9  \cup \mathcal{F}_{11}  \cup \mathcal{F}_{12}$, then $v^2 \in F$.   Note that by definition, if $F \in \mathcal{F}_2 \cup \mathcal{F}_6 \cup \mathcal{F}_8 \cup \mathcal{F}_{10}$, then $v^2 \notin F$.  Thus, $$W'=\{F \cup \{v^2\}: F \in \mathcal{F}_2 \cup \mathcal{F}_6 \cup \mathcal{F}_8 \cup \mathcal{F}_{10} \}.$$

Now, let $U$ be the refinement of the trivial subdivision obtained by pulling $v^2$.  Then 
\begin{align*}
U & = \{F \cup \{v^2\}: \text{$F$ is a facet of $V$ not containing $v^2$} \} \\
& = \{F \cup \{v^2\}: F \in \mathcal{F}_2 \cup \mathcal{F}_4\cup \mathcal{F}_{10} \}.
\end{align*}

Let $U'$ be the refinement of $U$ obtained by pushing $v^1$.  We want to determine the sets of $U'$ and show $U'=W'$.  So, $\{F \cup \{v^2\}: F \in \mathcal{F}_{10} \} \subseteq U'$ since $F \in \mathcal{F}_{10}$ implies $v^1 \notin F$.  Note that $F\cup \{v^2\}$ is a pyramid with apex $v^2$ and can therefore be subdivided by the induced subdivision of $F$.  If $F \in \mathcal{F}_{2}$, then $F=G \cup \{v^1\}$, where $v^2 \notin G$ and $G$ is a face of dimension $(d-2)$ of the shadow boundary.  Thus, $F$ is a pyramid with apex $v^1$.  Pushing the apex of a pyramid does not change the subdivision.  Thus, $F \cup \{v^2\} \in U'$ for $F \in \mathcal{F}_2$.  Suppose $F \in \mathcal{F}_4$.  Since $F$ is not a pyramid with apex $v^1$, pushing $v^1 \in F$ subdivides $F$ into $F-\{v^1\}$ and sets of the form $\{G \cup \{v^1\}: G \in \partial S(v^1) \}.$  Since $v^2 \notin F$, this subdivision is given precisely by the sets of $\mathcal{F}_6 \cup \mathcal{F}_8$.  So if $F \in \mathcal{F}_4$,  $F \cup \{v^2\}$ is subdivided into sets of the form  $\{ H\cup \{v^2 \}: H \in \mathcal{F}_6 \cup \mathcal{F}_8\}.$  It follows that $$U'=\{F \cup \{v^2\}: F \in \mathcal{F}_2 \cup \mathcal{F}_6 \cup \mathcal{F}_8\cup \mathcal{F}_{10} \}.$$  Thus, $W'=U'$, and we can obtain $T$ by switching the order of $v^1$ and $v^2$.  \end{proof}

We mentioned, when we defined the sets of dimension $d-1$, that the case of Lemma \ref{lemma-BIGpartII2} and the case where $v^1$ is pulled first, $v^2$ is pushed second, and $v^2$ is a candidate to be pushed first, are very similar.  Note that in the proof of Lemma~\ref{lemma-BIGpartII2}  the subdivision $U$ was obtained by pulling $v^2$, and $U'$ was a refinement of $U$ obtained by pushing $v^1$ in $U$.  Finding the sets of both subdivisions did not require using the fact that $v^2$ was a candidate to be pushed first.  Thus, this proof may be used to prove the following lemma.  Note that we have  assumed a different order for $T$ so that we may use the same description of the $\mathcal{F}_i$'s.

\begin{lemma}
Let $V \subset {\mathbb R}^d$ be a finite point set with $\dim ({\mathrm{conv}} (V))=d$.  Suppose $T$ is a lexicographic triangulation of $V$ given by the order $v^2, v^1, v^3, \dots, v^n$.  Assume $v^2$ is pulled first, $v^1$ is pushed second,  and $v^1$ is a candidate for pushing first in $T$. Then $T$ can be obtained via the order $v^1, v^2, v^3, \dots, v^n$, where $v^1$ is pushed, $v^2$ is pulled, and $v^j, j >2,$  is pulled (pushed) if it was pulled (pushed) in the original order.
\label{lemma-BIGpartII3}
\end{lemma}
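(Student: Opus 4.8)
The plan is to follow the route signalled in the paragraph just before the statement: show that the first two steps of the two orders produce the same subdivision of $V$, after which the identical remaining steps force the same triangulation. Write $W$ for the subdivision obtained by pushing $v^1$ in the trivial subdivision, $W'$ for the refinement of $W$ obtained by pulling $v^2$, $U$ for the subdivision obtained by pulling $v^2$ in the trivial subdivision, and $U'$ for the refinement of $U$ obtained by pushing $v^1$. The given order $v^2,v^1,v^3,\dots,v^n$ builds $T$ by applying the induced pulls and pushes of $v^3,\dots,v^n$ to $U'$, while the order $v^1,v^2,v^3,\dots,v^n$ (with $v^1$ pushed, $v^2$ pulled, and each $v^j$, $j>2$, pulled or pushed as before) builds a lexicographic triangulation by applying the very same operations to $W'$. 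As in the other swap lemmas, it therefore suffices to prove that $W'=U'$. Since the part of the proof of Lemma~\ref{lemma-BIGpartII2} dealing with $U$ and $U'$ invokes no hypothesis about candidates, we already have $U'=\{F\cup\{v^2\}:F\in\mathcal{F}_2\cup\mathcal{F}_6\cup\mathcal{F}_8\cup\mathcal{F}_{10}\}$, so the whole task is to identify $W'$.

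I would first dispose of the degenerate configurations, as at the start of the proof of Lemma~\ref{lemma-BIGpartII2}. If $v^1$ is not a vertex of $V$, then pushing $v^1$ merely deletes it from every set of the current subdivision and it is absent from all further refinements, so the two orders give the same triangulation. If $\conv(V-\{v^1\})$ is not $d$-dimensional, then $V$ is a pyramid with apex $v^1$, so by Lemma~\ref{lemma-L3} we may push $v^1$ at any point of the order, in particular first. Hence we may assume that $v^1$ is a vertex of $V$ and that $\conv(V-\{v^1\})$ is $d$-dimensional; then, as recorded in the excerpt, $W=\{V-\{v^1\}\}\cup\{F\cup\{v^1\}:F\in\mathcal{F}_{11}\}$ and the $(d-1)$-faces of the sets of $W$ are exactly $\mathcal{F}_1\cup\mathcal{F}_2\cup\mathcal{F}_5\cup\cdots\cup\mathcal{F}_{12}$.

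The crux, which I expect to be the main obstacle, is to prove that $v^2$ lies in every set of $W$, that is, that $v^2\in F$ for every $F\in\mathcal{F}_{11}$. In Lemma~\ref{lemma-BIGpartII2} this was immediate because $v^2$ was there assumed to be a candidate for pulling first; here it has to be teased out of the two hypotheses we do have. Since $v^1$ is a candidate for pushing first in $T$ we have $z_1(T)=z^{\min}_1(V)$, and since $v^1$ is a vertex this value is positive; hence, by the lemma characterizing the triangulations that attain $\min\{z_1(R):R\text{ a triangulation of }V\}$ when that minimum is positive, $T$ is a generalized ear subdivision of $V$ with ear point $v^1$---equivalently, $T$ refines $W$. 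On the other hand, $v^2$ is pulled first in the given order, so by the earlier observation that pulling a point first in the trivial subdivision places it in every $d$-simplex of every refinement, $v^2$ lies in every $d$-simplex of $T$. Now fix $F\in\mathcal{F}_{11}$. Since $T$ refines $W$ and $\conv(F\cup\{v^1\})$ is $d$-dimensional, the $d$-simplices of $T$ contained in $F\cup\{v^1\}$ triangulate it, so there is at least one such simplex $T_j$; then $v^2\in T_j\subseteq F\cup\{v^1\}$, and since $v^1\ne v^2$ we conclude $v^2\in F$. It follows that $v^2$ also lies in every subfacet $R=F'\cap F''$ with $F',F''\in\mathcal{F}_{11}$, hence in every set of $\mathcal{F}_{12}$, and therefore $v^2$ lies in every set of $\mathcal{F}_1\cup\mathcal{F}_5\cup\mathcal{F}_7\cup\mathcal{F}_9\cup\mathcal{F}_{11}\cup\mathcal{F}_{12}$, whereas by definition $v^2$ belongs to no set of $\mathcal{F}_2\cup\mathcal{F}_6\cup\mathcal{F}_8\cup\mathcal{F}_{10}$.

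Granting that $v^2$ lies in every set of $W$, the identification of $W'$ is then word for word as in the proof of Lemma~\ref{lemma-BIGpartII2}: pulling $v^2$ replaces each set $S\in W$ by the pyramids $G\cup\{v^2\}$ over those facets $G$ of $S$ that do not contain $v^2$, and by the classification above these facets $G$ are exactly the members of $\mathcal{F}_2\cup\mathcal{F}_6\cup\mathcal{F}_8\cup\mathcal{F}_{10}$, so $W'=\{F\cup\{v^2\}:F\in\mathcal{F}_2\cup\mathcal{F}_6\cup\mathcal{F}_8\cup\mathcal{F}_{10}\}=U'$. Since the first two steps of the two orders now coincide and the subsequent operations on $v^3,\dots,v^n$ are identical, the two orders produce the same lexicographic triangulation $T$, which is the assertion. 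Apart from the passage extracting ``$v^2\in\mathcal{F}_{11}$'', the only points I expect to require care are the two degenerate cases, which follow templates already in place.
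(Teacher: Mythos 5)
Your proof is correct and follows the same overall template as the paper's: dispose of the degenerate configurations, identify $U'$ (which the proof of Lemma~\ref{lemma-BIGpartII2} already supplies without using any candidate hypothesis), show $v^2\in F$ for every $F\in\mathcal{F}_{11}$, and deduce $W'=U'$. The genuine divergence is in how you prove the crucial claim $v^2\in F$ for all $F\in\mathcal{F}_{11}$. The paper proceeds by contradiction with a ray-tracing argument: it takes $F\in\mathcal{F}_{11}$ with $v^2\notin F$, shoots a ray from $v^2$ through the relative interior of $\conv(F)$ to a point $w$ in the relative interior of $\conv(F')$ for some $F'\in\mathcal{F}_2\cup\mathcal{F}_8$, locates the pyramid $P\in U$ through which the segment $[v^2,w]$ passes, and shows that after pushing $v^1$ the resulting piece $P'$ forces a $d$-simplex of $T$ containing $v^1$ whose convex hull leaves the ear region $C$, contradicting that $v^1$ is a candidate to be pushed first. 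Your argument bypasses this geometry entirely: the characterization lemma for $z^{\min}$ (applied with $z^{\min}_1(V)>0$, since $v^1$ is a vertex after the degenerate cases are removed) gives immediately that $T$ refines $W$, while pulling $v^2$ first forces $v^2$ into every $d$-simplex of $T$; combining these, every $F\cup\{v^1\}\in W$ contains some $d$-simplex $T_j$ of $T$, and $v^2\in T_j\subseteq F\cup\{v^1\}$ gives $v^2\in F$. This is shorter and, I would say, cleaner: it leans on lemmas the paper already has rather than building a new configuration from scratch, and it makes fully explicit the sub-claim ``$T$ refines $W$'' that the paper asserts but only partially justifies (the paper derives only the weaker statement that the simplices of $T$ containing $v^1$ lie in $C$, which is what its ray argument actually consumes). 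What the paper's version buys in return is independence from the full strength of the $z^{\min}$ characterization lemma; your version buys brevity and a tighter logical chain. Both are valid proofs of the lemma.
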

 
\begin{proof}

The case when $v^1$ is not a vertex of the subdivision resulting from pulling $v^2$ first is easy, since then it is not a vertex of $V$, and pushing it in either case simply removes it from all sets in the subdivision.

Let $U$ be the subdivision obtained by pulling $v^2$, and let $U'$ be the subdivision obtained by refining $U$ by pushing $v^1$ as in the proof of Lemma~\ref{lemma-BIGpartII2}.  We must show  the subdivision obtained by pushing $v^1$ and then pulling $v^2$ is $U'$. Let $W$ be the subdivision obtained from the trivial subdivision by pushing $v^1$:
$$W=\{V-\{v^1\}\}\cup \{F \cup \{v^1\}: F \in \mathcal{F}_{11} \} .$$
We claim $T$ is a refinement of $W$.  Since $v^1$ is a candidate for pushing first in $T$, $z_1(T)={\mathrm{vol}}(C)$, where $C=\bigcup_{\{F \in \mathcal{F}_{11} \} } {\mathrm{conv}} ( F \cup \{v^1\}).$  (Note that $C$ may not be convex\@).  Thus every simplex in $T$ containing $v^1$ must lie in $C$.  

We claim that $v^2\in F$ for every $F\in\mathcal{F}_{11}$.  For suppose there is some $F\in\mathcal{F}_{11}$ such that $v^2\not\in F$. Let $H$ be the hyperplane containing $F$.  Then $v^1$ and $v^2$ lie in opposite open halfspaces of $H$.  There is a ray from $v^2$ intersecting the relative interior of $F$ and containing some point $w$ in the relative interior of $\conv(F')$ for some 
$F'\in \mathcal{F}_2\cup\mathcal{F}_8$. 
Hence there is some pyramid $P$ in $U$ with apex $v^2$ that contains $v^1$ (in its base), $v^2$, and $w$.  Since the line segment joining $v^2$ and $w$ does not lie entirely in $C$, neither does $P$.  Now pushing $v^1$ subdivides $P$ into $d$-polytopes, one of which, $P'$, contains $w$ and $v^2$ in its convex hull.  But $w\in F'$ implies that $w$ must also be in $P'$, which is a two-fold pyramid over $P'-\{v^1,v^2\}$.  Therefore 
the triangulation $T$ subdivides contains a $d$-simplex whose convex hull contains $v^1$, $v^2$ and $w$ and thus is not contained entirely in $C$.  This contradiction establishes the claim.

Now, let $W'$ be the refinement of $W$ obtained by pulling $v^2$.  Since $v^2$ is in every set of $W$, the remainder of this proof follows from the proof in  Lemma~\ref{lemma-BIGpartII2}.  Thus, $$W'=\{F \cup \{v^2\}: F \in \mathcal{F}_2 \cup \mathcal{F}_6 \cup \mathcal{F}_8 \cup \mathcal{F}_{10} \},$$  and $U'= W'$.  Thus, the order of the first two points   may be switched.
\end{proof}


We now turn to the last case of Theorem~\ref{thm-BIG} where $v^1$ and $v^2$ are pushed and $v^2$ is a candidate for pushing first.  We begin by defining a few types of facets.  First, let $V_1=V-\{v^1\}$, $V_2=V-\{v^2\}$, $V_{12}= V-\{v^1,v^2\}$, and $Q_i={\mathrm{conv}}(V_i)$, for $i=1,2,12$.

If $F$ is a facet of $V_{12}$ visible from $v^1$ (respectively, $v^2$) but not visible from $v^2$ (respectively, from $v^1$), then we will say $F$ is a {\it Type 1} (or {\it Type 2}) facet of $V_{12}$.  If $F$ is a facet of $V_{12}$ visible from both $v^1$ and $v^2$, then we will say  $F$ is a {\it Type 12} facet of $V_{12}$.  Note that a Type 1 (or Type 2) facet need not be a facet of $V_1$ ($V_2$), but  it  must be contained in a facet of $V_1$ ($V_2$).

Note that if $\dim (Q_{12})=d-1$, then the affine span of $Q_{12}$ is a hyperplane, which defines two open halfspaces. We will call these open half spaces the positive and negative ``sides'' of $V_{12}$.  We call  a ``side'' of $V_{12}$  a Type 1 (or Type 2, or Type 12) facet if $v^1$ (or $v^2$,  or both $v^1$ and $v^2$) is (is, are) in the corresponding half space.  In this way, if $v^1$ and $v^2$ lie in different half spaces we will have both Type 1 and Type 2 facets of $V_{12}$.  

The next lemma shows that in the final case of Theorem~\ref{thm-BIG} there are no Type $12$ facets of $V_{12}$.

\begin{lemma}
Suppose  $v^1$ is pushed first, $v^2$ is pushed second, and $v^2$ is also a candidate for pushing first.  Then there are no Type 12 facets of $V_{12}$.
\label{lemma-no-12-facets}
\end{lemma}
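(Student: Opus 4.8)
The plan is to exploit two generalized‑ear properties of $T$ at the same time. Since $v^1$ is pushed first, $T$ refines the subdivision $W$ of $V$ obtained by pushing $v^1$ in the trivial subdivision, so every $d$‑simplex of $T$ containing $v^1$ lies in the closure of $\conv(V)\setminus Q_1$ (the region tiled by the ear $E(W,v^1)$), where $Q_1=\conv(V-\{v^1\})$; in particular no such simplex meets $\mathrm{int}(Q_1)$. Since $v^2$ is a candidate to be pushed first, $z_2(T)=z^{\min}_2(V)$, and in the main case this value is positive, so by the lemma characterizing $z^{\min}$ the triangulation $T$ is a generalized ear subdivision of $V$ with ear point $v^2$; hence every $d$‑simplex of $T$ containing $v^2$ lies in the closure of $\conv(V)\setminus Q_2$ and meets no point of $\mathrm{int}(Q_2)$, where $Q_2=\conv(V-\{v^2\})$. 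My strategy is to assume a Type $12$ facet $F$ of $V_{12}$ exists and produce a $d$‑simplex of $T$ violating one of these two properties.

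First I would dispose of the degenerate situations. If $v^1$ is not a vertex of $V$ then $T$ omits $v^1$, and combining ``simplices of $T$ using $v^1$ avoid $\mathrm{int}(Q_1)$'' with ``simplices not using $v^1$ lie in $Q_{12}$'', together with the fact that these tile $\conv(V)$, forces $Q_1=Q_{12}$; then $v^1\in Q_{12}$ and no facet of $V_{12}$ is visible from $v^1$. If $v^2$ is not a vertex then $z^{\min}_2(V)=0$ forces $z_2(T)=0$, so $T$ omits $v^2$ and a symmetric argument gives $v^2\in Q_{12}$. The case $\dim(Q_{12})=d-1$, in which the two ``sides'' of $\mathrm{aff}(Q_{12})$ play the role of facets, is handled by the same containments. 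So from here on I would assume $\dim(Q_{12})=d$ and that $v^1,v^2$ are vertices of $V$.

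Now suppose $F$ is a Type $12$ facet of $V_{12}$, and write $Q_{12}\subseteq\{x:\langle u,x\rangle\le\alpha\}$ with $F=Q_{12}\cap\{x:\langle u,x\rangle=\alpha\}$; by hypothesis $\langle u,v^1\rangle>\alpha$ and $\langle u,v^2\rangle>\alpha$, so $v^1,v^2\notin Q_{12}$, and since $Q_{12}=\conv(V_{12})$ these are the only points of $V$ outside $Q_{12}$. Fix $p$ in the relative interior of $F$. The crux is the claim that $p\in\mathrm{int}(Q_1)\cap\mathrm{int}(Q_2)$. Indeed, if $p$ lay on $\partial Q_1=\partial\conv(Q_{12}\cup\{v^2\})$, a supporting hyperplane of $Q_1$ at $p$ would have normal $c\neq O$, and $\langle c,\cdot\rangle$ would attain its maximum over the full‑dimensional polytope $Q_{12}$ at $p$; since $p$ is in the relative interior of the facet $F$, this forces $c$ to be a positive multiple of $u$, and then $v^2\in Q_1$ gives $\langle c,v^2\rangle\le\langle c,p\rangle$, i.e.\ $\langle u,v^2\rangle\le\alpha$, a contradiction. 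Replacing $v^2$ by $v^1$ throughout gives $p\in\mathrm{int}(Q_2)$. To finish: for small $\varepsilon>0$ the point $(1-\varepsilon)p+\varepsilon v^1$ lies in $\conv(V)$ but, since $\langle u,(1-\varepsilon)p+\varepsilon v^1\rangle=\alpha+\varepsilon(\langle u,v^1\rangle-\alpha)>\alpha$, not in $Q_{12}$; hence some closed $d$‑simplex $\sigma$ of $T$ contains $(1-\varepsilon_j)p+\varepsilon_j v^1$ for a sequence $\varepsilon_j\downarrow 0$. Then $p\in\sigma$ and $\sigma\not\subseteq Q_{12}$, so $v^1$ or $v^2$ is a vertex of $\sigma$; in the first case $\sigma$ meets $\mathrm{int}(Q_1)$ at $p$, contradicting the ear property at $v^1$, and in the second case $\sigma$ meets $\mathrm{int}(Q_2)$ at $p$, contradicting the ear property at $v^2$.

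I expect the crux---converting ``$F$ visible from both $v^1$ and $v^2$'' into ``$p\in\mathrm{int}(Q_1)\cap\mathrm{int}(Q_2)$''---to be the main obstacle, since that is the one place where the ``doubly visible'' hypothesis is actually used; the rest is a pigeonhole argument on top of the already‑established generalized‑ear structure of $T$, and the degenerate cases are routine containment comparisons.
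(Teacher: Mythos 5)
Your argument is correct in the main case, and it takes a genuinely different route from the paper's. Both proofs rest on the same geometric observation — that the relative interior of a Type~$12$ facet $F$ lies in $\mathrm{int}(Q_1)\cap\mathrm{int}(Q_2)$ — but they exploit it differently. The paper uses only $\mathrm{int}(Q_2)$: pushing $v^1$ and then $v^2$ produces a piece $F\cup\{v^2\}$ in the resulting subdivision (since $F$ is a facet of $V_1-\{v^2\}=V_{12}$ visible from $v^2$), so $T$, which refines that subdivision, must contain $d$-simplices with vertex $v^2$ whose $(d-1)$-dimensional bases meet $\mathrm{int}(Q_2)$, contradicting the ear condition at $v^2$. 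You instead pick a point $p$ in the relative interior of $\conv(F)$, use a limiting/pigeonhole argument to find one closed $d$-simplex $\sigma$ of $T$ containing $p$ and escaping $Q_{12}$, note that $\sigma$ must then use $v^1$ or $v^2$, and kill each alternative with the corresponding ear condition. A pleasant byproduct is that your argument never uses the hypothesis that $v^2$ is pushed \emph{second}; it needs only that $v^1$ is pushed first and that $v^2$ is a candidate to be pushed first, so it proves a slightly stronger statement.

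Two caveats. First, your disposal of the non-vertex cases is muddled: the assertion ``simplices not using $v^1$ lie in $Q_{12}$'' is false (such simplices may use $v^2$, so they lie only in $Q_1$), and the claimed conclusion $Q_1=Q_{12}$ does not follow. These cases in fact need no separate treatment: if $v^1$ (respectively $v^2$) is not a vertex of $V$ it is absent from $T$, so the step ``$v^1$ or $v^2$ is a vertex of $\sigma$'' simply forces the other alternative, and the main argument closes the case as before. Second, and more substantively, the case $\dim(Q_{12})=d-1$ is \emph{not} ``handled by the same containments.'' If a Type~$12$ facet exists there, $v^1$ and $v^2$ lie on the same side of $\mathrm{aff}(Q_{12})$, so $Q_1=\conv(Q_{12}\cup\{v^2\})$ is a pyramid with base facet $Q_{12}$; every point $p$ of the relative interior of $Q_{12}$ lies on $\partial Q_1$, not in $\mathrm{int}(Q_1)$, and likewise for $Q_2$. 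Thus the crux claim $p\in\mathrm{int}(Q_1)\cap\mathrm{int}(Q_2)$ fails, and the supporting-hyperplane derivation — which explicitly assumes $Q_{12}$ is full-dimensional — breaks down. The case can still be salvaged by replacing ``$p\in\mathrm{int}(Q_i)$'' with the observation that $p$ is not in the ear region of $v^i$ (since $Q_{12}$ is a facet of $Q_i$ not visible from $v^i$), but that is a different geometric statement that must be argued, not an instance of the same containment.
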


\begin{proof}
A Type 12 facet defines two closed half spaces: one contains $V_{12}$ but not $v^2$ and the other contains $v^2$.  Since a Type 12 facet is seen by $v^2$, $Q_1= {\mathrm{conv}}(V_1)= {\mathrm{conv}}(V_{12} \cup \{v^2\})$ will contain all Type 12 facets in its interior.  Hence, a Type 12 facet is not visible in $V_1$ from $v^1$.  Similarly, a Type 12 facet is not visible in $V_2$ from $v^2$.  Since $v^2$ is a candidate to be pushed first and Type 12 facets are not Type 2 facets, no triangulation of $V$ will have a simplex with vertex $v^2$ and base contained in a Type 12 facet.

Now, let $W$ be the subdivision obtained by refining the trivial subdivision by pushing $v^1$.  Then $V_1 \in W$.  Let $W'$ be the subdivision obtained by refining $W$ by pushing $v^2$.  Since $v^2 \in V_1$, $W'$ will contain sets of the form $F \cup \{v^2\}$, where $F$ is a facet of $V_1$ visible from $v^2$ but does not contain  $v^2$.  These facets are precisely the Type 2 and Type 12 facets of $V_{12}$.  Any triangulation obtained by refining $W'$ will therefore contain simplices having $v^2$ as a vertex and base contained in a Type 2 or a Type 12 facet,  a contradiction.  Thus, there are no Type 12 facets of $V_{12}$.  
\end{proof}

Note that by the way we defined the Type 1 and Type 2 facets of $V_{12}$ when $\dim(Q_{12})=d-1$, the argument of Lemma~\ref{lemma-no-12-facets} will still hold.  

We now define other types of facets obtained by pushing $v^1$ and $v^2$ in various cases.  Let $H$ be a facet of $V_1$ containing  $v^2$  and visible from $v^1$.  Push the point  $v^2$ in $H$ to obtain a subdivision of $H$.  Those sets in the subdivision that contain $v^2$ (and hence pyramids with apex $v^2$) are called {\it Type} $\mathcal{F}_1$ facets.  

If $H$ is a facet of $V_1$ containing $v^2$ that is not visible from $v^1$, and we push $v^2$ as above, we similarly obtain a subdivision of $H$.   In this case, we call the sets containing $v^2$  {\it Type $\overline{ \mathcal{F}_1} $} facets.  We define $\mathcal{F}_2$ and $\overline{ \mathcal{F}_2}$ similarly.  See Figure~\ref{figure-definition-facets} for examples in ${\mathbb R}^2$.
\begin{figure}
$$ \includegraphics[scale=0.8]{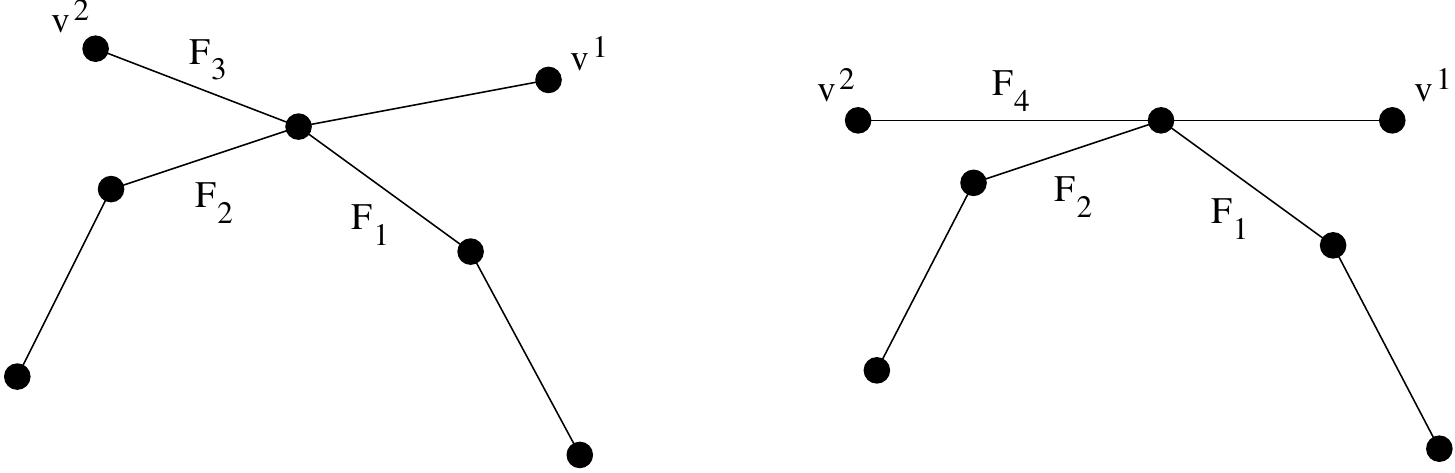}$$
\caption{Type 1, 2, $\mathcal{F}_1$, $\overline{\mathcal{F}_1}$ facets in ${\mathbb R}^2$.}
\label{figure-definition-facets}
\end{figure}
In these examples, $F_1$ is a Type 1 facet, $F_2$ is a Type 2 facet, $F_3$ is a Type $\mathcal{F}_1$ facet, $F_4$ is a Type $\overline{\mathcal{F}_1}$ facet.
 
Now, while there are no facets of $V_{12}$ visible from both $v^1$ and $v^2$, there may be a $(d-2)$-dimensional face $G$ seen by both points.  Such a $G$ is a $(d-2)$-dimensional face of the form  $F' \cap F''$, where $F'$ is a Type 1 facet and $F''$ is a Type 2 facet.    Then, in $V_1$, we have $G=F' \cap F_1$, where $F_1$ is a Type $\mathcal{F}_1$ or a Type $\overline{\mathcal{F}_1}$ facet (denoted by $F_1 \in \mathcal{F}_1 \cup \overline{\mathcal{F}_1}$).  In $V_2$, we have  $G=F'' \cap F_2$, where $F_2 \in \mathcal{F}_2 \cup \overline{\mathcal{F}_2}$.  Since $\dim({\mathrm{conv}}(G))=d-2$,  $F_1=G \cup \{v^2\}$ and  $F_2=G \cup \{v^1\}$.  We claim  $F_i \in \mathcal{F}_i$, $i=1,2$, or else $F_i \in \overline{\mathcal{F}_i}$, $i=1,2$.

\begin{lemma}
Suppose $G=F' \cap F_1$ and $G=F'' \cap F_2$ as defined above.  Then $F_1 \in \mathcal{F}_1$ and $F_2 \in \mathcal{F}_2$, or else $F_1 \in \overline{\mathcal{F}_1}$ and $F_2 \in \overline{\mathcal{F}_2}$.
\label{lemma-both-same-Type}
\end{lemma}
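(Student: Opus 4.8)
The plan is to reduce the statement to a purely two‑dimensional visibility question about the ``link'' of $G$, and then to settle that question by a short computation in the plane.

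\emph{Step 1: reduction to a visibility statement.} Since $\dim(\conv(G))=d-2$ while $G\cup\{v^1\}$ and $G\cup\{v^2\}$ have dimension $d-1$, the facet $F_1$ of $V_1$ with $F'\cap F_1=G$ is forced to be $G\cup\{v^2\}$: the set $F_1\cap V_{12}$ is a face of $V_{12}$ containing $G$, hence equals $G$, $F'$, or $F''$; it cannot be $F'$ (else $F'\subseteq F_1\cap F'=G$), and it cannot be $F''$ (else $v^2\in\mathrm{aff}(F'')$, contradicting that $F''$ is a Type~$2$ facet and so $v^2$ lies strictly off $\mathrm{aff}(F'')$). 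Likewise $F_2=G\cup\{v^1\}$. Thus $F_1$ is a pyramid with apex $v^2$ over the base $G$, and (using $\dim(\conv(G))=d-2$) it can occur in the push‑$v^2$ subdivision of a facet of $V_1$ only as a $v^2$‑piece of the facet $H_1$ of $V_1$ determined by $\mathrm{aff}(H_1)=\mathrm{aff}(G\cup\{v^2\})$ --- a facet which exists because $F_1\in\mathcal{F}_1\cup\overline{\mathcal{F}_1}$, and which contains $v^2$. Consequently $F_1\in\mathcal{F}_1$ exactly when $H_1$ is visible from $v^1$ in $V_1$, and $F_1\in\overline{\mathcal{F}_1}$ otherwise; symmetrically, writing $H_2$ for the facet of $V_2$ with $\mathrm{aff}(H_2)=\mathrm{aff}(G\cup\{v^1\})$, we have $F_2\in\mathcal{F}_2$ exactly when $H_2$ is visible from $v^2$ in $V_2$. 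So it suffices to prove that $H_1$ is visible from $v^1$ in $V_1$ if and only if $H_2$ is visible from $v^2$ in $V_2$.

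\emph{Step 2: projecting out $\mathrm{aff}(G)$.} Let $\pi:\BR^d\to\BR^2$ be the affine map whose fibres are the translates of the direction space of $\mathrm{aff}(G)$, so that $\pi$ collapses $\mathrm{aff}(G)$ to a single point $p$. Since $\mathrm{aff}(G)\cap\conv(V_{12})=G$, the polygon $P:=\pi(\conv(V_{12}))$ has $p$ as a vertex, and the two facets $F',F''$ of $V_{12}$ containing $G$ (whose affine hulls contain $\mathrm{aff}(G)$) project onto the two edges of $P$ at $p$, lying on lines $\ell',\ell''$. The point of this projection is that the supporting functional of any face that contains $G$ is constant on the fibres of $\pi$, so the visibility of such a face from $v^1$ or from $v^2$ is unchanged by applying $\pi$. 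Writing $a:=\pi(v^1)$ and $b:=\pi(v^2)$ (both distinct from $p$), we obtain $\pi(Q_1)=\conv(P\cup\{b\})$, $\pi(Q_2)=\conv(P\cup\{a\})$, while $\mathrm{aff}(H_1)$ projects onto the line $\ell_b$ through $p$ and $b$, and $\mathrm{aff}(H_2)$ onto the line $\ell_a$ through $p$ and $a$. Thus ``$H_1$ is visible from $v^1$'' becomes ``$a$ lies strictly on the side of $\ell_b$ not containing $P$'', and symmetrically for the other condition.

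\emph{Step 3: the planar computation.} Put $p$ at the origin. From ``$F'$ is Type~$1$ and $F''$ is Type~$2$'' one knows that $a$ lies strictly on the non‑$P$ side of $\ell'$ and on the closed $P$‑side of $\ell''$, while $b$ lies strictly on the non‑$P$ side of $\ell''$ and on the closed $P$‑side of $\ell'$. In particular $\ell_b$ supports $P$, so ``$H_1$ is visible from $v^1$'' says precisely that the linear functional which vanishes on $\ell_b$ and is nonpositive on $P$ is strictly positive at $a$; writing $a=(a_1,a_2)$ and $b=(b_1,b_2)$, and using the known location of $P$ to fix the sign of that functional, this is exactly the inequality $a_1b_2-a_2b_1<0$. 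The mirror‑image computation shows ``$H_2$ is visible from $v^2$'' is the very same inequality. Hence the two visibilities are equivalent, proving the lemma --- after disposing, as in the proofs of Lemmas~\ref{lemma-BIGpartII2} and~\ref{lemma-BIGpartII3}, of the degenerate cases where $v^1$ or $v^2$ is not a vertex, or $\dim(Q_{12})=d-1$, or $a\in\ell''$ or $b\in\ell'$ (in the last of which one of the two visibilities, and hence by the inequality the other as well, holds automatically).

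The step I expect to be the main obstacle is Step~2: making precise that the projection $\pi$ preserves exactly the visibilities needed --- which works because every face in play contains $G$, so its supporting functional descends to $\BR^2$ --- together with the low‑dimensional degenerate configurations noted above, which must be checked not to corrupt the final sign computation.
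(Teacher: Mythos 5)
Your proof is correct, and at the structural level it matches the paper's: both arguments reduce the lemma to a planar statement by projecting along $\mathrm{aff}(G)$ onto a $2$-dimensional complementary plane, using the fact that supporting hyperplanes of faces containing $G$ descend through that projection. Where you diverge is in the two-dimensional part. The paper proves the $d=2$ case by a direct case analysis, splitting on whether $\conv(\{v^1,v^2\})$ misses $Q_{12}$, touches only $\partial Q_{12}$, or meets $\mathrm{int}(Q_{12})$, and reading off $\mathcal F_i$ vs.\ $\overline{\mathcal F_i}$ from a picture in each case. You instead first isolate the cleaner equivalence ``$H_1$ visible from $v^1$ in $V_1$ $\iff$ $H_2$ visible from $v^2$ in $V_2$'' (your Step~1, making explicit what the paper leaves implicit when it writes $F_1=G\cup\{v^2\}$, $F_2=G\cup\{v^1\}$), and then in Step~3 observe that, with $p$ at the origin and the absence of Type~$12$ facets pinning $a$ and $b$ into the two wedges adjacent to the $P$-wedge, both visibilities reduce to the single sign condition $a_1b_2-a_2b_1<0$, i.e.\ to whether the angle from $a$ to $b$ around $p$ exceeds $\pi$. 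This replaces the paper's three pictures with one determinant and makes the symmetry between $v^1$ and $v^2$ manifest; the price is that you must check that $\ell_a$ and $\ell_b$ really do support $P$ and dispatch the boundary cases $a\in\ell''$, $b\in\ell'$, which you do note. I verified the sign computation: writing $a$ and $b$ in polar coordinates about $p$ with $P$ occupying the angular sector $[0,\theta]$ forces $a$'s angle into $[\theta-\pi,0)$ and $b$'s into $(\theta,\pi]$, and then both visibility conditions become $\phi_b-\phi_a>\pi$, i.e.\ $\sin(\phi_b-\phi_a)<0$, which is exactly $a_1b_2-a_2b_1<0$ in this orientation. One small caution: the assertion in Step~1 that ``$F_1\cap V_{12}$ is a face of $V_{12}$'' is not literally immediate (the cells of the push-$v^2$ subdivision of a facet are not faces of $V_{12}$ in general), but the conclusion $F_1=G\cup\{v^2\}$ is nonetheless correct and follows directly from $\dim(\conv G)=d-2$ together with $V_{12}\cap\mathrm{aff}(G)=G$, so this is a wording issue rather than a gap.
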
 

\begin{proof}
We consider first the case when $\dim ({\mathrm{conv}}(V))=2$.  In this case, $G$ is a point given by $G=\{q\}= H_1 \cap H_2$, where $H_i$ is a Type $i$ facet, $i=1,2$ (Figure~\ref{figure-set-up-both-same-Type}).
\begin{figure}
$$\includegraphics[scale=0.8]{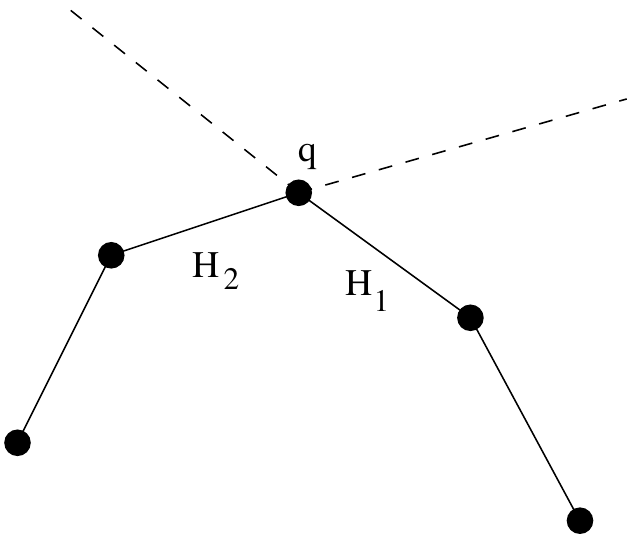}$$
\caption{The case where $\dim ({\mathrm{conv}}(V))=2$}
\label{figure-set-up-both-same-Type}
\end{figure}
Each facet $H_i$, $i=1,2$, defines two closed half spaces.  One of the half spaces will contain $V_{12}$ but not $v^i$.  Since there are no Type 12 facets, $v^1$ (respectively $v^2$)  must lie in the same closed half space defined by $H_2$ (respectively $H_1$) as $V_{12}$ does.  Note that by the definition of $G$ we have  $F_1 \cap H_1=\{q\}$, where $F_1=\{q,v^2\}$ in $V_1$ and $F_2 \cap H_2 =\{q\}$, where $F_2 =\{q,v^1\}$ in $V_2$.   There will be three cases to consider:
\begin{enumerate}
\item[(i)] ${\mathrm{conv}}(\{v^1,v^2\}) \cap Q_{12} = \emptyset$,
\item[(ii)] ${\mathrm{conv}}(\{v^1,v^2\}) \cap \partial Q_{12} \neq \emptyset$, but ${\mathrm{conv}}(\{v^1,v^2\}) \cap {\mathrm{int}}(Q_{12}) = \emptyset$, and
\item[(iii)] ${\mathrm{conv}}(\{v^1,v^2\}) \cap {\mathrm{int}}(Q_{12}) \neq \emptyset$.
\end{enumerate}

In (i) we have Figure~\ref{figure-final-case1}.  
\begin{figure}
$$\includegraphics[scale=0.8]{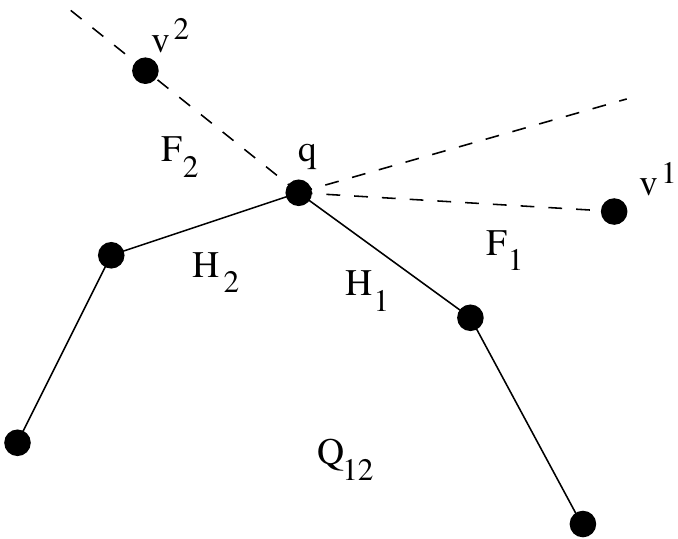}$$
\caption{Case (i), Lemma~\ref{lemma-both-same-Type}.}
\label{figure-final-case1}
\end{figure}
That is, the interior of the triangle given by $\{v^1,v^2,q\}$ does not intersect $Q_{12}$.  Thus, in this case $F_1 \in \mathcal{F}_1$ and $F_2 \in \mathcal{F}_2$.

Note that in (ii), if we have ${\mathrm{conv}}(\{v^1,v^2\}) \cap \partial Q_{12} \neq \emptyset$, but ${\mathrm{conv}}(\{v^1,v^2\}) \cap {\mathrm{int}}(Q_{12}) = \emptyset$, then $v^1, v^2$ are collinear with one or more points of $V_{12}$ in the boundary of $Q_{12}$ (Figure~\ref{figure-one-pt-collinear}).  Since $G \neq \emptyset$, there is exactly one point $q$, and we have the second diagram in Figure~\ref{figure-one-pt-collinear}.  
\begin{figure}
$$ \includegraphics[scale=0.8]{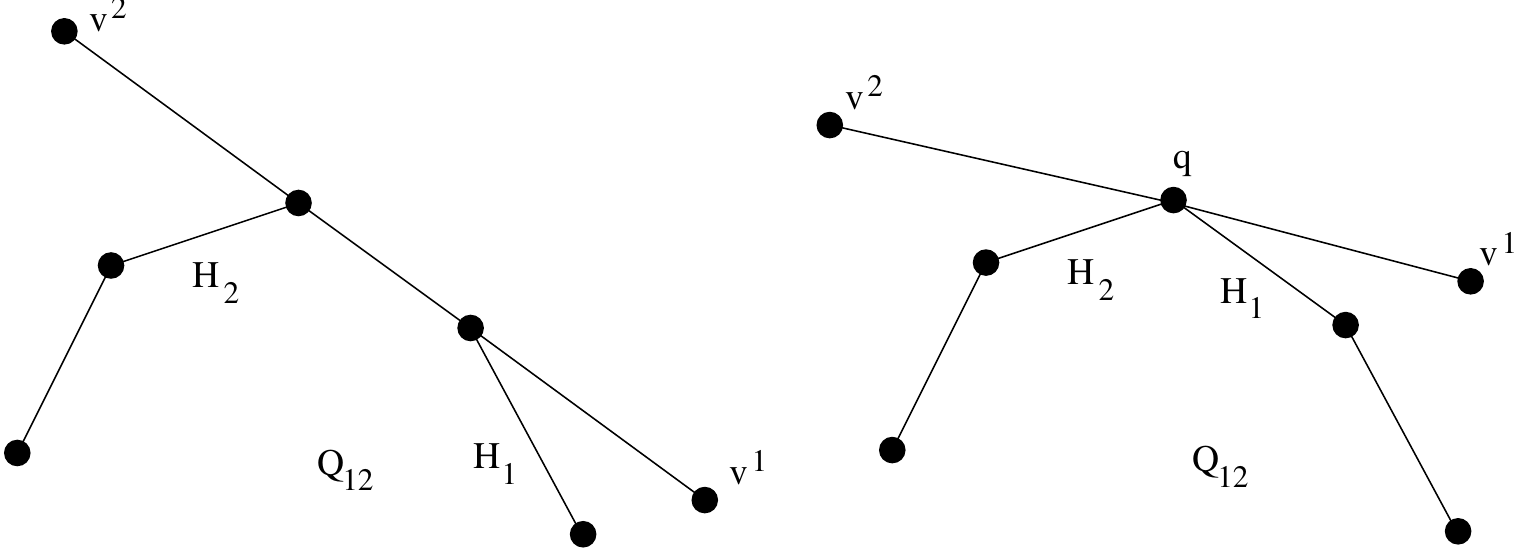}$$
\caption{Case (ii), Lemma~\ref{lemma-both-same-Type}.}
\label{figure-one-pt-collinear}
\end{figure}
In this case, any facet of $V_i$ containing $v^j$, $i \neq j$, is not visible from $v^i$, for $i,j \in \{1,2\}$.  Therefore,  if $F_i$ is a facet of $V_{i}$ containing $v^j$, $i \neq j$, then $F_i \in \overline{\mathcal{F}_i}$, for $i,j \in \{1,2\}$.

 Consider (iii). If ${\mathrm{conv}}(\{v^1,v^2\}) \cap {\mathrm{int}}(Q_{12}) \neq \emptyset$, then the interior of the triangle defined by $v^1,v^2,q$ intersects ${\mathrm{int}} (Q_{12})$ (Figure~\ref{figure-final-case3}).
\begin{figure}
$$\includegraphics[scale=0.8]{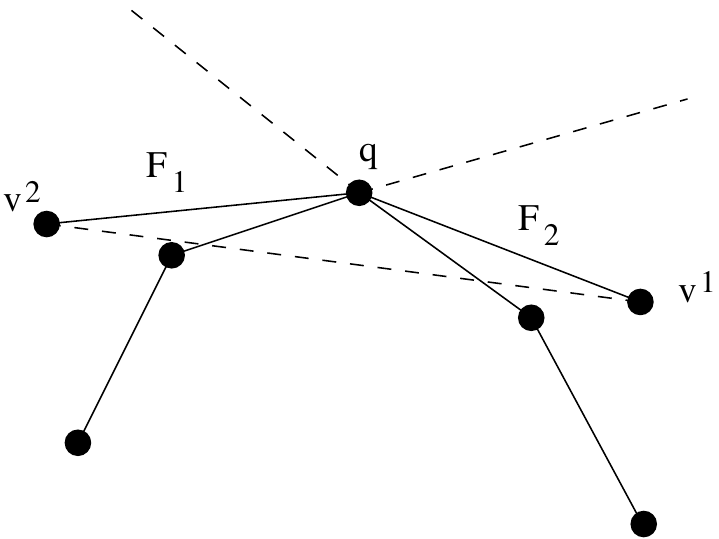}$$
\caption{Case (iii), Lemma~\ref{lemma-both-same-Type}.}
\label{figure-final-case3}
\end{figure}
In this case, the closed half space, $\mathcal{C}_i$, defined by ${\mathrm{aff}}(F_i)$ that contains $Q_{12}$, must also contain $v^i$, $i=1,2$. Since $q \in Q_{12}$, $\mathcal{C}_i$ will contain ${\mathrm{conv}}(\{q,v^i\})$, $i=1,2$.  But, $F_1= \{q,v^2\}$ and $F_2 = \{q,v^1\}$.   Thus, $F_i$ is not visible in $V_i$ from $v^i$, $i=1,2$.  Therefore $F_i \in \overline{\mathcal{F}_i}$, $i=1,2$.

These three cases prove the lemma for the case where $\dim ({\mathrm{conv}}(V))=2$.  Now consider the general case where $\dim({\mathrm{conv}}(V))=d$.  Recall that $G$ is a
$(d-2)$-dimensional face of $V_{12}$ given by $G=F' \cap F''$.  In $V_1$, we have $G=F' \cap F_1$, where $F_1 \in \mathcal{F}_1 \cup \overline{\mathcal{F}_1}$; in $V_2$, $G=F''\cap F_2$, where $F_2 \in \mathcal{F}_2\cup \overline{\mathcal{F}_2}$.

Let $\phi: {\mathbb R}^d \rightarrow {\mathbb R}^2$ be an orthogonal projection onto a plane perpendicular to ${\mathrm{aff}}(G)$ and defined by $\phi (G)=\{q\}$.  Then the affine spans of the $(d-1)$-dimensional $F_1, F_2, F', F''$ are mapped to lines in ${\mathbb R}^2$ and $\phi (V_{12})$ is a two-dimensional point set.

Since we have proved the lemma for $d=2$,  we have $\phi (F_i) \in \phi(\mathcal{F}_i)$ or  $\phi (F_i) \in \phi (\overline{\mathcal{F}_i})$, for $i=1,2$.  Since $\phi$ is an orthogonal projection, the necessary geometric properties of all sets will be preserved.  Thus, we will have $F_i \in\mathcal{F}_i$, for $i=1,2$, or $F_i \in\overline{\mathcal{F}_i}$, for $i=1,2$.  
\end{proof}

Thus, Lemma~\ref{lemma-both-same-Type} proves that if $G= F' \cap F''$ exists, then $F_1=G \cup \{v^2\}$ is an $\mathcal{F}_1$ facet if and only if $F_2 = G \cup \{v^1\}$ is an $\mathcal{F}_2$ facet.  In this case, we will have, by the definitions of the Type $\mathcal{F}_i$ facets, that $[G \cup \{v^2\}]\cup \{v^1\}$ is a pyramid of the subdivision obtained by pushing $v^1$ first and pushing $v^2$ second if and only if $[G \cup \{v^1\}]\cup \{v^2\}$ is a pyramid of the 
subdivision obtained by pushing $v^2$ first and pushing $v^1$ second. Note that since $G$ is $(d-2)$-dimensional, $G\cup \{v^1\}\cup \{v^2\}$ is a two-fold pyramid over $G$.

We are now ready to prove the last case of Theorem~\ref{thm-BIG}.

\begin{lemma}
Assume the hypotheses of Theorem~\ref{thm-BIG}.  Assume  $v^1$ is pushed first, $v^2$ is pushed second, and $v^2$ is a candidate for pushing first. Then $T$ is given by the order $v^2, v^1, v^3, \dots, v^n$, where $v^2$ is pushed first, $v^1$ is pushed  second, and $v^i$, $i \neq 1,2$, is pulled/pushed if it was pulled/pushed in the original order.
\label{lemma-BIGpartII4}
\end{lemma}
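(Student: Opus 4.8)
The plan is to follow the template of Lemmas~\ref{lemma-BIGpartII1}--\ref{lemma-BIGpartII3}: rather than comparing triangulations directly, I would show that the subdivision of $V$ obtained from the trivial subdivision by pushing $v^1$ and then $v^2$ is \emph{equal} to the one obtained by pushing $v^2$ and then $v^1$. Since the two subdivisions then get refined by pulling/pushing $v^3,\dots,v^n$ in the same induced order, the resulting triangulations coincide, and that common triangulation is $T$; hence the first two vertices may be swapped.

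First I would dispose of the cases where $v^1$ or $v^2$ fails to be a vertex of $V$ (routine, exactly as in the opening sentences of Lemmas~\ref{lemma-BIGpartII2} and~\ref{lemma-BIGpartII3}, since pushing a non-vertex simply deletes it and commutes with pushing the other point). Otherwise, using that $v^2$ is a candidate to be pushed first but not a dual candidate and that pushing $v^1$ gives a nontrivial subdivision, both $v^1$ and $v^2$ are vertices of $V$, $\dim(Q_1)=\dim(Q_2)=d$, and $v^1,v^2\notin\conv(V_{12})$; in particular $\dim(Q_{12})\in\{d-1,d\}$, and the case $\dim(Q_{12})=d-1$ is covered by the ``sides'' convention together with the remark following Lemma~\ref{lemma-no-12-facets}. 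So assume $\dim(Q_{12})=d$. The key structural point is now that, because $v^2\notin\conv(V_{12})$, every facet of $V_1$ that contains $v^2$ has the form $G\cup\{v^2\}$ with $G$ a $(d-2)$-face of $V_{12}$, hence is already a pyramid with apex $v^2$; consequently the Type~$\mathcal{F}_1$ facets are exactly those such facets that are visible from $v^1$, and $\mathcal{F}_{11}$ (the facets of $V_1$ visible from $v^1$) is the disjoint union of the Type~1 facets of $V_{12}$ and the Type~$\mathcal{F}_1$ facets.

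Next I would compute the two subdivisions. Pushing $v^1$ in the trivial subdivision gives $W=\{V_1\}\cup\{F\cup\{v^1\}:F\in\mathcal{F}_{11}\}$. Now push $v^2$ in $W$: by Lemma~\ref{lemma-no-12-facets} the facets of $V_{12}$ visible from $v^2$ are precisely the Type~2 facets, so $V_1$ is replaced by $V_{12}$ together with the sets $F\cup\{v^2\}$ for $F$ a Type~2 facet; the pyramids $F\cup\{v^1\}$ with $F$ a Type~1 facet avoid $v^2$ and are unchanged; and the pyramids $(G\cup\{v^2\})\cup\{v^1\}$ arising from Type~$\mathcal{F}_1$ facets are two-fold pyramids over $G$ having $v^2$ as an apex, hence are also unchanged. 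Thus
\[
W'=\{V_{12}\}\cup\{F\cup\{v^1\}: F\text{ Type 1}\}\cup\{F\cup\{v^2\}: F\text{ Type 2}\}\cup\{G\cup\{v^1,v^2\}: G\cup\{v^2\}\text{ a Type }\mathcal{F}_1\text{ facet}\}.
\]
Running the same computation with $v^1$ and $v^2$ interchanged (the conclusion of Lemma~\ref{lemma-no-12-facets} is a symmetric statement about $V,v^1,v^2$, so it applies again) gives the analogous description of $U'$, the push-$v^2$-then-$v^1$ subdivision, with the last family replaced by $\{G\cup\{v^1,v^2\}: G\cup\{v^1\}\text{ a Type }\mathcal{F}_2\text{ facet}\}$. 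All but the last family are visibly identical, so it remains to match the two-fold pyramids. If $G\cup\{v^2\}$ is a Type~$\mathcal{F}_1$ facet then $G$ lies on the shadow boundary of $v^2$ in $V_{12}$ and is seen by $v^1$, which forces $G=F'\cap F''$ with $F'$ a Type~1 and $F''$ a Type~2 facet of $V_{12}$ (and symmetrically for a Type~$\mathcal{F}_2$ facet); on such $G$, Lemma~\ref{lemma-both-same-Type} and the remark following it give that $G\cup\{v^2\}$ is a Type~$\mathcal{F}_1$ facet if and only if $G\cup\{v^1\}$ is a Type~$\mathcal{F}_2$ facet. Hence the two families coincide, $W'=U'$, and the lemma follows.

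I expect the main obstacle to be the bookkeeping in the middle step: verifying that the displayed description of $W'$ is complete, i.e.\ that pushing $v^2$ causes no further splitting of any set of $W$. This rests on $v^2$ already being an apex of the relevant $d$-polytopes, which in turn uses $v^2\notin\conv(V_{12})$ together with the absence of Type~12 facets. The remaining delicate point --- pinning down exactly which $(d-2)$-faces $G$ give rise to two-fold pyramids, and checking that they occur on both sides --- is precisely what Lemmas~\ref{lemma-no-12-facets} and~\ref{lemma-both-same-Type} were established to supply.
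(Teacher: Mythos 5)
Your overall strategy --- compute the subdivision obtained by pushing $v^1$ then $v^2$, compute the one obtained by pushing $v^2$ then $v^1$, and show they coincide via Lemmas~\ref{lemma-no-12-facets} and~\ref{lemma-both-same-Type} --- is the same as the paper's. But the proposal has a genuine gap in the ``key structural point,'' which the rest of the argument leans on.

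You claim that because $v^2\notin\conv(V_{12})$, every facet of $V_1$ containing $v^2$ is a pyramid with apex $v^2$ (i.e.\ of the form $G\cup\{v^2\}$ with $G$ a $(d-2)$-face of $V_{12}$), so that pushing $v^2$ in $W$ causes no further splitting of any set in $W$. This is false. If $v^2$ lies in the affine hull of a facet of $V_{12}$, a facet $F$ of $V_1$ containing $v^2$ can have $\dim(\conv(F-\{v^2\}))=d-1$. A minimal example in the plane: take $V_{12}=\{(0,0),(1,0),(1,1),(0,1)\}$ and $v^2=(2,0)$. Then $v^2\notin\conv(V_{12})$, yet $F=\{(0,0),(1,0),(2,0)\}$ is a facet of $V_1$ containing $v^2$ with $F-\{v^2\}$ still $(d-1)$-dimensional, so $F$ is not a pyramid over $v^2$. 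Consequently $\mathcal{F}_{11}$ is \emph{not} simply the disjoint union of Type~1 facets and Type~$\mathcal{F}_1$ facets; some elements of $\mathcal{F}_{11}$ contain $v^2$, are not pyramids over $v^2$, and decompose upon pushing $v^2$ into a Type~1 facet $F-\{v^2\}$ together with several Type~$\mathcal{F}_1$ facets. The paper's proof explicitly accounts for exactly this in case~(iii) of its description of $W$ (``pyramids with apex $v^1$ over facets that are the union of the points in a Type~1 and a Type~$\mathcal{F}_1$ facet''), whose members \emph{do} split when $v^2$ is pushed. Your displayed formula for $W'$ happens to be the correct end result, but your derivation of it omits this splitting, and the symmetry argument and the matching of the two-fold pyramid families both tacitly assume the splitting never occurs. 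To repair the argument you need to reinstate case~(iii): show that such a split produces, on the one hand a pyramid over a Type~1 facet (namely $(F-\{v^2\})\cup\{v^1\}$, noting that $F-\{v^2\}$ is a facet of $V_{12}$ visible from $v^1$ but, since $v^2\in\mathrm{aff}(F-\{v^2\})$, not from $v^2$), and on the other hand two-fold pyramids over $(d-2)$-faces $G$, and only then invoke Lemma~\ref{lemma-both-same-Type} to match those $G$'s on both sides.

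A second, smaller inaccuracy: you describe the $G$'s as lying on the shadow boundary of $v^2$ in $V_{12}$, but by the paper's definition the Type~$\mathcal{F}_1$ facets arise from pushing $v^2$ \emph{inside a facet} $H$ of $V_1$, so the relevant shadow boundary is that of $v^2$ in $H-\{v^2\}$ (a facet of $V_{12}$), not in all of $V_{12}$. This matters for pinning down exactly which $(d-2)$-faces contribute two-fold pyramids before you can apply Lemma~\ref{lemma-both-same-Type}.
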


\begin{proof}
Let $W$ be the subdivision obtained by pushing $v^1$.  There are four types of sets in $W$:
\begin{enumerate}
\item[(i)] pyramids with apex $v^1$ over Type 1 facets,
\item[(ii)] pyramids with apex $v^1$ over Type $\mathcal{F}_1$ facets,
\item[(iii)] pyramids with apex $v^1$ over facets that are the union of the points in a Type 1 and a Type $\mathcal{F}_1$ facet, and
\item[(iv)] $V_1=V-\{v^1\}$.
\end{enumerate}

Let $W'$ be the refinement of $W$  obtained by pushing $v^2$.  The sets in $W$ given by (i) are sets in $W'$ since they do not contain $v^2$.  The sets in (ii) are two-fold pyramids over a $(d-2)$-dimensional face $G$ of $V_{12}$.  Since pushing the apex $v^2$ gives the same subdivision of these sets, they are also in $W'$.

The sets in (iii) are pyramids with $v^2$ in the base.  Thus, subdividing these can be done by subdividing the base $H$ by pushing $v^2$.  We defined $\mathcal{F}_1$ by this subdivision of $H$.  Hence, we obtain pyramids with apex $v^1$ over Type 1 and Type $\mathcal{F}_1$ facets.  The pyramids with apex $v^1$ over Type $\mathcal{F}_1$ facets are two-fold pyramids over a $(d-2)$-dimensional $G$.

The set in (iv) contains $v^2$.  Thus, pushing $v^2$ will give us the set $V_{12}=V_1-\{v^2\}$ and pyramids with apex $v^2$ over Type 2 facets.  

Hence, we have the following four types of sets in $W'$:
\begin{enumerate}
\item[(i)] pyramids with apex $v^1$ over Type 1 facets,
\item[(ii)] pyramids with apex $v^2$ over Type 2 facets,
\item[(iii)] two-fold pyramids over  $(d-2)$-dimensional faces $G$ of $V_{12}$ as described above, and
\item[(iv)] $V_{12}=V_1-\{v^2\}$.
\end{enumerate}

Note that the only place we have used the fact  $v^2$ is a candidate to be pushed first is Lemma~\ref{lemma-no-12-facets} where we proved  there are no Type 12 facets in $V_{12}$. Hence,   by a symmetric argument, the subdivision obtained by pushing $v^2$ first and pushing $v^1$ second is $W'$.  Thus, we can switch the order of $v^1$ and $v^2$ and obtain the same triangulation $T$.  
\end{proof}

This concludes the proof of Theorem~\ref{thm-BIG}.

\section{Recovering Lexicographic Triangulations}
\label{section-lexico-recover}
 In this section, we show  we can recover any lexicographic triangulation of $V \subseteq {\mathbb R}^d$ from its GKZ-vector using a greedy process.  We begin with a few definitions.

If $T$ is a lexicographic triangulation with order $v^1, \dots,v^n$, then pulling (pushing) $v^1$ gives the subdivision $S=\{S_1, \dots, S_m\}$ where each $S_i \in S$ is either a pyramid with apex $v^1$ or $S_i$ does not contain $v^1$.  When we pull/push $v^2$, we do so by pulling/pushing $v^2$ in each $S_i$. (Recall that if $v^2 \notin S_i$, then $S_i$ is a set in the refinement.)   We obtain $T$ by subdividing each $S_i$ according to the induced order and, therefore, obtain the induced triangulation $T_i$ of $S_i$.   Since each $S_i$ is subdivided by pulling/pushing $v^2$ first, $v^2$  is a candidate to be  pulled/pushed first in each induced triangulation, $T_i$.  Hence for the induced GKZ-vectors, $\phi(T_i)$, we have $z_2(T_i)=\min\{z_2(T_i'): \text{$T_i'$ is a triangulation of $S_i$} \}$ or $z_2(T_i)=\max\{z_2(T_i'): \text{$T_i'$ is a triangulation of $S_i$} \}$.  (Recall that if $v^2 \notin S_i$, then $z_2(T_i)=0$.)  Then, $z_2(T)=\sum_{i= 1}^{m} z_2(T_i').$  Hence, we will say  a point {\it $v^k$ is a candidate to be pulled second in $T$} if $$z_k(T)=\displaystyle \sum_{i= 1}^{m} \max\{z_k(T_i'): \text{$T_i'$ is a triangulation of $S_i$} \};$$  {\it $v^k$ is a candidate to be pushed second in $T$} if $$z_k(T)=\displaystyle \sum_{i= 1}^{m} \min \{z_k(T_i'): \text{$T_i'$ is a triangulation of $S_i$} \}.$$ In general,  if $T$ is a triangulation of $V \subseteq {\mathbb R}^d$ and $S=\{S_1, \dots, S_m\}$ is a subdivision of $V$ with $T \leq S$, then {\it $v^k$ is a candidate to be pulled next in $T$} if $$z_k(T)=\sum_{i=1}^{m} \max\{z_k(T_i'): \text{$T_i'$ is a triangulation of $S_i$} \}=\sum_{i: v^k k\in S_i} {\mathrm{vol}}(S_i),$$ and {\it $v^k$ is a candidate to be pushed next in $T$} if $$z_k(T)=\displaystyle \sum_{i= 1}^{m} \min \{z_k(T_i'): \text{$T_i'$ is a triangulation of $S_i$} \}=\sum_{i=1}^m [{\mathrm{vol}}(S_i)-{\mathrm{vol}}(S_i-\{v^k\})].$$

\begin{theorem}
Suppose $V=\{v^1, \dots, v^n\}  \subseteq {\mathbb R}^d$ and $T$ is a lexicographic triangulation of $V$.  Then $T$ can be recovered from its GKZ-vector.
\label{thm-lexico-recoverable}
\end{theorem}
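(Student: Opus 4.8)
The plan is to turn the ``candidate to be pulled/pushed next'' notion into an explicit greedy procedure and show that it reconstructs $T$. First note that, being lexicographic, $T$ is regular, hence (Section~\ref{section-pre-reg}) it is the \emph{unique} triangulation with GKZ-vector $\phi(T)$, so ``recover'' is meaningful and it suffices to output $T$ itself. The procedure maintains a subdivision $S=\{S_1,\dots,S_m\}$ of $V$, initially trivial, together with a set $P\subseteq V$ of points already processed, initially empty. At each stage it searches for some $v^k\in V\setminus P$ that is a candidate to be pulled next or pushed next with respect to $S$; by the characterizations displayed just before the theorem these conditions read $z_k(T)=\sum_{i:\,v^k\in S_i}\vol(\conv(S_i))$ and $z_k(T)=\sum_i\big[\vol(\conv(S_i))-\vol(\conv(S_i-\{v^k\}))\big]$ respectively, and hence are testable purely from the known vector $\phi(T)$ and volumes of polytopes on known points. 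It then performs the indicated pull or push of $v^k$ in $S$, adds $v^k$ to $P$, and repeats; since each step is a refinement and a proper refinement is forced whenever $S$ is not yet a triangulation, the process terminates.

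The core is the invariant: \emph{after each stage there is a lexicographic order of $V$, with a pull/push assignment, that produces $T$ and whose initial segment is exactly the moves made so far, in that order}. Initially this is the hypothesis. Suppose it holds at some stage, with witnessing order $w_1,\dots,w_p,u_1,u_2,\dots$, the $w_j$ performed and producing the current $S$. A legal move always exists: by Lemma~\ref{lemma-L1} each $S_i$ carries an induced lexicographic triangulation $T_i$ (via the induced order), and a used point $w_j$ lying in $S_i$ is there either as an apex of the (iterated) pyramid $S_i$ (when $w_j$ was pushed) or as a vertex of every $d$-simplex of $T_i$ (when $w_j$ was pulled), so by Lemma~\ref{lemma-L3} or by Theorem~\ref{thm-BIG} it may be deferred, and the first ``real'' move on $S_i$ is the first $u_j$ lying in $S_i$. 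The min/max lemmas then give, for $u_1$, that $z_{u_1}(T_i)$ equals $z^{\max}_{u_1}(S_i)$ or $z^{\min}_{u_1}(S_i)$ in every $S_i$; summing over $i$ shows $u_1$ is a candidate to be pulled, resp.\ pushed, next, so the procedure never stalls.

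For maintenance, say the procedure picks $v^k\in V\setminus P$ satisfying the ``pulled next'' equation $z_k(T)=\sum_i z^{\max}_k(S_i)$ (the ``pushed next'' case is symmetric via $z^{\min}$, and a point that is both poses no problem since the argument applies to either choice). Since $z_k(T)=\sum_i z_k(T_i)$ and $z_k(T_i)\le z^{\max}_k(S_i)$ for each $i$, equality holds term by term, so in each $S_i$ containing $v^k$ the point $v^k$ lies in every $d$-simplex of $T_i$: it is a candidate to be pulled first in each $T_i$. Theorem~\ref{thm-BIG} applied inside each $S_i$ then rewrites $T_i$ as produced by an order beginning with $v^k$ pulled. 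It remains to glue these piecewise reorderings into one global order $w_1,\dots,w_p,v^k,\dots$ still producing $T$: inside each $S_i$ one must slide $v^k$ back past the $w_j$ it overtook, which is permissible because each such $w_j$ is a pyramid apex there (Lemma~\ref{lemma-L3}, pushed case) or lies in every $d$-simplex of $T_i$ and so is movable to the front (Theorem~\ref{thm-BIG}, pulled case), after which a combinatorial reassembly restores the $w_j$'s to their original order in front of $v^k$. \textbf{This gluing step --- keeping track of the iterated-pyramid structure of the pieces and justifying the commutations --- is the part I expect to require the most care}; everything else is just the min/max lemmas together with Theorem~\ref{thm-BIG}.

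Finally I would dispose of the degenerate situations directly: the base case $|V|=d+1$ (the single simplex $V$); a first move producing the trivial subdivision, which forces $V$ to be a pyramid with apex $v^k$, where one rescales $\phi(T)$ by the computable ratio of the $d$-volumes to the $(d-1)$-volumes of the corresponding bases and continues on the base $V-\{v^k\}$; a pushed non-vertex $v^k$, which is simply deleted ($z_k(T)=0$) with the process continuing on $V-\{v^k\}$; and the easy observation that a dual candidate makes $V$ a pyramid with apex $v^k$ so that either choice is harmless. Since the procedure ends at a triangulation that refines the subdivision guaranteed by the invariant, and a triangulation admits no proper refinement, that output triangulation equals $T$; thus $T$ is recovered from $\phi(T)$.
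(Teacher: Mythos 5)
Your proposal is correct and follows essentially the same route as the paper: run the same greedy procedure, maintain the invariant that the moves performed so far are a prefix of some witnessing lexicographic order for $T$, and use Theorem~\ref{thm-BIG} cell-by-cell together with the fact that already-processed points present in a cell are pyramid apexes of that cell (so Lemma~\ref{lemma-L3} lets them commute freely), which makes the gluing step you flagged unproblematic. The paper states the invariant only implicitly and handles the commutation more tersely, but the decomposition, the appeal to the min/max lemmas and Theorem~\ref{thm-BIG}, and the termination argument are the same.
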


\begin{proof}

Suppose $R=\{V\}$ is the trivial subdivision.  Clearly $T \leq R$. Since $T$ is lexicographic, there is a point $v^k$ that is a candidate to be pulled/pushed first in $T$.  By Theorem~\ref{thm-BIG}, we can actually pull/push $v^k$ first.  Without loss of generality, we assume $v^1$ is this point.  Let $R'=\{R_1, \dots, R_p\}$ be the subdivision obtained by refining the trivial subdivision by pulling/pushing $v^1$.  Each set $R_i$ is a pyramid with apex $v^1$ or it does not contain $v^1$.

Let $S=\{S_1, \dots, S_m\}$ be the lexicographic subdivision of $V$ obtained by pulling/pushing the points $v^1, \dots, v^k$ in that order.  Then $T \leq S$.
Let $T_i$ be the induced triangulation of  $S_i \in S$.  Suppose $v$ is a candidate to be pulled/pushed next in $S$.  Let $S'=\{S_i', \dots, S_t'\}$ be the subdivision obtained by pulling/pushing $v$ in $S$.  Then $S' \geq T$.  By definition, $v$ is  a candidate to be pulled/pushed first in each $T_i$.  By  Theorem~\ref{thm-BIG}, we can actually pull/push $v$ first in the order for $T_i$. Thus, $S_i$ can be refined to $T_i$ via the order whose first $k+1$ points
are $v, v^1, \dots, v^k$.  If  $v^j \in \{v^1, \dots, v^k\}$ is  present in $S_i$, then $S_i$ is  a pyramid with apex $v^j$.  Thus, we may pull/push $v^j$ at any point in the order for $T_i$.  So for each $S_i$, $T_i$ is a refinement of the subdivision obtained by pulling/pushing $v^1, \dots, v^k, v$ in that order.  Since $T$ is obtained by refining  each $S_i$, $T$ is a refinement of the lexicographic subdivision obtained by pulling/pushing $v^1, \dots, v^k,v$.  

Repeating this process for the remaining points in $V$ we obtain an order for $T$.  Consequently, we  recover $T$.

\end{proof}

We remark that to carry out our algorithm, we first need to be able to determine a candidate for pulling/pushing.  Determining these candidates involves determining maximum and minimum values for the induced GKZ-vector of each point in each set of the subdivision. We determine the maximum and minimum values by computing the volumes of convex hulls.  Algorithms for determining volumes  of convex hulls involve knowing the facets \cite{HRGZ}.  In fact, determining volumes can be done by means of a triangulation.  But, finding a triangulation involves determining the convex hull and hence the facets of a point set.

We also need to be able to carry out a subdivision, so we need to be able to perform a refinement by pulling/pushing a point in each set of the given subdivision.  Pulling/pushing a point $v^k$ will require being able to determine the facets of $V$ or $V-\{v^k\}$.

Now, at each step of the algorithm, we are calculating the volume of a pyramid or the volume of a point set with fewer points.    So, on one hand, things get a little easier at each step because we are either looking at the base of a pyramid or we are looking at smaller point sets.  But, at each step we also are computing the volume of more point sets.  It may be interesting to determine what this trade-off means and how efficiently  we can recover a lexicographic triangulation.

\end{sloppypar}





\begin{thebibliography}{[17]}


\bibitem{DRS10}
J.A. DeLoera, J. Rambau, and F. Santos, {\sl Triangulations.  Structures for Algorithms and Applications,} Springer, Berlin-Heidelberg, 2010.


\bibitem{G86}
 I.M. Gel'fand,  { General theory of hypergeometric functions,}
{\it Soviet Math. Doklady} {\bf 33} (1986), 573-577.


\bibitem{GZK90}
 I.M. Gel'fand, A.V. Zelevinski\v{\i}, and M.M. Kapranov,  {
Newton polytopes of principal A-determinants,}  {\it Soviet Math.
Doklady} {\bf 40} (1990), 278-281.

\bibitem{GZK91}
 I.M. Gel'fand, A.V. Zelevinski\v{\i}, and M.M. Kapranov,  {
Discriminants of polynomials in several variables and triangulations
of Newton polyhedra,}  {\it Leningrad Math. J.} {\bf 2} (1991),
449-505.

\bibitem{GKZ}
 I.M. Gel'fand, M.M. Kapranov, and A.V. Zelevinski\v{\i},
{\it Discriminants,
Resultants and Multidimensional Determinants,} Birkh\"{a}user, Boston,
1994.


\bibitem{HRGZ}
M. Henk, J. Richter-Gerbert, and G.M. Ziegler, { Basic properties
of convex polytopes},
{\it Handbook of Discrete and Computational Geometry}, (J.E.
Goodman and J. O'Rourke, eds.),
CRC Press LLC, Boca Raton, 1997,  chapter 13, 243--270.

\bibitem{LEE91}
 C.W. Lee,  { Regular triangulations of convex polytopes,}  
 {\it Applied Geometry and
Discrete Mathematics:  The Victor Klee Festschrift}, vol. 4, (P.   
Gritzmann and B.    Sturmfels, eds.), DIMACS Series in Discrete
Mathematics and Theoretical Computer
Science, American Mathematical Society, Providence, 1991.

\bibitem{LEE97}
C.W. Lee, { Subdivisions and triangulations of polytopes},
{\it Handbook of Discrete and Computational Geometry}, (J.E.
Goodman and J. O'Rourke, eds.), 
CRC Press LLC, Boca Raton, 1997,  chapter 14, 271--290.


\bibitem{ZIEGLER}
 G.M. Ziegler,  {\it Lectures on Polytopes,}
Springer-Verlag, New York, 1994.


\end{thebibliography}
\end{document}